\documentclass[11pt]{article}
\usepackage{xurl}
\usepackage{amssymb, amsmath, amsthm, hyperref, color, geometry, fancyhdr, mathtools, enumerate, scalerel, color, accents, fancybox, rotating, colortbl, stmaryrd,  cleveref, setspace, tikz, tikz-3dplot, graphics, comment}

  \usepackage{mathabx}
\usepackage[all,2cell]{xy}
   \UseAllTwocells
   \SilentMatrices
\usepackage[dvips]{epsfig}
\usepackage{needspace}

\normalfont\upshape

\hypersetup{hidelinks,pdftitle={Stable Cellularity in Hopfological Algebra},
 pdfauthor={You Qi}}
\newcommand{\arxiv}[1]{\href{http://arxiv.org/abs/#1}{\tt arXiv:\nolinkurl{#1}}}
\newcommand{\arXiv}[1]{\href{http://arxiv.org/abs/#1}{\tt arXiv:\nolinkurl{#1}}}

\normalfont\upshape

\theoremstyle{definition}
\newtheorem{thm}{Theorem}[section]
\newtheorem{cor}[thm]{Corollary}

\newtheorem{lem}[thm]{Lemma}
\newtheorem{rem}[thm]{Remark}
\newtheorem{prop}[thm]{Proposition}

\newtheorem{defn}[thm]{Definition}
\newtheorem{example}[thm]{Example}

\newtheorem*{thm*}{Theorem}

\numberwithin{equation}{section}

\def\udgmod{\mbox{-\underline{gmod}}} 
 
\def\dgmod{\operatorname{-{gmod}}} 
\def\mc{\mathcal}

\def\dif{\partial}

\def\Id{\mathrm{Id}}

\newcommand{\kk}{\Bbbk}
\newcommand{\Pun}{\mathcal{P}_B}
\newcommand{\E}{\mathcal E}
\newcommand{\Tcat}{\mathcal{P}(A,H)}
\newcommand{\Ccat}{\mathcal{C}(A,H)}
\newcommand{\Dc}{\mathcal{D}^c(A,H)}
\newcommand{\HomA}{\operatorname{HOM}_A}
\newcommand{\Hom}{\operatorname{Hom}}
\newcommand{\HOM}{\operatorname{HOM}}
\newcommand{\End}{\operatorname{End}}

\newcommand{\tria}{\operatorname{Tria}}
\newcommand{\thick}{\operatorname{Thick}}
\newcommand{\add}{\operatorname{Add}}
\newcommand{\Soc}{\operatorname{Soc}}

\newcommand{\K}{\operatorname{K}_0}
\newcommand{\mH}{\operatorname{H}}

\newcommand{\Cat}{\mathcal{C}}
\newcommand{\Der}{\mathcal{D}}

\newcommand{\Ecat}{\mathcal{E}_B}
\newcommand{\OH}{\mathbb{O}_H}

\newcommand{\Ke}{\mathrm{Ker}}

\title{Stable Cellularity in Hopfological Algebra} 
\author{You Qi}
\date{September 23, 2026}

\begin{document}

\maketitle\thispagestyle{empty}

\begin{abstract}
Let $H$ be a nontrivial finite-dimensional nonnegatively graded connected
Hopf algebra, with $\ell=\max\{d:H_d\ne0\}$, and let $A\ge0$ be a
locally finite $H$-module algebra with finite-dimensional split semisimple,
$H$-trivial $A_0$.  Under the gap $A_d=0$ for $0<d<\ell$, the shifted
standard cells $\{q^{-r}Ae_x:0\le r<\ell\}$ form a simple-minded collection.
A socle estimate for positive syzygies proves negative-Hom vanishing,
while the gap makes the degree-zero endomorphism algebra semisimple.
A bounded $t$-structure on the compact derived
category with these cells is constructed, with standard cells as its simple heart objects.  It is also shown
that every compact object has a finite-cell representative and that
compact $K_0$ is free on $[Ae_x]$ over $\mathbb O_H=K_0(H\udgmod)$.
The same connected-Hopf hypotheses give a Keller--Nicol\'as weight
structure on the large derived category, without a claim of boundedness
or preservation of compact objects.  Neither a silting generator nor a
bounded weight structure on compacts is forced by the gap.
These arguments require neither cocommutativity nor finite representation
type of $H$.  The $p$-DG case, with $\mathrm{deg}(\partial)=2$ and $\ell=2p-2$,
is treated as a specialization.
Separately, under the stronger gap condition
$A_1=\cdots=A_\ell=0$, literal cellularity of every finitely generated
graded-projective hopfological module holds.  Explicit $p$-DG higher-cycle
retracts and algebra-valued traces show why additional grading hypotheses
are needed; in particular, dropping the gap can produce nonzero
$p$-torsion in the Grothendieck group of compact derived categories.
\end{abstract}

\tableofcontents

\section{Introduction}
\label{sec:introduction}

\subsection{The problem}
Hopfological algebra, introduced by Khovanov~\cite{Hopforoots} and developed
systematically in~\cite{QYHopf}, replaces the differential of ordinary
homological algebra by the action of a finite-dimensional Hopf algebra $H$
on an $H$-module algebra $A$.  For the smash product $B=A\#H$, one first
forms the homotopy category by killing relative projective-injective
modules, and then localizes at maps whose restrictions are stable
isomorphisms of $H$-modules.

Throughout the paper, we retain the connected
positively graded setup: $H=\oplus_{d\ge 0 }H_d$ is finite-dimensional, $H_0=\kk$,
and $H\ne\kk$; the algebra $A=\oplus_{n\ge0}A_n$ is locally finite,
$A_0$ is finite-dimensional split semisimple, and $H$ acts on $A_0$ through
the counit. Choose primitive idempotents $e_x$ representing the simple
$A_0$-modules and put $P_x=Ae_x$. Set $B=A \# H$ to be the smash product algebra, and let $\Pun$ denote the category of $B$-modules
admitting a finite $B$-filtration with subquotients $q^nP_x$.
No ordering of the cell degrees is imposed.  Write $\Tcat$ for its strictly
full image in the compact derived category $\Dc$.

Finite-cell modules are compact and cofibrant, and every compact object is
a derived retract of a finite-cell object~\cite[Section~7]{QYHopf}.
There are nevertheless two distinct issues.  A strict retract need not
itself admit a cell filtration; more importantly, it need not even admit a
finite-cell representative in the derived category.  The polynomial-ring
example of Section~\ref{sec:koszul} exhibits the first failure but is
stably cellular.  The finite-dimensional examples of
Section~\ref{sec:K0} exhibit the second failure. Thus, neither strict
nor stable cellularity follows from positivity alone.

\subsection{The connected-Hopf theorem and a local extension}
Our principal categorical statements are for arbitrary connected positively graded
$H$, not only for $p$-DG algebras.  
%Unlike the inclusive gap below, the endpoint condition $A_\ell=0$ is not needed.
There are two distinct structures: a bounded $t$-structure on the compact
derived category and a weight structure on the large derived category.
The latter is not asserted to restrict to a bounded weight structure on
compacts.

\begin{thm*}[\ref{thm:window-tstructure} and \ref{thm:K0}]
Assume $H$ satisfies the connected positive hypotheses as above, and let $\ell=\max\{d:H_d\ne0\}$. Suppose
$A_d=0$ for $0<d<\ell$.  Then
$\mathcal{S}=\{q^{-r}P_x:x\in X,\ 0\le r<\ell\}$ is simple-minded and
\[
 \tria(\mathcal S)=\thick(\mathcal S)=\Dc=\Tcat.
\]
The category $\Dc$ has a bounded $t$-structure with a finite-length heart
whose simple objects are the $q^{-r}P_x$ in $\mathcal{S}$.  In particular,
$\Tcat$ is idempotent complete and
$\K(\Dc)\cong\oplus_x\mathbb O_H[P_x]$.
\end{thm*}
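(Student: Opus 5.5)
The plan is to follow the route sketched in the abstract: first show that $\mathcal S$ is simple-minded, then get generation from the compact-retract theorem of \cite[Section~7]{QYHopf}, and finally build the $t$-structure from $\mathcal S$ by the standard correspondence for simple-minded collections.

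\emph{Simple-mindedness.} We need three facts about morphisms in $\Dc$ between objects of $\mathcal S$.
\begin{enumerate}[(i)]
\item Negative Homs vanish: $\Hom(q^{-r}P_x,[n]\,q^{-s}P_y)=0$ for $n<0$. Here $[n]$ is the hopfological shift, i.e.\ tensoring with a syzygy of the trivial module.
\item $\Hom(q^{-r}P_x,q^{-s}P_y)=\delta_{xy}\delta_{rs}\,\kk$.
\end{enumerate}
For (i), compute these Hom spaces via $B$-module maps out of the cofibrant object $P_x\otimes H$ modulo those factoring through projective-injectives. Equivalently, compute the $H$-stable hom of $e_xA e_y$ twisted by a positive syzygy $\Omega^n\kk$. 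The key input is a socle estimate: $\Omega^{n}\kk$ for $n>0$ has socle concentrated in $q$-degrees at least $\ell$, or otherwise outside the window. Together with $A\ge 0$, this forces every such map to factor through a free $H$-module, so it vanishes stably.

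For (ii), the degree-zero maps between windowed shifts only see $A_0$ together with the components $A_d$ for $0<d<\ell$ of $e_xAe_y$. The gap kills these components, so only $A_0$ survives. Since $A_0$ is split semisimple and $H$-trivial, we get exactly $\delta_{xy}\delta_{rs}\kk$, and the endomorphism algebra is semisimple.

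\emph{Generation.} Next show that every $q^nP_x$ lies in $\thick(\mathcal S)$, in fact in $\tria(\mathcal S)$. The exact triangles coming from the $H$-module structure of $H$ itself (the graded pieces $H_0,\dots,H_\ell$) express $q^{-\ell}P_x$ in terms of $q^{-r}P_x$ for $0\le r<\ell$ and shifts; note $[1]$ involves $q$-shifts by up to $\ell$. By induction on $n$ in both directions, this puts every cell in $\tria(\mathcal S)$, so $\Tcat\subseteq\tria(\mathcal S)$. Since every compact object is a retract of a finite-cell one, $\Dc=\thick(\mathcal S)$.

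\emph{$t$-structure and consequences.} With $\mathcal S$ simple-minded and classically generating a Hom-finite (locally finite) triangulated category, the Koenig--Yang/Al-Nofayee correspondence gives a bounded $t$-structure on $\Dc$. Its heart is the extension closure of $\mathcal S$, a finite-length category whose simples are exactly the elements of $\mathcal S$. Every object of the heart is an iterated extension of cells, and every object of $\Dc$ is a finite iterated extension of shifted hearts. Hence $\tria(\mathcal S)=\Dc$, and so $\Tcat=\Dc$, which is idempotent complete.

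\emph{$K_0$.} A bounded $t$-structure with finite-length heart gives $\K(\Dc)\cong\K(\text{heart})$, which is free over $\mathbb Z$ on $\mathcal S$. The relation $[q^{-\ell}P]=\dots$ and the shifts assemble the $\mathbb Z$-basis $\{q^{-r}[P_x]\}_{0\le r<\ell}$ into a free $\mathbb O_H$-module on the $[P_x]$. This uses that $\mathbb O_H$ is free over $\mathbb Z[q^{\pm1}]$-quotient with basis $1,q,\dots,q^{\ell-1}$, which I would verify from the connected grading.

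The main obstacle is the socle estimate in (i). One has to control $\Omega^n\kk$ for an arbitrary, possibly non-cocommutative, connected graded $H$ and show that its socle avoids the degree window against $A\ge0$. I would first try induction on $n$, using that $\Omega\kk$ is the augmentation ideal with socle $H_\ell$ in top degree.
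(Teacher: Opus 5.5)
Your proposal is correct. Its overall structure matches the paper: the socle bound on syzygies of $\kk$ gives negative vanishing, the gap gives semisimple degree-zero morphisms, the simple-minded subcategory theorem gives the bounded $t$-structure and $\tria(\mathcal S)=\thick(\mathcal S)$, and the heart gives $K_0$. The generation step, however, is genuinely different.

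\textbf{Generation.} The paper gets $\thick(\mathcal S)=\Dc$ from Yamaura's tilting theorem for $H\udgmod$. This yields $\thick(E)=\Dc$ for $E=\bigoplus_{x,i}P_x\otimes V_i$, and the paper then filters each $P_x\otimes V_i$ by window cells. You instead use the contractible modules $q^mP_x\otimes H$ with their $A$-split degree filtrations $q^mP_x\otimes H_{\ge d}$. Since $\dim H_0=\dim H_\ell=1$, $q^mP_x$ is a shift of an iterated extension of $q^{m+1}P_x,\dots,q^{m+\ell}P_x$. Likewise $q^{m+\ell}P_x$ is a shift of an iterated extension of $q^{m}P_x,\dots,q^{m+\ell-1}P_x$. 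Two-sided induction from the window then puts every $q^nP_x$ in $\tria(\mathcal S)$.

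Your argument is self-contained and needs no tilting theory. It actually proves $\tria(\mathcal S)=\Tcat$ with no gap hypothesis, so the gap is used only for simple-mindedness. The paper's route buys the link with Yamaura's family $E$, which it reuses in the silting and weight discussion.

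\textbf{Points to make explicit.}
\begin{enumerate}
\item The conclusion $\Tcat=\Dc$ needs the inclusion $\tria(\mathcal S)\subseteq\Tcat$, i.e.\ that $\Tcat$ is triangulated (Lemma~\ref{lem:T-triangulated}). The inclusion $\Tcat\subseteq\tria(\mathcal S)$ from your induction is the wrong direction for this.
\item In (i), the claim that the socle bound plus $A\ge0$ suffices is true, but it needs an argument when $H$ acts nontrivially on $A_+$. Filter $e_xAe_y$ by degree; the layers are $H$-trivial, so the invariants of $e_xAe_y\otimes W_n$ lie in degrees $\ge\ell>s-r$. The paper avoids this by using the gap to force the $A$-coefficient into $A_0$. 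Also, the maps to compute are out of the cofibrant cell $q^{-r}P_x$ itself, not out of $P_x\otimes H$.
\item The $\mathbb Z$-basis $\{q^{-r}\}_{0\le r<\ell}$ of $\mathbb O_H$ requires $\dim_qH$ to have leading coefficient $1$. That is $\dim H_\ell=1$ from Lemma~\ref{lem:socle}; connectedness alone only gives the constant term.
\end{enumerate}
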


The proof refines Yamaura's thick generator \cite{Yamaura} to a finite window of standard
cells.  The gap in $A$ eliminates morphisms between distinct window cells in
degree zero.  The socles of positive syzygies of $\kk$ as an $H$-module occur only in
degrees at least $\ell$, which gives the negative-Hom vanishing.
The theory of simple-minded subcategory (see \cite[Proposition 5.4]{KY14} or \cite[Theorem~4.4]{SchSimple})
then gives both the bounded $t$-structure and equality of triangulated
and thick closures.  This does not presuppose stable cellularity.
The same finite collection compactly generates $\Der(A,H)$, so the
compact-object theorem of Keller--Nicol\'as supplies a weight structure
there under exactly the same hypotheses
(Theorem~\ref{thm:connected-large-weight}).  Its two halves are described
by the vanishing of $\Hom(S,-[n])$, where
$S=\oplus_{x,r}q^{-r}P_x$.  Neither this construction nor the bounded
$t$-structure uses a $p$-differential, a Jordan decomposition, or a
periodicity formula for syzygies.

There is also an independent, stronger module-theoretic statement under
the inclusive gap.  Its hypothesis compares the positive degrees of
$A$ with the entire support of $H$:
\[
 A_1=A_2=\cdots=A_\ell=0.
\]
In this generality the triviality of the $H$-action on $A_0$ remains an
explicit hypothesis.

\begin{thm*}[\ref{thm:gap-local} and \ref{thm:K0}]
Under the connected positive hypotheses above, suppose
$A_d=0$ for $0<d\leq \ell$. Every
$A\#H$-module that is finitely generated graded-projective over $A$ belongs
to $\Pun$.  In particular, $\Pun$ is closed under strict direct summands.
If each $A_n$ is finite-dimensional, then
\[
 \Tcat=\Dc,
 \qquad
 \K(\Dc)\cong\bigoplus_x\mathbb O_H[P_x],
 \qquad \mathbb O_H=\K(H\udgmod).
\]
\end{thm*}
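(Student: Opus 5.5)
The plan is to attack the module-theoretic statement directly, by a filtration argument on a finitely generated graded-projective $A\#H$-module $M$; the derived-category consequences then follow formally.

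\textbf{Step 1: the degree-zero generating space.} Since $A$ is nonnegatively graded with $A_0$ split semisimple, and $M$ is finitely generated graded-projective over $A$, we have
\[
M\cong A\otimes_{A_0}V,\qquad V=M/A_{>0}M ,
\]
where $V$ is a finite-dimensional graded $A_0$-module. Because $H$ is connected positively graded and acts trivially on $A_0$, the subspace $A_{>0}M$ is $H$-stable. Hence $V$ inherits an $H$-action. Let $n$ be the lowest degree in which $M$ is nonzero, and let $N=M_n$. Then $N\cong A_0\otimes_{A_0}V_n$ is an $A_0$-module, and it is also $H$-stable up to higher degrees.

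\textbf{Step 2: the key graded submodule.} This is where the inclusive gap enters. Let $h\in H_d$ with $1\le d\le\ell$, and take $m\in M_n$. Then $hm$ lies in degree $n+d$. The component of $M$ in degree $n+d$ is generated by $A_{n+d-k}\otimes V_k$. The contributions with $0<n+d-k\le\ell$ vanish because $A_1=\cdots=A_\ell=0$. Consider the socle of $M_n$ under the $H_{>0}$-action, namely
\[
\{\,m\in M_n : H_{>0}\,m\subset A_{>0}M\,\}.
\]
Better still, take a simple $A_0$-summand $e_xV_n$ annihilated by $H_{>0}$ modulo $A_{>0}M$; such a summand exists by nilpotency of $H_{>0}$ on the finite-dimensional $V$. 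Lift it to $m\in e_xM_n$. The gap should force $H_{>0}m=0$ exactly, not merely modulo $A_{>0}M$.

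To see this, compare degrees. The element $hm$ has degree at most $n+\ell$. The elements of $A_{>0}M$ in degree $n+d\le n+\ell$ must come from $A_{\ge \ell+1}M_{\ge n}$, which lives in degrees at least $n+\ell+1$. So $(A_{>0}M)_{n+d}=0$ for $d\le\ell$, and therefore $hm=0$. Consequently $Am\cong q^{n}P_x$ is a $B$-submodule. It is a direct summand over $A$ because $m$ is part of a minimal generating set.

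\textbf{Step 3: induction.} Pass to the quotient $M/Am$. It is again finitely generated graded-projective over $A$, with smaller $\dim V$, and it still carries the $H$-action. Induction on $\dim V$ then yields a finite cell filtration, so $M\in\Pun$.

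\textbf{Step 4: the consequences.} Closure of $\Pun$ under strict summands follows immediately, since a summand is again graded-projective over $A$. For $\Tcat=\Dc$, every compact object is a derived retract of a finite-cell object~\cite[Section~7]{QYHopf}. The plan is to realize that retract by a strict idempotent on a cofibrant finite-cell module. Using local finiteness and finite-dimensionality of each $A_n$, an idempotent in the stable endomorphism ring should lift to a genuine idempotent of $\End_B$ modulo maps factoring through relative projectives, by Fitting/idempotent lifting on the finite-dimensional degreewise pieces. Its image is graded-projective over $A$, hence lies in $\Pun$ by Steps 1--3.

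\textbf{Step 5: $K_0$.} The cell filtrations give surjectivity of $\oplus_x\mathbb O_H[P_x]\to\K(\Dc)$. Freeness is then detected by a pairing with the simple $A_0$-modules: the functor $A_0\otimes^{\mathbb L}_A-$ lands in $\K(H\udgmod)\otimes\K(A_0)$.

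I expect the main obstacle to be the idempotent-lifting step in Step 4. Stable endomorphism rings are quotients of $\End_B$, and the lifting needs completeness, which is where finite-dimensionality of each $A_n$ is used. I would first try the Krull--Schmidt property of the finite-dimensional degree-truncated endomorphism algebras.
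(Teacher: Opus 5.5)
\textbf{Step~2 has a genuine gap.} You need a nonzero $\bar m\in e_xV_n$ in the \emph{lowest} degree $n$ with $H_{>0}\bar m=0$ in $V=M/A_{>0}M$. You justify its existence by nilpotency of $H_{>0}$ on $V$, but nilpotency only gives nonzero $H$-invariants somewhere in $V$, typically in its top degrees, not in $V_n$. You correctly show $(A_{>0}M)_j=0$ for $n\le j\le n+\ell$. Hence your set $\{m\in M_n: H_{>0}m\subset A_{>0}M\}$ is just $M_n^H$. This is already zero for the finite-cell module $M=q^n(P_x\otimes H)=q^nBe_x$: there $M_n=\kk(e_x\otimes 1)$, and $H_{>0}(e_x\otimes1)=e_x\otimes H_{>0}\ne0$. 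The invariants of $H\cdot M_n$ are $e_x\otimes\kk\Lambda$, which sit in degree $n+\ell$. The same happens for any lifted $p$-DG chain of length at least two. So your induction cannot even start on the simplest cells.

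The missing idea is not to insist on a rank-one cell generated in degree $n$. Instead, use the whole $H$-submodule $W=H\cdot M_n$. It is supported in degrees $n,\dots,n+\ell$, so the inclusive gap gives $W\cap A_{>0}M=0$. Hence $AW\cong A\otimes_{A_0}W$ is an $A$-split $B$-submodule with finitely generated graded-projective quotient. It is finite-cell because $W$ has a composition series with factors $q^jL_y$, and $A\otimes_{A_0}q^jL_y\cong q^jP_y$. This is exactly the paper's argument.

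Alternatively, you can keep your rank-one peeling if you choose $m$ to be a nonzero $H$-invariant of $e_xW$, which lies in some degree in $[n,n+\ell]$. Then $H_{>0}m=0$ holds by choice. Moreover $\bar m\ne0$ precisely because $W\cap A_{>0}M=0$, and this is where the endpoint $A_\ell=0$ of the gap genuinely enters.

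The remaining steps follow the paper. In Step~4 no completeness or degree truncation is needed. A finite-cell $G$ is a finite sum of shifted $P_x$ over $A$, so $\End^0_A(G)\supseteq\End^0_B(G)$ is itself finite-dimensional when each $A_n$ is. Any genuine representative $a$ of the derived idempotent is therefore algebraic. The polynomial (Fitting) decomposition $G=G_{\mathrm{nil}}\oplus G_{\mathrm{inv}}$ is then a strict $B$-decomposition on which $[a]=0\oplus\Id$. So $G_{\mathrm{inv}}$ splits the idempotent and lies in $\Pun$ by the module statement. Your Step~5 is the paper's rank character $\rho_x(M)=\dim_q(e_x\bar M)$ modulo $\dim_qH$, which provides a left inverse to $\pi$.
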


Under this gap assumption on $A$, we obtain literal cellularity, not just stable cellularity.
Neither commutativity of $A$ nor cocommutativity of $H$ is needed.
For $H=\kk[\partial]/(\partial^p)$ with $\mathrm{deg}(\partial)=2$, the top degree is
$\ell=2p-2$.  The connected theorem gives the derived conclusion under
\[
 A_i=0\qquad(0<i<2p-2),
\]
without requiring $A_{2p-2}=0$ and without separating the prime $2$.
Section~\ref{sec:gap-pdg-weaker} retains the independent lifted-chain
proof of literal cellularity for odd $p$.  In characteristic two,
literal cellularity of all graded-projective modules can fail even
though every compact object has a finite-cell representative.
Example \ref{ex:weak-endpoint-acyclic} explains the distinction.  No optimality
of the improved bound or open-gap assertion for every local Hopf
algebra with $H_0\ne\kk$ is claimed.

\subsection{Why an extra hypothesis is needed}
For $2r\le p$, the ordinary commutative $p$-DG algebra
\[
 A=\kk[x,y]/(x^2,y^2),\qquad \partial_A=0,\qquad \mathrm{deg}(x)=\mathrm{deg}(y)=r, \qquad \mathrm{deg}(\dif_A)=2.
\]
admits a rank-$r$ compact cofibrant module $T_r$ that is a strict retract
of a $4r$-cell module.  Its differential satisfies
\[
 \partial_{T_r}^{\,r}=xy\,I_r,\qquad
 \partial_{T_r}^{\,2r}=0,\qquad
 \operatorname{Tr}_{A}(\partial_{T_r}^{\,r})=rxy\ne0.
\]
We prove that the higher trace descends to compact $K_0$ and vanishes on
all finite-cell objects.  It follows that $T_r$ has no finite-cell
representative and $[T_r]\notin\mathbb O_p[A]$.  The case $r=1$ works
in every characteristic $p$, including $2$; Section~\ref{sec:K0-torsion}
proves that $[T_1]-[A^{(1)}]$ has exact additive order $p$ and square
zero in the compact Grothendieck ring.  The choices $(r,p)=(3,7)$ and
$(4,11)$ show,
respectively, that $A_1=A_2=0$ and even grading alone are insufficient.
Allowing $p$ to grow rules out any fixed initial gap independent of $p$.

These examples also identify the problem with the quotient-filtration
argument in~\cite[Lemma~2.14]{EQ1}: images of cells under a projection need
not be projective cells.  The new proof constructs a different filtration.
The half-integer regrading of the characteristic-two example is discussed
in Remark~\ref{rem:gap-half}; it concerns an extension of Schn\"urer's
integer-graded theorem~\cite{SchPos}, not a counterexample to that theorem.

\subsection{The proofs and the Grothendieck group}
The main connected open-gap argument in Section~\ref{sec:yamaura}
first constructs a bounded $t$-structure using the finite window
$\mathcal S$, and its simple-minded theorem identifies
$\tria(\mathcal S)$ with its thick closure.  Since the finite-cell image
is triangulated and contains $\mathcal S$, it already contains every
compact object.  This proves derived cellularity without constructing
a filtration on an arbitrary graded-projective module.

Write $A_+=\oplus_{n>0}A_n$ for the Jacobson radical of $A$.  For the separate inclusive-gap theorem, let $A_1=\cdots=A_\ell=0$.
For a finitely generated graded-projective hopfological module
$M$, let $b$ be its lowest nonzero degree and set $W=H\cdot M_b$.  Then
\[
 W\subseteq M_{[b,b+\ell]},\qquad
 (A_+M)_{\le b+\ell}=0.
\]
Thus $W$ injects into $\bar M=M/(A_+M)$.  Semisimplicity and graded
Nakayama show that multiplication embeds $A\otimes_{A_0}W$ in $M$ with
graded-projective quotient.  Locality of $H$ refines $W$ into shifted
simple $A_0$-modules with trivial $H$-action.  Induction on
$\dim_\kk\bar M$ supplies the required cells.  Local finiteness then
allows an algebraic Fitting decomposition of a representative of a
homotopy idempotent, proving derived summand closure.

Reduction to $A_0$ proves that the classes $[P_x]$ are always independent
over $\mathbb O_H$ and span a canonical direct summand of compact $K_0$.
Either sufficient gap makes this summand the whole group.  In the
connected case the bounded heart gives a direct proof, with integral
basis $[q^{-r}P_x]$ for $0\le r<\ell$.  Without the
gap the higher-trace examples give nonzero classes in the kernel of
reduction.  Thomason's classification~\cite{Thomason} makes the connection
precise: for the dense triangulated subcategory $\Tcat\subseteq\Dc$,
cell-class generation of $K_0$ is equivalent to $\Tcat=\Dc$.

\subsection{Organization}
Section~\ref{sec:prelim} recalls the hopfological setup and proves the
finite-cofibrant-model statement without assuming summand closure for
$\Pun$.  Section~\ref{sec:koszul} distinguishes between strict and stable
cellularity.  Section~\ref{sec:yamaura} retains Yamaura's generators,
constructs the simple-minded window and bounded $t$-structure for
connected $H$, and proves the large derived category weight theorem in the same
generality.  A general obstruction criterion separates silting and
bounded weights on compacts from these results.  All $p$-DG computations
in that section are collected afterward as examples, including a
polynomial counterexample to bounded weights on compacts.
Section~\ref{sec:hopf-gap} gives a more elementary proof under a stronger
gap assumption.  Its $p$-DG subsection
uses Section~\ref{sec:yamaura} for derived cellularity in every prime
and keeps the stronger odd-prime filtration argument separately.
Section~\ref{sec:K0} first proves the integral and $\mathbb O_H$-bases
from the connected-Hopf bounded heart, then treats the rank character
and its extension to local $H$. It also gives an obstruction to idempotent completeness in terms of the higher-traces.  It ends with the $p$-DG torsion example
without a gap.

\paragraph{Acknowledgements.} Y.~Q. is partially supported by the Simons
Foundation and the National Science Foundation (DMS-2401376). ChatGPT and Claude
assisted with manuscript editing.

\section{Preliminaries in hopfological algebra}
\label{sec:prelim}
%%%%%%%%%%%%%%%%%%%%%%%%%%%%%%%%%%%%%%%%%%%%%%%%%%%%%%%%%%%%%%%%%%%%%%%%%%%%%%%%%%%%%%%
%%%%%%%%%%%%%%%%%%%%%%%%%%%%%%%%%%%%%%%%%%%%%%%%%%%%%%%%%%%%%%%%%%%%%%%%%%%%%%%%%%%%

\subsection{Positive algebras}
Fix $\kk$ to be a ground field. Unadorned tensor $\otimes$ in this paper will always stand for $\otimes_\kk$.

Let
$H=\oplus_{i\ge 0}H_i$
be a finite-dimensional graded Hopf algebra with $H_0=\kk$, with counit $\varepsilon$. Such a Hopf algebra is graded Frobenius, and it has a unique \emph{integral element} $\Lambda$ up to a nonzero scalar.   We will assume $H\ne\kk$ and set
\begin{equation}\label{eq:ell}
\ell:=\max\{i:H_i\ne0\} = \mathrm{deg}(\Lambda) \ge 1.
\end{equation}
Set $H_{+}:=\oplus_{r>0}H_r$ and $H_{>i}:=\oplus_{r>i}H_r$, which are two-sided ideals in $H$ .

Let $A=\oplus_{i\ge0}A_i$
be a locally finite, nonnegatively graded left $H$-module algebra.  We assume that $A_0$ is finite-dimensional, split semisimple, and that $H$ acts on $A_0$ through $\varepsilon$ (i.e., $H$ acts on $A_0$ trivially).
Throughout, the Jacobson radical is denoted by
\begin{equation}\label{eq:positive-ideal}
A_+=\bigoplus_{n>0}A_n.
\end{equation}
It is a homogeneous $H$-stable two-sided ideal, and $A/A_+=A_0$.  The \emph{smash product algebra}
$B=A\#H$ is the $\Bbbk$-vector
space $A\otimes H$ with the multiplication:
\begin{equation}
(a\otimes h)(b\otimes l)=\sum_{h} a (h_{(1)}\cdot b)\otimes h_{(2)}l .
\end{equation}
Here ``$\cdot$'' denotes the left $H$ action of $h_{(1)}$ on $b$.

One key property concerning the category of graded $B\dgmod$ and $H\dgmod$ is that there is an action
\begin{equation}\label{eq:H-action}
    B\dgmod \times H\dgmod \to B\dgmod, \qquad (M,V)\mapsto M\otimes V.
\end{equation}
Here $H$ acts on $M\otimes V$ via the comultiplication, and $A$ acts only on the $M$ tensor factor.
We refer the reader to standard textbooks such as~\cite{Mo} for more details.

For the ease of exposition, we assume, in addition, that $A_0$ is \emph{basic}, i.e.\ $A_0\cong\kk^{\,|X|}$ for a finite indexing set $X$; the general split semisimple case reduces to this one by graded Morita equivalence, which changes none of the statements in this work.  Thus we may choose pairwise orthogonal primitive idempotents $\{e_x\}_{x\in X}\subset A_0$ with
\begin{equation}\label{eq:idempotents}
\sum_{x\in X}e_x=1,
\qquad
e_yA_0e_x=\delta_{xy}\kk .
\end{equation}
We put
\begin{equation}\label{eqn:cell}
P_x:=Ae_x,\qquad L_x:=(A/A_+)e_x ,
\end{equation}
and refer to these modules as (hopfological) \emph{cell modules} and \emph{simple modules} respectively.
We normalize grading shifts by $(q^nM)_i:=M_{i-n}$, so that $q^dP_x$ is generated in degree $d$. 

Given two graded $A$-modules $M$, $N$, the morphism space $\Hom_A^0(M,N)$ consists of homogeneous $A$-linear maps from $M$ to $N$:
\[
\Hom_A^0(M,N):=\left\lbrace f:M\to N\middle| f(M^i)\subseteq N^i\right\rbrace.
\] 
Writing $\Hom^i_A (M,N):=\Hom^0_A(M,q^{-i}N)=\left\lbrace f:M\to N\middle| f(M^j)\subseteq N^{i+j}\right\rbrace$, we set the graded hom space to be
\[
\HOM_A(M,N):=\bigoplus_{i\in \mathbb{Z}} \Hom^i_A (M,N).
\]
The space $\HOM_A(M,N)$ is an $H$-module, where the $H$-action is defined in \cite[Definition 5.1]{QYHopf}. The space of invariants under this action coincides with the space of graded $B$-module homomorphisms between $M$ and $N$ (\cite[Lemma 5.2]{QYHopf}):
\begin{equation}\label{eq:H-invariants-in-HOM}
    \Hom_B(M,N)=\HOM_H(\Bbbk, \HOM_A(M,N)) \cong (\HOM_A(M,N))^H.
\end{equation}
Here and below, for any $H$-module $W$, we denote by $W^H$ the susbspace of $H$-invariants:
\begin{equation}\label{eq:H-invariants}
    W^H=\{w\in W \ \big| \ hw=\varepsilon (h)w, \forall h \in H\}.
\end{equation}

\begin{rem}\label{rem:P3}
Since $H$ acts on $A_0$ through $\varepsilon$, each $e_x$ satisfies $h\cdot e_x=\varepsilon(h)e_x$, and therefore
\[
h\cdot(ae_x)=\sum h_{(1)}(a)\,h_{(2)}(e_x)=h(a)e_x
\qquad(a\in A,\ h\in H).
\]
Hence $P_x=Ae_x$ is an $H$-submodule of $A$ and so a $B$-module. For a more general $H$-module algebra, the standard cells $P_x$ need not be $B$-modules.
\end{rem}

For a finitely generated graded $B$-module $M$ write
\begin{equation}\label{eqn:Mbar}
\bar M:=M/(A_+M)\cong A_0\otimes_AM .
\end{equation}
Since $A_+M$ is $B$-stable, $\bar M$ is naturally an $A_0\# H$-module; because $H$ acts on $A_0$ through $\varepsilon$, each $e_x\bar M$ is an $H$-submodule and $\bar M=\oplus_{x}e_x\bar M$.

\subsection{Self-injectivity and stable category}
The Hopf algebra $H$ is graded local with graded radical $H_{+}$, so the graded simple $H$-modules are exactly the shifts $q^n\kk$. Every finitely generated graded projective $H$-module is a finite direct sum of shifts of $H$.

\begin{lem}\label{lem:socle}
The socle of $H$ is equal to $\Soc H=H_\ell$ and $\dim_\kk H_\ell=1$.  
\end{lem}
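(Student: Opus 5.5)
We use that $H$ is a finite-dimensional Hopf algebra, hence Frobenius, together with the grading. By Larson--Sweedler the space of left integrals $\int_H^l=\{\Lambda: h\Lambda=\varepsilon(h)\Lambda\ \forall h\}$ is one-dimensional; the paper has already recorded this as the uniqueness of $\Lambda$ up to scalar. The plan has three steps: identify $\Soc H$ (as a left module) with the integrals, show $\int_H^l=H_\ell$, and then $\dim H_\ell=1$ follows from one-dimensionality of the integrals.

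\emph{Step 1: $\Soc H=\int_H^l$.} Since $H$ is graded local with radical $H_+$, every graded simple left $H$-module is a shift of $\kk$ on which $H_+$ acts by zero. Hence the socle of ${}_HH$ is the left annihilator space
\[
\{v\in H : H_+v=0\}.
\]
For $v$ in this space and $h\in H$, write $h=\varepsilon(h)1+(h-\varepsilon(h)1)$ with $h-\varepsilon(h)1\in H_+$; this uses $H_0=\kk$ and $\ker\varepsilon=H_+$. It follows that $hv=\varepsilon(h)v$. So the annihilator space is exactly $\int_H^l$. It is homogeneous because $H_+$ is a graded ideal, and it is one-dimensional by Larson--Sweedler.

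\emph{Step 2: $H_\ell\subseteq\Soc H$.} Any $v\in H_\ell$ satisfies $H_+v\subseteq H_{>\ell}=0$ by the definition \eqref{eq:ell} of $\ell$. Thus $0\ne H_\ell\subseteq\int_H^l$.

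\emph{Step 3: conclusion.} Since $\int_H^l$ is one-dimensional and contains the nonzero space $H_\ell$, we get $\int_H^l=H_\ell$ with $\dim_\kk H_\ell=1$. The same argument with right multiplication gives the right socle, which also equals $H_\ell$. In particular the integral $\Lambda$ is homogeneous of degree $\ell$, consistent with \eqref{eq:ell}.

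The only nontrivial input is the one-dimensionality of integrals, which is Larson--Sweedler. If one wanted to avoid citing it, the alternative is to argue from the Frobenius property: the socle of a local Frobenius algebra is simple, hence one-dimensional over the split field $\kk=H/H_+$. The socle being homogeneous, it must then lie in a single degree, and Step 2 forces that degree to be $\ell$. Either route ends in the same conclusion, and no further obstacle is expected.
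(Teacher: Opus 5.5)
Your proof is correct and has the same skeleton as the paper's: $H_+H_\ell\subseteq H_{>\ell}=0$ puts the nonzero space $H_\ell$ inside the socle, and a one-dimensionality statement for the socle finishes the argument. The paper gets that statement from the simplicity of the socle of a local Frobenius algebra, which is exactly your fallback route; your main route instead identifies $\Soc H$ with the left integrals via $\ker\varepsilon=H_+$ and invokes Larson--Sweedler uniqueness of integrals, which is the same underlying input. Your route has the small bonus of making explicit the identification of the socle with the $H$-invariants, which the paper uses later (in the proof of Lemma~\ref{lem:syzygy-socle-window}).
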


The number $\ell$ is also known as the \emph{Gorenstein parameter} of the graded self-injective algebra $H$. Any nonzero element $\Lambda \in H_\ell$ can be taken to be the \emph{integral} by the lemma.

\begin{proof}
$H$ is graded local and Frobenius, so the regular module has a simple socle.  Since 
\[H_{+}\cdot H_\ell = H_\ell \cdot H_{+}\subset H_{>\ell}=0,\]
we have $H_\ell\subseteq\Soc H$, whence $\dim H_\ell=1$ and $\Soc H=H_\ell$.  
\end{proof}

We refer the reader to \cite{Hopforoots, QYHopf} for the definitions of the \emph{homotopy category} $\Cat(A,H)$ and the \emph{derived category} $\Der(A,H)$. If $A=\Bbbk$, then
\[
\Cat(\Bbbk,H)=\Der(\Bbbk, H)=H\udgmod
\]
The action in \eqref{eq:H-action} descends to the homotopy and derived category levels. Both $\Cat(A,H)$ and  $\Der(A,H)$ are triangulated module categories over $H\udgmod$ given by tensoring a $B$-module with an $H$-module:
\begin{equation}\label{eq:H-action-triangulated}
    \Cat(A,H) \times H\udgmod \xrightarrow{ \otimes } \Cat(A,H), \quad \quad
    \Der(A,H) \times H\udgmod \xrightarrow{ \otimes } \Der(A,H).
\end{equation}

Thanks to Lemma \ref{lem:socle}, the injective hull of $\kk$ in graded $H$-modules is $q^{-\ell}H$, and the \emph{suspension} or \emph{shift functor} of the triangulated category $H\udgmod$, and of the hopfological homotopy category $\Ccat$ and $\Der(A,H)$, is given by
\begin{subequations}\label{eq:shift}
\begin{equation}
M[1]:=\mathrm{Coker}\left(M\xrightarrow{\lambda_M} q^{-\ell}M\otimes H\right) = q^{-\ell} \left( M\otimes(H/\Bbbk\Lambda)\right),
\end{equation}
the cokernel of the degree-zero embedding $\lambda_M:=\Id_M\otimes \Lambda : M\to q^{-\ell}(M\otimes H)$.  Its inverse is 
\begin{equation}\label{eq:shift-inverse}
M[-1]\cong\Ke\left( M\otimes H\xrightarrow{\varepsilon_M} M\right)= M\otimes H_{+},
\end{equation}
\end{subequations}
where $\varepsilon_M:=\Id_M \otimes \varepsilon $.
The normalization $q^{-\ell}$ is exactly what makes $\lambda_M$ homogeneous of degree zero.  See \cite{Hopforoots, QYHopf, QYRickard} for more details.

\subsection{Cofibrant and cellular modules}
We recall some useful hopfological module properties from \cite{QYHopf}.

\begin{defn}\label{def-cofibrant-p-DG-modules} 
\begin{itemize}
\item[(1)] A $B$-module $P$ is  called \emph{cofibrant} if for any surjective
quasi-isomorphism of $B$-modules $M\twoheadrightarrow N$, the
induced map of graded $\Bbbk$-vector spaces $\HOM_{B}(P,M) \to
\HOM_{B}(P,N)$ is surjective.
\item[(2)]We say that $P$ satisfies \emph{property-(P)} if the
following two conditions holds:
\begin{itemize}
\item[(2.1)] There is an exhaustive (possibly infinite) filtration
of $P$ by $B$-submodules:
$$0= F_{0}\subset F_1\subset \cdots \subset F_r\subset F_{r+1}
\subset \cdots \subset P,$$ Here being exhaustive means that
$P=\cup_{r=1}^{\infty}F_r$.
\item[(2.2)]The associated graded
modules of the filtration $F_{r+1}/F_r$ for all $r\in \mathbb{N}$ are
isomorphic to (possibly infinite) direct sums of free $A$-modules of
the form $q^sA$.
\end{itemize}
\item[(3)] A finitely generated graded $B$-module $M$ is called \emph{finite cell} if it admits a finite-step filtration by $B$-submodules
\[
0=F_0\subset F_1\subset\cdots\subset F_r=M
\]
whose subquotients are of the form
\[
F_i/F_{i-1}\cong q^{d_i}P_{x_i}.
\]
We call such a filtration \footnote{In contrast to \cite{SchPos}, we do not impose any ordering condition on the integers $d_i$.} a \emph{finite-cell filtration}.
\end{itemize}
\end{defn}

\begin{lem}\label{lem:cofibrantcriterion}
A $B$-module is cofibrant if and only if it is a strict
$B$-module summand of a module with property (P).  In particular, its
underlying graded $A$-module is projective.
\end{lem}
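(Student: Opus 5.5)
The argument splits into the two directions of the equivalence, followed by the remark about the underlying $A$-module.

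\emph{Modules with property (P) are cofibrant.} Let $P$ have property (P) with exhaustive filtration $F_r$, and let $\pi\colon M\twoheadrightarrow N$ be a surjective quasi-isomorphism with a map $g\colon P\to N$. I will build a lift $\tilde g\colon P\to M$ inductively on $F_r$.

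1. Each subquotient $F_{r+1}/F_r$ is a direct sum of $q^sA$. By the hom–invariant identification \eqref{eq:H-invariants-in-HOM}, a $B$-map out of $q^sA\otimes V$ is determined by an $H$-map out of $V$.
2. Choose an $H$-stable generating subspace of $F_{r+1}/F_r$. I expect this step to be the main obstacle. The $A$-generators of each free summand $q^sA$ are $H$-invariant only when the summand sits $H$-stably, so I will use the convention of \cite[Section~6]{QYHopf}: each layer has the form $A\otimes V$ with $V$ an $H$-module that is a direct sum of shifts of $\kk$ and of $H$.
3. With that convention, extending the lift across the layer reduces to an $H$-module lifting problem. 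Given a map $V\to N$ and a prescribed lift on the boundary, I need a map $V\to M$.
4. The kernel $K$ of $\pi$ is acyclic, meaning it is stably zero and hence projective-injective as an $H$-module. So the obstruction, an element of $\mathrm{Ext}^1_H(V,K)$-type data, vanishes.
5. More concretely, $\pi$ is split surjective after restriction to $H$, because $K$ is $H$-injective and $\pi$ is a quasi-isomorphism. The lifting problem therefore reduces to the relative projectivity of $A\otimes V$ with respect to $H$-split surjections, which is standard.
6. Passing to the colimit of the $F_r$ gives the lift on all of $P$, because the filtration is exhaustive.

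\emph{Summands of (P)-modules are cofibrant.} Cofibrancy, being a lifting property, is preserved by strict summands.

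\emph{Cofibrant modules are summands of (P)-modules.} Given a cofibrant $M$, I construct a bar-type resolution $\mathbf{p}M\to M$ that is a surjective quasi-isomorphism with $\mathbf{p}M$ having property (P), as in \cite[Theorem~6.6]{QYHopf}.

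1. Start with $F_1=A\otimes M$, or rather $B\otimes_H M$ together with its surjection onto $M$.
2. Iteratively kill the kernel modulo projective-injectives, using the cone construction.
3. Take the union. Exhaustiveness and the free layers hold by construction, and the map is a quasi-isomorphism.

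Cofibrancy of $M$ then lifts $\Id_M$ through this surjective quasi-isomorphism, giving a section. So $M$ is a strict $B$-summand of $\mathbf{p}M$.

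\emph{The underlying $A$-module.} A module with property (P) is an iterated extension of free graded $A$-modules along an exhaustive filtration. Each step splits over $A$ because the quotient is free, so the module is $A$-free. A summand of a free module is projective, which gives the final claim.
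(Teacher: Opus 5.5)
Your route reconstructs what the paper simply cites from \cite{QYHopf}. In one direction, (P)-modules have the lifting property. In the other, a cofibrant $M$ splits off a resolution $\mathbf pM\twoheadrightarrow M$ with property (P). The gap sits exactly at the point the paper's one-line proof addresses.

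In item (2.2) of Definition~\ref{def-cofibrant-p-DG-modules}, each layer must be $B$-isomorphic to a direct sum of $q^sA=A\otimes q^s\kk$, with trivial $H$-action on $q^s\kk$. The bar resolution you invoke has layers $A\otimes V$ with $V$ an arbitrary graded $H$-module. Already the first layer is $B\otimes_HM\cong A\otimes M$, with $V=M|_H$. For $H=\kk[\partial]/(\partial^p)$ with $p>2$, this $V$ may contain $\kk[\partial]/(\partial^2)$. So ``the free layers hold by construction'' is false for this definition. The convention you adopt in step~2 (sums of shifts of $\kk$ and of $H$) is neither the paper's definition nor what your construction produces.

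The missing step is a refinement of each layer. Since $H_+^{\ell+1}\subseteq H_{>\ell}=0$, every $V$ has the finite radical filtration $V\supseteq H_+V\supseteq\cdots\supseteq H_+^{\ell+1}V=0$. The $A\otimes H_+^kV$ are $B$-submodules of $A\otimes V$. Since $H$ acts through $\varepsilon$ on $H_+^kV/H_+^{k+1}V$, each subquotient $A\otimes(H_+^kV/H_+^{k+1}V)$ is $B$-isomorphic to a sum of $q^sA$. Pulling back inserts finitely many steps between consecutive $F_r$, so the refined filtration is still $\mathbb N$-indexed and exhaustive.

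Once (2.2) is read as a $B$-isomorphism, as it must be, your worry in step~2 disappears, because the layer generators are already $H$-invariant. If mere $A$-isomorphisms were allowed, the acyclic non-cofibrant module of Example~\ref{ex:weak-endpoint-acyclic} would satisfy (P).

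A smaller issue is in step~5. Relative projectivity of $A\otimes V$ with respect to $H$-split surjections solves an absolute lifting problem. Your inductive step is relative: $\tilde g_r$ is prescribed on $F_r$, and $F_{r+1}$ is a non-split extension. Step~4 names the right obstruction, and you can implement it through the sequence of $H$-modules
\[
0\to\HOM_A(F_{r+1}/F_r,K)\to\HOM_A(F_{r+1},M)\to\HOM_A(F_{r+1},N)\times_{\HOM_A(F_r,N)}\HOM_A(F_r,M)\to0 .
\]
This sequence is exact because $F_{r+1}\cong F_r\oplus F_{r+1}/F_r$ over $A$ with free quotient. Its kernel is a product of shifts of $K$, hence $H$-injective, so the sequence splits over $H$. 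The $H$-invariant pair $(g|_{F_{r+1}},\tilde g_r)$ then lifts to an invariant, i.e.\ a $B$-map. Alternatively, pass to $E=M\times_NF_{r+1}$ and split $E/\{(\tilde g_r(f),f):f\in F_r\}\to F_{r+1}/F_r$, whose kernel is $K$, using your relative-projectivity argument.

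The remaining parts are fine: closure under summands, splitting $\Id_M$ through $\mathbf pM\to M$, and $A$-projectivity via $P\cong\bigoplus_rF_{r+1}/F_r$ over $A$.
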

\begin{proof}
This is \cite[Corollaries~6.8 and~6.9]{QYHopf}; here the general
Property-(P) filtration can be refined into shifts of $A$, using the
finite radical filtration of the local Hopf algebra $H$.
\end{proof}

We also define the subcategories generated by these modules.

\begin{defn}
\begin{itemize}
    \item[(1)] Let $\Ecat$ denote the category of finitely generated cofibrant $B$-modules.
    \item[(2)] Let $\Pun$ denote additive category consisting of finite-cell modules.
    \end{itemize}
\end{defn}

Both categories $\mc{E}$ and $\Pun$ are exact categories, with admissible short exact sequences (also known as ``conflations'') being short exact sequences of $B$-modules. Such short exact sequences necessarily split over $A$.

\begin{lem}\label{lem:Pun-property}
\begin{enumerate}
    \item[(i)] $\Pun\subseteq \Ecat$.  That is, every object of $\Pun$ is cofibrant.
    \item[(ii)] For every finite-dimensional graded $H$-module $V$ and every $x\in X$ one has $P_x\otimes V\in\Pun$.  
\end{enumerate}

\end{lem}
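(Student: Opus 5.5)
For (i), I would reduce to Lemma~\ref{lem:cofibrantcriterion} by showing that each $P_x$ is a strict $B$-summand of a module with property (P). By Remark~\ref{rem:P3}, $P_x=Ae_x$ is a $B$-submodule of $A$. Since $e_x$ is $H$-invariant, right multiplication by $e_x$ and by $1-e_x$ are $B$-linear endomorphisms of $A$. To check this, note that $A$-linearity on the left is clear, and for $h\in H$ we have $h\cdot(ae_x)=h(a)e_x$ by the computation in Remark~\ref{rem:P3}. Hence $A=Ae_x\oplus A(1-e_x)$ as $B$-modules, and $A$ has property (P) with the one-step filtration $0\subset A$. So each shift $q^dP_x$ is cofibrant.

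For a finite-cell module $M$ with filtration $F_\bullet$, I would argue by induction on the length. Cofibrant modules are closed under extensions that split over $A$: given a conflation $0\to F_{i-1}\to F_i\to q^{d_i}P_{x_i}\to0$, it splits over $A$ because $q^{d_i}P_{x_i}$ is $A$-projective. The lifting property can then be checked directly. Given a surjective quasi-isomorphism $M'\twoheadrightarrow N'$ and a map $F_i\to N'$, first lift on $F_{i-1}$. Then extend using an $A$-splitting together with the cofibrancy of the quotient, correcting the obstruction.

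A cleaner route, and the one I would actually use, is to cite \cite[Section~6]{QYHopf}, where cofibrant modules are shown to be closed under such extensions. One can also show that a finite-cell module is a summand of a module with property (P). To do this, embed each $q^{d}P_{x}$ in $q^dA$ and take the iterated extension obtained by pushing the extension classes forward along these split inclusions. The resulting module has a finite filtration with free subquotients and contains $M$ as a summand. The main obstacle is this compatibility of summands with extensions, so I would rely on the extension-closure statement in \cite{QYHopf}.

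For (ii), I would filter $V$ by its radical (or socle) filtration as a graded $H$-module. Because $H$ is graded local, its graded simples are the shifts $q^n\kk$, so $V$ has a finite filtration $0=V_0\subset\cdots\subset V_m=V$ by $H$-submodules with $V_j/V_{j-1}\cong q^{n_j}\kk$. Tensoring is exact over $\kk$ and compatible with the $B$-action from \eqref{eq:H-action}, so $P_x\otimes V_j$ gives a filtration of $P_x\otimes V$ by $B$-submodules. Its subquotients are $P_x\otimes q^{n_j}\kk\cong q^{n_j}P_x$, since $H$ acts on $\kk$ through $\varepsilon$ and the diagonal action then reduces to the action on $P_x$. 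This is a finite-cell filtration, so $P_x\otimes V\in\Pun$.
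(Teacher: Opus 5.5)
Your part (ii), and the first half of (i) (each $q^dP_x$ is a strict $B$-summand of $q^dA$, which has property (P)), match the paper's proof. The gap is in the step that carries the real content of (i): closure of cofibrant modules under $A$-split extensions. Your direct lifting sketch says ``correcting the obstruction'' but gives no reason the obstruction vanishes, and the lifting property of $M''$ does not supply one directly.

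Concretely, take a surjective quasi-isomorphism $N\twoheadrightarrow N'$ with acyclic kernel $K$. The $A$-linear maps $M\to N$ that lift $f$ and extend $g'$ exist, by $A$-projectivity of $M''$, and form a torsor under $\Hom_A^0(M'',K)$. Choose one, $\tilde g$. The subspace $\HOM_A(M'',K)+\kk\tilde g$ of $\HOM_A(M,N)$ is $H$-stable and is an extension of $\kk$ by $\HOM_A(M'',K)$. An $H$-invariant lift exists exactly when this extension splits. It splits because $\HOM_A(M'',K)$ is projective-injective over $H$, which is the criterion of \cite[Corollary~6.9]{QYHopf}. The paper uses that criterion directly. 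Both $A$-projectivity and projectivity of $\HOM_A(-,K)$ for all acyclic $K$ pass to $A$-split extensions, via the exact sequence $0\to\HOM_A(M'',K)\to\HOM_A(M,K)\to\HOM_A(M',K)\to0$. Your fallback, an extension-closure result cited from \cite[Section~6]{QYHopf}, is not something the paper relies on; you should supply this two-line derivation instead.

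Alternatively, the route you set aside has no obstacle at all. Extension classes should be pulled back along the split projections $q^{d_i}A\to q^{d_i}P_{x_i}$, not pushed forward along inclusions. Pulling back along a split epimorphism simply adds the complement as a direct summand. So put $\widetilde M=M\oplus\bigoplus_i q^{d_i}A(1-e_{x_i})$ with filtration $\widetilde F_i=F_i\oplus\bigoplus_{j\le i}q^{d_j}A(1-e_{x_j})$. Its subquotients are $q^{d_i}P_{x_i}\oplus q^{d_i}A(1-e_{x_i})\cong q^{d_i}A$ as $B$-modules, by the splitting $A=Ae_x\oplus A(1-e_x)$ that you already verified. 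Hence $\widetilde M$ has property (P) and contains $M$ as a strict $B$-summand, and Lemma~\ref{lem:cofibrantcriterion} finishes (i).

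This is more elementary than the paper's argument: it uses only the property-(P) characterization, not the $\HOM$-criterion. In exchange, the paper's argument proves the more general fact that all cofibrant modules, not just finite-cell ones, are closed under $A$-split extensions.
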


\begin{proof}
The module $A\otimes q^d\kk$ has property (P) by a one-step filtration, and $q^dP_x=P_x\otimes q^d\kk$ is a $B$-direct summand of it by \eqref{eq:idempotents}; hence each $q^dP_x$ is cofibrant by \cite[Corollary 6.8]{QYHopf}.  By \cite[Corollary 6.9]{QYHopf} a module is cofibrant if and only if it is $A$-projective and $\HomA(-,K)$ is a projective $H$-module for every acyclic $K$.  Both conditions are stable under $A$-split extensions: $A$-projectivity obviously, and the second because an $A$-split short exact sequence $0\to M'\to M\to M''\to0$ yields a short exact sequence
\[
0\to\HomA(M'',K)\to\HomA(M,K)\to\HomA(M',K)\to0
\]
of $H$-modules whose outer terms are projective-injective, so the middle term is too. 

For the second part, every finite-dimensional graded $H$-module $V$ has a finite filtration by graded $H$-submodules whose factors are grading shifts of the trivial module $\kk$.  Tensoring such a filtration with $P_x$ produces an $A$-split filtration of $P_x\otimes V$ whose factors are $P_x\otimes (q^d\kk)\cong q^dP_x$, the isomorphism holding because $H$ acts on the second factor through $\varepsilon$.

\end{proof}

Let us also record the computation of morphism spaces that will be used later.

\begin{lem}\label{lem:Hom}
Let $M$ be $A$-projective and $N$ a graded $B$-module.  The graded space
$\HOM_A(M,N)$ carries the standard $H$-module structure, and
\[
\Hom_B^r(M,N)=\Hom_H^r(\kk,\HOM_A(M,N))= \Hom_A^r(M, N)^H .
\]
If $M$ is cofibrant, then morphisms from $M$ in the homotopy and derived
categories agree, and for every internal degree $r$ one has
\begin{equation}\label{eq:HOM-formula}
\Hom_{\Der(A,H)}^{r}(M,N)
=
\Hom_{\Cat(A,H)}^{r}(M,N)
\cong
\dfrac{\Hom_B^r(M,N)}
{\Lambda\cdot \Hom_A^{r-\ell}(M,N)} .
\end{equation}
Here $\Lambda$ is a homogeneous integral of degree $\ell$.  
\end{lem}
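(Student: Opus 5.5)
The plan has three parts: the first formula, the agreement of derived and homotopy morphisms for cofibrant $M$, and the identification of null-homotopic maps with $\Lambda\cdot\Hom^{r-\ell}_A(M,N)$.

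\textbf{First formula.} This is a graded restatement of \eqref{eq:H-invariants-in-HOM}. By \cite[Definition 5.1]{QYHopf}, $H$ acts on $f\in\HOM_A(M,N)$ by $(h\cdot f)(m)=\sum h_{(2)}f(S^{-1}(h_{(1)})m)$. Since $H_0=\kk$ and $H$ is graded, this action preserves the internal grading up to the degree of $h$. So the invariants of $\HOM_A(M,N)$ decompose degreewise, and the degree-$r$ invariants are exactly $\Hom_A^r(M,N)^H$. By \cite[Lemma 5.2]{QYHopf} these are the $B$-linear maps of degree $r$. The identification $\Hom_H^r(\kk,W)=(W_r)^H$ holds because an $H$-map from $\kk$ is determined by the image of $1$, and that image must be invariant. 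Projectivity of $M$ is not needed at this stage.

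\textbf{Derived equals homotopy.} For cofibrant $M$, this is the standard consequence of the model-type structure in \cite[Section 6]{QYHopf}. Cofibrant objects are left orthogonal to acyclic modules in $\Cat(A,H)$. Every quasi-isomorphism $N'\to N$ has a cone that is acyclic, so applying $\Hom_{\Cat}(M,-)$ to it gives an isomorphism. Hence the localization map $\Hom_{\Cat}(M,N)\to\Hom_{\Der}(M,N)$ is bijective. I would cite this from \cite{QYHopf} rather than reprove it, applying it degree by degree via $q$-shifts of $N$.

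\textbf{Null-homotopic maps.} A map is zero in $\Cat(A,H)$ exactly when it factors through a relatively projective-injective $B$-module, i.e.\ a summand of $N'\otimes H$. The canonical such module receiving $M$ is $q^{-\ell}M\otimes H$, via $\lambda_M$, so every null-homotopic $f$ factors through $\lambda_M$. Under the tensor–hom adjunction, a $B$-map $q^{-\ell}M\otimes H\to q^{-r}N$ corresponds to an $A$-linear map $g\in\Hom_A^{r-\ell}(M,N)$, and composing it with $\lambda_M=\Id_M\otimes\Lambda$ returns $\Lambda\cdot g$. Conversely, for any $g$, the map $\Lambda\cdot g$ is $H$-invariant because $h\Lambda=\varepsilon(h)\Lambda$, and it factors through $q^{-\ell}M\otimes H$. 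This yields \eqref{eq:HOM-formula}.

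\textbf{Main obstacle.} The delicate step is showing that the adjunction image of $g$ composed with $\lambda_M$ really equals $\Lambda\cdot g$ as defined by the $H$-action on $\HOM_A$. This needs the antipode and coproduct bookkeeping, using the identity $\sum\Lambda_{(1)}\otimes S(\Lambda_{(2)})\ldots$ for integrals. I would verify it by the explicit formula $(h\cdot g)(m)=\sum h_{(2)}g(S^{-1}(h_{(1)})m)$, or simply cite \cite[Lemma 5.3]{QYHopf}, where this quotient description is established.
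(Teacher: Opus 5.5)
Your proposal is correct and is essentially the argument the paper relies on: the paper's proof is only a citation of \cite[Proposition~5.10 and Corollary~6.10]{QYHopf}, and your three steps unpack exactly those results, namely the invariants description, the left orthogonality of cofibrant modules to acyclic ones, and factoring null-homotopic maps through $\lambda_M$ using $M\otimes H\cong B\otimes_A M$. The step you flag as the main obstacle is in fact immediate: the adjoint of $g\in\Hom_A^{r-\ell}(M,N)$ is $m\otimes h\mapsto (h\cdot g)(m)=\sum h_{(2)}g(S^{-1}(h_{(1)})m)$, so precomposing with $\lambda_M$ gives $\Lambda\cdot g$ with no integral identity needed; the step that does deserve one more line is that every map from $M$ into a relatively projective-injective module extends along the $A$-split monomorphism $\lambda_M$.
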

\begin{proof}
    See \cite[Proposition~5.10 and Corollary~6.10]{QYHopf}.
\end{proof}

\subsection{Compact derived category.} We refer the reader to \cite[Section 7]{QYHopf} for more details about this subsection.

\begin{defn}\label{def:compact-objects}An object $X\in \Der(A,H)$ is said to be
\emph{compact} if the functor $$\Hom_{\Der(A,H)}(X,\mbox{-}):\Der(A,H) \to \Bbbk\mbox{-}\mathrm{vect}$$ commutes
with arbitrary direct sums.
\end{defn}

Let $\Dc$ denote the strictly full subcategory of compact
hopfological modules in $\Der(A,H)$.  In \cite[Section 7.2]{QYHopf}, it is shown that $\Dc$ is the smallest strictly full triangulated subcategory of $\Der(A,H)$ that contains the cell modules $\{q^{n}P_x|n\in \mathbb{Z}, x\in X\}$ and
is closed under taking direct summands. Any object in $\Dc$ is isomorphic to a direct summand, in the derived category, 
of a finite-cell module.

Set $\Tcat\subseteq\Der(A,H)$ to be the strictly full image of $\Pun$ in the derived category. Clearly, $\Tcat\subseteq\Dc$.
The following result is a key structural property of $\Tcat$ that we will use repeatedly throughout.

\begin{lem}\label{lem:T-triangulated}
$\Tcat$ is a strictly full triangulated subcategory of $\Dc$.
\end{lem}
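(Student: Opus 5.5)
To prove that $\Tcat$ is a strictly full triangulated subcategory of $\Dc$, I would check three closure properties in turn: closure under isomorphism, closure under the shift functors $[\pm1]$, and closure under cones. Strict fullness holds by definition, since $\Tcat$ is the strictly full image of $\Pun$. Every finite-cell module is compact by the description of $\Dc$ in \cite[Section~7]{QYHopf}, so we already have $\Tcat\subseteq\Dc$.

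\emph{Shifts.} Take $M\in\Pun$ with finite-cell filtration $F_\bullet$. By \eqref{eq:shift}, $M[1]=q^{-\ell}(M\otimes(H/\kk\Lambda))$ and $M[-1]=M\otimes H_+$. Let $V$ be a finite-dimensional graded $H$-module; I claim $M\otimes V\in\Pun$. Tensoring with $V$ is exact over $\kk$, so $F_\bullet\otimes V$ is a filtration of $M\otimes V$ by $B$-submodules with subquotients $q^{d_i}P_{x_i}\otimes V$. Each of these lies in $\Pun$ by Lemma~\ref{lem:Pun-property}(ii). Refining the filtration through the finite-cell filtrations of these subquotients gives a finite-cell filtration of $M\otimes V$. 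Since $\Pun$ is closed under the shifts $q^n$, both $M[1]$ and $M[-1]$ lie in $\Pun$.

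\emph{Cones.} Let $f\colon M\to N$ be a morphism in $\Der(A,H)$ with $M,N\in\Pun$. Since $M$ is cofibrant (Lemma~\ref{lem:Pun-property}(i)), Lemma~\ref{lem:Hom} says $f$ is represented by a genuine $B$-module map $f\colon M\to N$. The standard triangles in $\Cat(A,H)$ are the mapping-cone triangles. Form the cone $C_f$ as the pushout of $\lambda_M\colon M\to q^{-\ell}(M\otimes H)$ along $f$. This gives a short exact sequence of $B$-modules
\[
0\to N\to C_f\to M[1]\to 0,
\]
which is split over $A$ because $\lambda_M$ is $A$-split. Concatenating the finite-cell filtration of $N$ with the preimage in $C_f$ of the filtration of $M[1]$ (which lies in $\Pun$ by the previous step) yields a finite-cell filtration of $C_f$. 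The homotopy category triangles map to distinguished triangles in $\Der(A,H)$, so every distinguished triangle with two vertices in $\Tcat$ has its third vertex isomorphic to an object of $\Tcat$. Rotation handles the other positions, and $0\in\Pun$ trivially.

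\emph{Main obstacle.} The delicate step is confirming that the cone triangle in $\Cat(A,H)$, built from an honest $B$-map, has the shape claimed: namely, that in the conventions of \cite{QYHopf} the cone is exactly the pushout along $\lambda_M$ and its quotient is $M[1]$. Once that is in place, the rest is the observation that finite-cell filtrations can be concatenated along short exact sequences of $B$-modules. Such concatenation needs no ordering of the cell degrees, which is precisely why Definition~\ref{def-cofibrant-p-DG-modules}(3) imposes none.
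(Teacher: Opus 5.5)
Your proposal is correct and follows essentially the same route as the paper. Shift closure comes from tensoring with $q^{-\ell}(H/\kk\Lambda)$ and $H_+$, and cone closure comes from the $A$-split sequence $0\to N\to C_f\to M[1]\to 0$ given by the pushout along $\lambda_M$, with the cell filtrations of the outer terms concatenated. Your explicit step of tensoring the cell filtration of $M$ with $V$ and refining it via Lemma~\ref{lem:Pun-property}(ii) fills in a detail the paper leaves implicit: that lemma only treats $P_x\otimes V$, yet the paper asserts that all of $\Pun$ is stable under $(-)\otimes V$.
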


\begin{proof}
First, $\Pun$ is stable under $(\mbox{-})\otimes V$ by Lemma \ref{lem:Pun-property}. Applying this to $V=H/\Bbbk\Lambda$ and using \eqref{eq:shift} gives $\Tcat[1]\subseteq\Tcat$. Similarly $\Tcat[-1]\subseteq\Tcat$.

Second, let $f:M\to N$ be a morphism of $\Dc$ between objects of $\Pun$.  By Lemma \ref{lem:Hom} we may represent $f$ by an actual $B$-module map.  In the category $\Ecat$ the cone of $f$ is the pushout 
$$C_f:=N\cup_M\left(q^{-\ell}(M\otimes H)\right)$$ along $\lambda_M$ (c.f.~\cite[Definition 3.2]{QYHopf}), which sits in an $A$-split short exact sequence
\[
0\longrightarrow N\longrightarrow C_f\longrightarrow M[1]\longrightarrow 0 .
\]
Both outer terms lie in $\Pun$ by the previous paragraph, and concatenating their cell filtrations exhibits $C_f\in\Pun$.  Hence $\Tcat$ is closed under cones, and being strictly full it is a triangulated subcategory.
\end{proof}

\begin{rem}
We emphasize here that $\Tcat$ is a priori \emph{not} necessarily idempotent-complete: Example \ref{ex:badU} below will show that $\Pun$ itself is not idempotent complete in general. Whether $\Tcat$ is thick in $\Dc$ is the main focus of this paper.
\end{rem}

\begin{lem}\label{lem:finite-cofibrant-model}
Every object of $\Dc$ is represented by a cofibrant $B$-module that is a strict $B$-module summand of a finite-cell module. In other words, $\Dc$ is equivalent to the idempotent completion of the image of $\Pun$ in $\Der(A,H)$.
\end{lem}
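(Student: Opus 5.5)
The plan is to combine the description of $\Dc$ from \cite[Section~7]{QYHopf} with the hom formula of Lemma~\ref{lem:Hom}, so that a homotopy idempotent on a finite-cell module can be replaced by a strict one on a slightly larger module.

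Start with $Y\in\Dc$. By \cite[Section~7.2]{QYHopf}, $Y$ is a direct summand in $\Der(A,H)$ of some $M\in\Pun$. So there is an idempotent $e\in\End_{\Der(A,H)}(M)$ that splits as $M\to Y\to M$. Since $M$ is cofibrant (Lemma~\ref{lem:Pun-property}(i)), Lemma~\ref{lem:Hom} identifies $\End_{\Der}(M)$ with $\End_{\Cat}(M)$. Hence $e$ is represented by a genuine $B$-module map $f:M\to M$ with $f^2\simeq f$ modulo maps factoring through projective-injective ($H$-contractible) modules. The task is to split this homotopy idempotent by a strict idempotent on a finite-cell module.

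For the strict splitting I would use the standard trick that homotopy idempotents split in any triangulated category with countable coproducts, but carried out on the module level with a finite model. The correction goes as follows. The difference $f^2-f$ factors through the relative injective $q^{-\ell}(M\otimes H)$ via $\lambda_M$, and $M\otimes H\in\Pun$ by Lemma~\ref{lem:Pun-property}(ii).

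1. Set $M'=M\oplus q^{-\ell}(M\otimes H)$, which still lies in $\Pun$ and is isomorphic to $M$ in $\Cat(A,H)$, since the second summand is contractible.
2. Modify $f$ to an endomorphism $f'$ of $M'$ that is a genuine idempotent and still represents $e$, following the argument that an idempotent in $\End_B(M)/I$ lifts to $\End_B(M')$ after adding a contractible summand.

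The cleanest route for step 2 is this. $\End_B(M')$ is finite-dimensional in each degree, and the degree-zero part is finite-dimensional because $M$ is finitely generated and $A$ is locally finite. For finite-dimensional algebras, idempotents lift modulo any ideal $I$ once a suitable power of the lift is taken; this is the Fitting argument. Concretely, replace $f$ by a suitable polynomial $g=p(f)$ with $g^2=g$ in the finite-dimensional algebra $\kk[f]\subseteq\End_B^0(M)$. Write $\kk[f]$ as a product of local algebras by the Fitting decomposition; $g$ is the idempotent projecting onto the generalized eigenspaces of $f$ at eigenvalue $1$. Since $f^2-f$ lies in the two-sided ideal $I$ of null-homotopic maps, the image of $g$ in $\End_{\Cat}(M)$ is an idempotent. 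It agrees with the Fitting idempotent of $e$, which is $e$ itself because $e^2=e$. Then $P:=g(M)$ is a strict $B$-summand of $M\in\Pun$. It is cofibrant by Lemma~\ref{lem:cofibrantcriterion}, and it represents $Y$, because the splitting of $e$ in $\Der$ is unique up to isomorphism.

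The last sentence of the lemma then follows. $\Tcat\subseteq\Dc$, every object of $\Dc$ is a summand of an object of $\Tcat$, and $\Dc$ is idempotent complete, being thick in $\Der(A,H)$ and closed under summands. So $\Dc$ is the idempotent completion of $\Tcat$.

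The main obstacle is the finite-dimensionality needed for the Fitting step. $\End_B^0(M)$ is finite-dimensional because $M$ is generated in finitely many degrees and each $A_n$ is finite-dimensional; I would argue this via $\Hom_B^0(M,M)\subseteq\Hom_A^0(M,M)$, using local finiteness and the cell filtration. I would also need to check that $\kk[f]\to\End_{\Cat}(M)$ is a ring map, so that idempotents are sent to idempotents and the Fitting idempotents are compatible. That holds because the null-homotopic maps form a two-sided ideal.
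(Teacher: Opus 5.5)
Your argument is correct and is essentially the paper's proof: you lift the derived idempotent to a strict degree-zero $B$-endomorphism, use local finiteness to make $\kk[f]\subseteq\End_B^0(M)$ finite-dimensional, and split off a strict $B$-summand by a polynomial (Fitting/CRT) projector. The paper keeps the invertible part of $f$ where you keep its generalized $1$-eigenspace; both project to $e$ in $\End_{\Cat(A,H)}(M)$, since the components at eigenvalues $\lambda\ne0,1$ are contractible. The detour through $M'=M\oplus q^{-\ell}(M\otimes H)$ in steps 1--2 is never actually used and can simply be deleted.
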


We emphasize here that the
summand in the statement of the lemma is not asserted to be finite cell.

\begin{proof}
Let $X$ be a derived retract of $G\in\Pun$, with associated idempotent
$e\in\End_{\Der(A,H)}(G)$.  Choose a genuine degree-zero $B$-endomorphism
$a:G\to G$ representing $e$.  Such a representative exists by
Lemma~\ref{lem:Hom}.  The underlying graded $A$-module of $G$ is a finite
sum of shifted $P_x$'s.  Since $A$ is locally finite,
$\End_A^0(G)$ is a finite-dimensional algebra, so $a$ satisfies a
polynomial over $\kk$.

Factor the minimal polynomial as $t^n g(t)$, where $g(0)\ne0$.
The Chinese remainder theorem gives a polynomial projector and a
strict graded $B$-module decomposition
\[
 G=G_{\mathrm{nil}}\oplus G_{\mathrm{inv}}
\]
on which $a$ is respectively nilpotent and invertible.  This is an
algebraic Fitting decomposition; $G$ need not be finite-dimensional
over $\kk$.  On these two summands the homotopy class $[a]$ remains
idempotent, hence is respectively zero and the identity.  Thus
$G_{\mathrm{inv}}$ is a splitting object for $e$ and represents $X$.
It is a strict summand of $G$, so is cofibrant and finitely generated
graded-projective.  No cell filtration of $G_{\mathrm{inv}}$ has been
used.
\end{proof}

\subsection{A Frobenius model}\label{subsec:Frob-model}
The additive category $B\dgmod$ is a Frobenius exact category if we declare admissible short exact sequences to be those short exact sequences of $B$-modules that split over $A$. The class of projective-injectives in this exact structure consists precisely of the contractible modules (\cite[Lemma 3.1--3.3]{QYRickard}). We may thus form its stable category $B\udgmod$. By \cite[Theorem 3.4]{QYRickard}, there is an equivalence of triangulated categories
\begin{equation}
    B\udgmod \cong \Cat(A,H).
\end{equation}
Denote, for now, the essential images of $\Ecat$ and $\Pun$ in $\Cat (A,H)$ under this equivalence by $\E(A,H)$ and $\Tcat$ respectively. The notations are justified by the next lemma.

\begin{lem}\label{lem:frobenius}
With the graded-$A$-split exact structure, $\Ecat$ is Frobenius, with
projective-injective objects precisely its contractible modules.
Its stable category $\E(A,H)$ is a full triangulated subcategory of $\Cat(A,H)$
and is equivalent to $\Dc$.  In particular, it is idempotent complete.
\end{lem}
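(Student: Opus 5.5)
The proof has three parts: show that $\Ecat$ is Frobenius, identify its stable category with a full triangulated subcategory of $\Cat(A,H)$, and then compare that subcategory with $\Dc$.

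\textbf{Frobenius structure.} $\Ecat$ is closed under $A$-split extensions inside $B\dgmod$, by the argument in the proof of Lemma~\ref{lem:Pun-property}(i): $A$-projectivity and projectivity of $\HomA(-,K)$ are both extension-stable. So $\Ecat$ is an extension-closed full subcategory of the Frobenius exact category $B\dgmod$ with the $A$-split structure.

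Next we need enough projective-injectives inside $\Ecat$. For $M\in\Ecat$ the contractible module $q^{-\ell}M\otimes H$ is finitely generated. It is cofibrant, since $M\otimes V$ is cofibrant for every finite-dimensional $V$ by the same filtration argument as Lemma~\ref{lem:Pun-property}(ii) applied to a summand of a property-(P) module. The two sequences
\[
0\to M\xrightarrow{\lambda_M}q^{-\ell}M\otimes H\to M[1]\to0,\qquad 0\to M[-1]\to M\otimes H\xrightarrow{\varepsilon_M}M\to0
\]
are $A$-split and have all terms in $\Ecat$, because $M[\pm1]=M\otimes V$ for suitable $V$. The contractible modules are projective-injective in $B\dgmod$ by \cite[Lemma 3.1--3.3]{QYRickard}, hence also in $\Ecat$. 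Conversely, a projective-injective $P$ of $\Ecat$ is a summand of $P\otimes H$ via the split sequence above, so it is contractible. Thus $\Ecat$ is Frobenius with the stated projective-injectives.

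\textbf{Triangulated subcategory.} Since both Frobenius structures have the same projective-injectives, $\underline{\Ecat}\to B\udgmod\simeq\Cat(A,H)$ is fully faithful. Its image $\E(A,H)$ is closed under $[\pm1]$ and under cones, because cones are the pushouts $N\cup_M q^{-\ell}M\otimes H$, which are $A$-split extensions of objects of $\Ecat$ (as in Lemma~\ref{lem:T-triangulated}). Hence $\E(A,H)$ is a full triangulated subcategory.

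\textbf{Equivalence with $\Dc$.} By Lemma~\ref{lem:Hom}, homotopy and derived morphisms agree on cofibrant sources, so the composite $\E(A,H)\to\Der(A,H)$ is fully faithful. Its image lies in $\Dc$: a finitely generated cofibrant module is a strict summand of a property-(P) module by Lemma~\ref{lem:cofibrantcriterion}, and finite generation lets one truncate to a finite stage of the filtration. Essential surjectivity onto $\Dc$ follows from Lemma~\ref{lem:finite-cofibrant-model}, which represents every compact object by a strict summand of a finite-cell module, and such a summand lies in $\Ecat$. Idempotent completeness then transfers from $\Dc$, which is a thick subcategory of $\Der(A,H)$, and $\Der(A,H)$ is idempotent complete because it has countable coproducts.

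\textbf{Main obstacle.} The step I expect to be delicate is showing that an arbitrary finitely generated cofibrant module is compact. Finite generation must be converted into being a summand of a finite piece of a property-(P) filtration. I would do this by letting the finitely many generators land in some $F_r$, so that $M$ becomes a retract of $F_r$. This only works if the retraction $F\to M$ and the inclusion $M\to F$ restrict compatibly to $F_r$, and that compatibility is what has to be checked.
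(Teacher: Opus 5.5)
Your outline follows the paper's proof step for step. Both arguments use:
\begin{itemize}
\item extension-closure of $\Ecat$;
\item the $A$-split sequences built from $\lambda_M$ and $\varepsilon_M$ to supply enough projective-injectives, and the identification of these with the contractible modules;
\item full faithfulness in $\Cat(A,H)$, because null-homotopic maps factor through $q^{-\ell}(M\otimes H)$;
\item Lemma~\ref{lem:Hom} for full faithfulness into $\Der(A,H)$, and Lemma~\ref{lem:finite-cofibrant-model} for essential surjectivity;
\item idempotent completeness inherited from $\Dc$.
\end{itemize}

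\textbf{The gap.} The step that fails as written is the compactness of an arbitrary object of $\Ecat$, and you have located the difficulty in the wrong place. The compatibility you plan to check is automatic. Once the finitely many generators of $i(M)$ lie in $F_r$, the corestriction of $i$ and the restriction $\pi|_{F_r}$ still satisfy $\pi|_{F_r}\circ i=\pi\circ i=\Id_M$. The real problem is that $F_r$ need not be compact. In property (P), each layer $F_k/F_{k-1}$ may be an \emph{infinite} direct sum of shifts of $A$. Being a retract of a finite extension of such modules says nothing about compactness.

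\textbf{Two repairs.} Either one closes the gap.
\begin{itemize}
\item \emph{Shrink $F_r$.} The generators of $i(M)$ involve only finitely many lifted basis vectors. For each such vector $g$ in layer $k$, the elements $hg-\varepsilon(h)g$, with $h$ running over a basis of $H$, are finitely many. They lie in $F_{k-1}$ and involve only finitely many lower basis vectors. Closing up layer by layer, from the top down, gives a finitely generated $B$-submodule $G\supseteq i(M)$ with a finite filtration by finite sums of $q^sA$. So $M$ is a strict retract of a finite-cell module, hence compact.
\item \emph{Use the Hom formula directly.} For $M$ cofibrant and finitely generated over $A$, both $\Hom_B(M,-)$ and $\Lambda\cdot\Hom_A^{-\ell}(M,-)$ commute with arbitrary direct sums. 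By \eqref{eq:HOM-formula}, $\Hom_{\Der(A,H)}(M,-)$ then does too.
\end{itemize}

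For comparison, the paper's proof handles this step only by citing Lemma~\ref{lem:finite-cofibrant-model}. That lemma gives the converse direction: compact objects are retracts of finite-cell modules. So you were right to single this step out, but the argument you sketch does not close it.
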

\begin{proof}
The category of finitely generated cofibrant modules is closed under
$A$-split extensions.  It is also closed under tensoring with a
finite-dimensional $H$-module, as follows from the Property-(P)
characterization and the tensor action.  For $M\in\Ecat$, the maps of \eqref{eq:shift}
\[
\lambda_M: M\longrightarrow q^{-\ell}(M\otimes H),\qquad \varepsilon_M:
 M\otimes H\longrightarrow M
\]
are respectively an $A$-split monomorphism and epimorphism.  Their
cokernel $M[1]$ and kernel $M[-1]$ are again in $\Ecat$.
The middle modules are relative projective-injective, so there are
enough projectives and injectives.  Their summands are exactly the
contractible objects of $\Ecat$.
A null-homotopic map out of $M$ factors through
$q^{-\ell}(M\otimes H)$; hence the stable category of $\Ecat$ embeds fully
faithfully in $\Cat(A,H)$.  These are the relative Frobenius and
null-homotopy constructions of~\cite{QYHopf,QYRickard}.

Every object of $\Ecat$ is compact by
Lemma~\ref{lem:finite-cofibrant-model}, and morphisms between such
objects are computed in the homotopy category by Lemma~\ref{lem:Hom}.
Essential surjectivity onto $\Dc$ is
Lemma~\ref{lem:finite-cofibrant-model}.  This proves the equivalence,
and idempotent completeness follows from that of $\Dc$.
\end{proof}

\section{Stable cellularity}\label{sec:koszul}
\subsection{A motivating example} \label{subsec:motivating-example}
Assume $\operatorname{char}\kk=p>0$. In the first part of this section, we take
$H=\kk[\partial]/(\partial^p)$, where $\mathrm{deg}(\partial)=2$. This Hopf algebra has played a critical role in categorification at prime roots of unity \cite{KQ}.

The construction is closely related to the classical Koszul complex.  When $p=2$, it recovers, up to the usual transpose and sign conventions, the differential-module example of Avramov--Buchweitz--Iyengar \cite[Example 5.6]{ABI}. See also \cite{BrownErman}. The $p>0$ case is a straightforward generalization of their work.

\begin{example}\label{ex:badU}
Consider the graded polynomial algebra
$A=\kk[x_1,x_2]$, $\mathrm{deg}(x_1)=\mathrm{deg}(x_2)=1 $,
equipped with the trivial $H$-action. Let $F$ be the free graded $A$-module on the set of generators
\[
 \mathbb{B}_1:=\{r_0,r_1,\dots,r_{p-2}, \  v_1, v_2,\  w\},
\]
with
\[
\deg(r_j)=2j,\qquad \deg(v_i)=-1,\qquad \deg(w)=-2 .
\]
Define the $p$-differential $\partial_F$ by
\begin{subequations}
\begin{equation}\label{eq:d-r}
\partial_F( r_j )=r_{j+1} \quad (j=0,\dots, p-3),
\qquad
\partial_F( r_{p-2})=0,
\end{equation}
\begin{equation}\label{eq:d-v-and-d-w}
\partial_F( v_1)=x_1 r_0\quad \quad \partial_F( v_2)=x_2 r_0\quad \quad
\partial_F( w)=r_0+x_2v_1-x_1v_2 .
\end{equation}
\end{subequations}
When $p=2$, the list $r_1,\dots,r_{p-2}$ is set to be empty and \eqref{eq:d-r} simply means $\partial_F( r_0)=0$.

We first verify that $\partial_F^p\equiv 0$ on $F$:
\[
\partial_F^2(w)
 =\partial_F( r_0)+(x_2x_1-x_1x_2)r_0
 =\partial_F( r_0 ) .
\]
Hence
\[
\partial_F^j(w)=r_{j-1}\qquad( j=2,\dots, p-1),
\qquad
\partial_F^p(w)=0.
\]
Similarly, $\partial_F^p(v_i)=0$ and $\partial_F^p(r_j)=0$.  Thus $F$ is a $p$-DG $A$-module.

Moreover, $F$ is finite-cell.  Indeed, ordering the generators as
\begin{equation}\label{eq:flag}
r_{p-2},\ r_{p-3},\dots,\ r_0,\ v_1, \ v_2,\ w
\end{equation}
makes the differential strictly upper triangular, so the associated flag is a finite cell filtration.

Now we make a change of basis in $F$. Let us introduce the element 
\begin{equation}
h:=x_2v_1-x_1v_2 \in Av_1\oplus Av_2.
\end{equation} 
Set
\begin{equation}\label{eq:newbasis}
t:=r_0+x_2 v_1-x_1v_2=\partial_F( w ),
\quad \textrm{and}\quad
u_i:=v_i-x_iw, \ \ i=1,2.
\end{equation}
Then
\begin{equation}
  \mathbb{B}_2:=  \{w, t,  \ r_1,\dots, r_{p-2},  \ u_1,  u_2\}
\end{equation}
constitutes a new basis for $F$, which satisfies
\begin{equation}\label{eq:h}
h= x_2v_1-x_1v_2
 = x_2(u_1+x_1w)-x_1(u_2+x_2w)
 =x_2u_1-x_1u_2 .
\end{equation} 
The inverse change of basis from $\mathbb{B}_2$ to $\mathbb{B}_1$ is given by
\[
v_1=u_1+x_1w, \qquad v_2=u_2+x_2w,
\qquad
r_0=t-h,
\]
while $r_1,\dots,r_{p-2}$ and $w$ are unchanged.

We compute the differential action on the new basis $\mathbb{B}_2$:
\begin{subequations}
\begin{equation}\label{eq:du}
\partial_F( u_1 )
 =x_1r_0-x_1t
 =-x_1 h,
 \quad 
 \partial_F( u_2 )
 =x_2r_0-x_2t
 =-x_2 h, \quad
\partial_F( h )
 =-(x_2x_1-x_1x_2)h=0 .
\end{equation}
\begin{equation}
    \partial_F(w)=t,\qquad
\partial_F^i(t)=r_i\quad (1\le i\le p-2),
\qquad
\partial_F^{p-1}(t)=0
\end{equation}
\end{subequations}
Define
\begin{equation}
    U:=Au_1\oplus Au_2.
\end{equation}
Then $U$ is a $p$-DG submodule of $F$; in fact its differential already satisfies $\partial_U^2=0$, and
\[
T:=Aw\oplus At\oplus Ar_1\oplus\cdots\oplus Ar_{p-2}
\]
is a $p$-DG direct summand complementary to $U$. As the $p$-differential on $T$ has the effect
\begin{equation}\label{eq:string}
w\xmapsto{\partial_T}t\xmapsto{\partial_T}r_1
\xmapsto{\partial_T}\cdots\xmapsto{\partial_T}r_{p-2}
\xmapsto{\partial_T}0,
\end{equation}
we deduce that $T\cong q^{-2}(A\otimes H)$ is contractible.

It follows that
\begin{equation}\label{eq:split}
F\cong U\oplus T
\end{equation}
as $p$-DG $A$-modules.  In particular, $U$ is cofibrant, since it is a direct summand of the finite-cell module $F$.

Observe that $U$ is $A$-free on $u_1,u_2$, all of degree $-1$,
\[
\bar U=U/(A_+U)\cong q^{-1}\kk^{\oplus 2}.
\]
The classical graded Nakayama's Lemma says that any minimal set of homogeneous generators of $U$ as a graded free $A$-module should have degree $-1$. 
We claim that $U$ has no cell filtration $0=F_0\subset F_1\subset F_2=U$. In fact, $U$ does not even have a rank-one $p$-DG submodule generated in degree $U_{-1}$.  
Suppose, on the contrary, that there is a $z=au_1+bu_2$, $a,b\in \kk$ not simultaneously zero, satisfying $\partial_U(z)\in Az$, then, for degree reasons, we need to have $\partial_U(z)=gz$ for some $g\in A_2$. As $A$ is an integral domain, $\partial_U^p = 0$ implies that $g^p=0$, and thus $g=0$. We then have
\begin{equation}\label{eq:dz}
0=\partial_U( z )
 =-(ax_1+bx_2)h.
\end{equation}
As $h$ is nonzero, this implies that $a=b=0$.

This computation shows that $U\notin\Pun$.
Nevertheless \eqref{eq:split} exhibits $U$ as ``stably cellular'', which will be defined below, with a single contractible free $H$-summand as complement.
\end{example}

\begin{rem}\label{rem:koszul-counterexample}
The construction is a $p$-DG extension of a twisted Koszul complex for the regular sequence $(x_1,x_2)$ in $A$:
\begin{equation}
    K_\bullet(x_1,x_2)=
    \left(
    0\to Ae_1\wedge e_2 \xrightarrow{d_2} Ae_1\oplus Ae_2 \xrightarrow{d_1} A\to 0
    \right)
\end{equation}
where
\begin{equation}
    d_2(e_1\wedge e_2)=x_2e_1-x_1e_2,\qquad d_1(e_1)=x_1, \qquad d_1(e_2)=x_2.
\end{equation}
Regard
\[
M:=\left(
\begin{gathered}
    \xymatrix{
 q^{-1}(Ae_1\oplus Ae_2) \\
 q^{-2}Ae_1\wedge e_2 \ar@{-->}[u]^{d_2}
}\end{gathered}
\right)
\]
as a filtered $p$-DG module over $A$ whose $p$-differential is given by
\[
\partial_M (fe_1\wedge e_2,g_1e_1,g_2e_2):=(0, x_2fe_1, -x_1fe_2) \qquad (f, \ g_1, \ g_2\in A),
\]
and regard $A$ as the $p$-DG $A$-module with the zero $p$-differential. Then the map $d_1^\prime$
\begin{equation}
d_1^\prime: 
\begin{gathered}
    \xymatrix{
q^{-1}(Ae_1 \oplus Ae_2)  \ar[dr]^{(x_1,x_2)} &  \\ 
 &  q^{-2}A\\
 q^{-2}Ae_1\wedge e_2 \ar@{-->}[uu]^{d_2 } \ar[ur]^{1} &
}
\end{gathered}
\end{equation}
is a $p$-DG module homomorphism, whose cocone is isomorphic to $(F,\partial_F)$ constructed in Example \ref{ex:badU}.

For $p=2$, this DG module is constructed in Avramov--Buchweitz--Iyengar \cite[Example 5.6]{ABI}.  The construction above is a straightforward $p$-DG generalization.
\end{rem}

\subsection{A stable notion} 
Example \ref{ex:badU} suggests that replacing literal cellularity by the following is more natural.

\begin{defn}\label{def:stably-cellular}
A finitely generated cofibrant module $U$ is \emph{stably cellular} if there exist $P,T\in\Pun$ such that
\begin{equation}\label{eq:stably-cellular}
U\oplus T\cong P
\end{equation}
as graded $B$-modules.
\end{defn}

Example \ref{ex:badU} is then stably cellular, and, in fact, the complement $T$ is contractible.  This motivates the stronger-looking but equivalent formulation for an
individual stably cellular module in which the stabilizing summand is required to be contractible.

\begin{prop}\label{prop:contractible-cellular}
Every contractible module $M\in \Ecat$ belongs to $\Pun$.
More precisely,
\[
M\cong
\bigoplus_{x,d}q^d(P_x\otimes H)^{\oplus m_{x,d}}
\]
for finitely many multiplicities $m_{x,d}$.
\end{prop}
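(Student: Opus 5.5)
We first show that $M$ is a strict $B$-summand of $q^{-\ell}(M\otimes H)$, and then that every module of the form $N\otimes H$ with $N$ finitely generated $A$-projective is a finite sum of shifted $P_x\otimes H$; Krull--Schmidt-type reasoning finishes the argument. Since $M\in\Ecat$ is contractible, it is projective-injective in the Frobenius category $\Ecat$ (Lemma~\ref{lem:frobenius}). Hence the $A$-split monomorphism $\lambda_M:M\to q^{-\ell}(M\otimes H)$ of \eqref{eq:shift} splits as a map of $B$-modules.

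Next I analyze $N\otimes H$ for an arbitrary graded $A$-module $N$. The standard untwisting isomorphism for Hopf module algebras identifies $N\otimes H$, with the diagonal $H$-action and $A$ acting on the first factor, with the module induced from $A$ to $B$. Explicitly, $N\otimes H\cong B\otimes_A N'$, where $N'$ is $N$ as an $A$-module and the $H$-structure is forgotten. The isomorphism is $n\otimes h\mapsto \sum h_{(2)}\otimes S^{-1}(h_{(1)})n$, or its variant using the antipode, which is bijective since $H$ is finite dimensional.

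Now the underlying $A$-module of $M$ is finitely generated graded projective by Lemma~\ref{lem:cofibrantcriterion}. Since $A_0$ is semisimple basic and $A$ is nonnegatively graded and locally finite, graded Nakayama gives $M\cong\bigoplus q^{d}P_x^{\oplus m_{x,d}}$ as $A$-modules. Applying induction, which is additive, yields
\[
q^{-\ell}(M\otimes H)\cong\bigoplus_{x,d} q^{d-\ell}(P_x\otimes H)^{\oplus m_{x,d}}.
\]
Here I use that $P_x\otimes H\cong B\otimes_A P_x$, the special case where the $H$-action on $P_x$ is the genuine one; the untwisting isomorphism does not care which $H$-structure was present.

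Finally, $M$ is a $B$-summand of this finite direct sum, so I need a Krull--Schmidt statement for graded $B$-modules. The degree-zero endomorphism algebra $\End_B(P_x\otimes H)=\Hom_A(P_x,P_x\otimes H)\cong (e_x A e_x\otimes H)$ is local in degree zero. Indeed, its degree-zero part is $\Hom^0_A(Ae_x,Ae_x\otimes H)\cong (e_xAe_x\otimes H)_0$, which is spanned by pieces $e_xA_ie_x\otimes H_{-i}$; since all gradings are nonnegative, this forces $i=0$, leaving $e_xA_0e_x\otimes H_0=\kk$. So each $q^d(P_x\otimes H)$ has endomorphism ring $\kk$ in degree zero and is indecomposable with local endomorphism ring. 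Moreover, all the degree-zero Hom spaces between these finitely many summands are finite dimensional by local finiteness. Hence the category of finite sums of such modules is Krull--Schmidt and closed under summands: an idempotent on the sum is a matrix over a finite-dimensional algebra whose quotient by the radical is a product of matrix algebras over $\kk$, and idempotents lift, so every summand is isomorphic to a subsum. This yields the decomposition of $M$.

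Membership in $\Pun$ then follows from Lemma~\ref{lem:Pun-property}(ii) with $V=H$. I expect the main obstacle to be carefully justifying this Krull--Schmidt step, in particular the finite-dimensionality and the locality of the degree-zero endomorphism rings. Getting the untwisting isomorphism right is routine.
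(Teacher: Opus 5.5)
Your proof is correct, but it takes a different route from the paper's.

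\textbf{The paper's argument.} The paper never passes through an induced module. It reduces modulo $A_+$ and notes that the factorization of $\Id_M$ through a relative projective-injective makes $\bar M$ a relative projective graded $A_0\otimes H$-module. Since $A_0$ is semisimple, $\bar M$ is then an honest projective, so $\bar M\cong\bigoplus q^d(A_0e_x\otimes H)^{\oplus m_{x,d}}$. The paper then chooses an $A_0\otimes H$-linear section $s:\bar M\to M$. Graded Nakayama shows that $a\otimes\bar m\mapsto a\,s(\bar m)$ is a $B$-isomorphism $A\otimes_{A_0}\bar M\to M$.

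\textbf{Your argument.} You use the tensor identity $N\otimes H\cong B\otimes_A N$, where $N$ on the right carries only its $A$-module structure. Your inverse formula with $S^{-1}$ is right, and $S$ is bijective because $H$ is finite-dimensional. This exhibits $M$ as a $B$-summand of $\bigoplus q^{d-\ell}Be_x$, using that $M\cong\bigoplus q^dP_x$ over $A$. You then apply Krull--Schmidt, based on $\End^0_B(Be_x)=(e_xBe_x)_0=e_xA_0e_x\otimes H_0=\kk$ and finite-dimensionality of the degree-zero Hom spaces.

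\textbf{What each buys.}
\begin{itemize}
\item Your route makes explicit that the contractible objects of $\Ecat$ are exactly the finitely generated graded-projective $B$-modules.
\item Your untwisting step explains cleanly why $M\otimes H$ depends only on the underlying $A$-module of $M$.
\item The cost of your route is the Krull--Schmidt step. It needs local finiteness of $A$, or at least semiperfectness of the degree-zero endomorphism ring of the sum.
\item Your route determines the multiplicities only implicitly, as a subsum.
\item The paper's head-lifting argument avoids both Krull--Schmidt and the untwisting isomorphism. It uses only semisimplicity of $A_0$, locality of $H$ and graded Nakayama.
\item The paper's argument reads the multiplicities $m_{x,d}$ off directly from the decomposition of the $A_0\otimes H$-module $\bar M$.
\end{itemize}
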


\begin{proof}
Recall the notation $\bar{M}=M/(A_+M)$ from
\eqref{eqn:Mbar}.
Since $M$ is finitely generated, $\bar M$ is a finite-dimensional graded
module over
\[
A_0\#H=A_0\otimes H.
\]

We first show that $\bar M$ is projective over $A_0\otimes H$.  Since
$M$ is contractible, $\Id_M$ factors through a relative
projective-injective module.  Passing to the quotient modulo $A_+$ gives a
factorization of $\Id_{\bar M}$ through a module of the form
$\bar N\otimes H$.  Thus $\bar M$ is a relative projective
$A_0\otimes H$-module.  Because $A_0$ is semisimple, relative projectives over
$A_0\otimes H$ are ordinary projectives.  Under our basicness assumption
$A_0=\bigoplus_x\kk e_x$, and since $H$ is graded local, every finitely
generated graded projective $A_0\otimes H$-module is a finite direct sum of
modules $q^d(A_0e_x\otimes H)$.  Hence
\begin{equation}\label{eq:head-contractible}
\bar M\cong
\bigoplus_{x,d}
q^d(A_0e_x\otimes H)^{\oplus m_{x,d}}.
\end{equation}

The quotient map $M\twoheadrightarrow\bar M$ is $A_0\otimes H$-linear.
Choose an $A_0\otimes H$-linear section
$s:\bar M\to M$ and define
\[
\Phi:A\otimes_{A_0}\bar M\longrightarrow M,
\qquad
a\otimes\bar m\longmapsto a\,s(\bar m).
\]
The map $\Phi$ is $B$-linear.  Indeed, for $h\in H$,
\[
\begin{aligned}
\Phi\bigl(h(a\otimes\bar m)\bigr)
&=\sum h_{(1)}(a)\,s(h_{(2)}\bar m)\\
&=\sum h_{(1)}(a)\,h_{(2)}s(\bar m)
=h\bigl(a\,s(\bar m)\bigr).
\end{aligned}
\]
Modulo $A_+$, $\Phi$ induces the identity of $\bar M$, so the graded
Nakayama Lemma implies that $\Phi$ is an isomorphism.
% Since $M$ is cofibrant, it is projective as a graded $A$-module.  Thus $\Phi$
% splits over $A$, say
% \[
% A\otimes_{A_0}\bar M\cong M\oplus K.
% \]
% Reducing modulo $A_+$ gives
% $\bar M\cong\bar M\oplus\bar K$, hence $\bar K=0$.
% The module $K$ is finitely generated, so graded Nakayama gives $K=0$.
% Consequently
% \[
% M\cong A\otimes_{A_0}\bar M.
% \]
Together with \eqref{eq:head-contractible} this yields
\[
M\cong
\bigoplus_{x,d}
q^d(P_x\otimes H)^{\oplus m_{x,d}}.
\]

Finally, each $P_x\otimes H$ belongs to $\Pun$ by Lemma \ref{lem:Pun-property}. Hence $M \in \Pun$. 
\end{proof}

\subsection{A characterization of stable cellularity}
Together with the Frobenius model of Section~\ref{subsec:Frob-model},
Proposition~\ref{prop:contractible-cellular} lets us detect stable cellularity
inside the derived category. The equivalent characterizations of stable cellularity below will be implicitly used throughout the rest of this paper. 

\begin{prop}\label{prop:stably-cellular-char}
Let $U$ be a finitely generated cofibrant $B$-module.  The following are
equivalent.
\begin{enumerate}
\item[(i)] $U$ is stably cellular: $U\oplus T\cong P$ for some $P,T\in\Pun$.
\item[(ii)] $U\oplus T\cong P$ for some $P\in\Pun$ and some \emph{contractible}
$T\in\Pun$.
\item[(iii)] The image of $U$ in $\Dc$ lies in $\Tcat$.
\end{enumerate}
\end{prop}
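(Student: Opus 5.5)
I will prove the cycle (ii)$\Rightarrow$(i)$\Rightarrow$(iii)$\Rightarrow$(ii). The first implication is trivial, since a contractible $T\in\Pun$ is in particular in $\Pun$.

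\textbf{(i)$\Rightarrow$(iii).} Suppose $U\oplus T\cong P$ with $P,T\in\Pun$. In $\Dc$, the object $U$ sits in a split triangle $T\to P\to U\xrightarrow{0}T[1]$. By Lemma~\ref{lem:T-triangulated}, $\Tcat$ is a strictly full triangulated subcategory, and $T,P\in\Tcat$. Hence $U$, being isomorphic to the cone of $T\to P$, lies in $\Tcat$.

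\textbf{(iii)$\Rightarrow$(ii).} This is the substantive direction, and I will run it through the Frobenius model of Lemma~\ref{lem:frobenius}.

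\begin{enumerate}
\item[(a)] Suppose the image of $U$ in $\Dc$ is isomorphic to some $P_0\in\Pun$. Both $U$ and $P_0$ lie in $\Ecat$, and $\Dc$ is equivalent to the stable category $\E(A,H)$ of the Frobenius category $\Ecat$. So $U\cong P_0$ in the stable category $\underline{\Ecat}$.

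\item[(b)] I then apply the standard Frobenius-category fact: two objects are stably isomorphic if and only if $U\oplus I\cong P_0\oplus J$ for some projective-injectives $I,J$. Concretely, take a stable isomorphism $f:U\to P_0$ with stable inverse $g$. The map $(f,\lambda_U):U\to P_0\oplus q^{-\ell}(U\otimes H)$ is an admissible monomorphism, since $\lambda_U$ is $A$-split. Its cokernel $C$ is stably zero, because $f$ is a stable isomorphism and the triangle has $C\simeq\mathrm{cone}(f)=0$. Therefore $C$ is contractible, hence projective-injective, and the conflation splits, giving
\[
P_0\oplus q^{-\ell}(U\otimes H)\cong U\oplus C.
\]

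\item[(c)] Set $T:=C$ and $P:=P_0\oplus q^{-\ell}(U\otimes H)$. The module $C$ is a finitely generated cofibrant contractible module, so Proposition~\ref{prop:contractible-cellular} gives $C\in\Pun$. The module $q^{-\ell}(U\otimes H)$ is contractible and lies in $\Ecat$, since $\Ecat$ is closed under tensoring with finite-dimensional $H$-modules; so Proposition~\ref{prop:contractible-cellular} again puts it in $\Pun$. As $\Pun$ is closed under finite direct sums, $P\in\Pun$, and $U\oplus T\cong P$ with $T$ contractible. This is (ii).
\end{enumerate}

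The main obstacle is step (b): justifying that a cokernel which is zero in the stable category is genuinely projective-injective in $\Ecat$, and that the conflation splits. Splitting follows because $C$ is injective in the exact structure. For the vanishing of $C$ I would argue through the triangle structure of $\E(A,H)$: the standard triangle attached to an admissible monomorphism is distinguished, and an object isomorphic to $0$ in the stable category has identity factoring through a projective-injective. That object is therefore a summand of a contractible module, which is contractible by Lemma~\ref{lem:frobenius}.
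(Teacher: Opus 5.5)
Your proof is correct and follows the paper's argument. The trivial implication and the split-triangle argument for (i)$\Rightarrow$(iii) are the same, and for (iii)$\Rightarrow$(ii) you make the same passage to the stable category of $\Ecat$ and finish with Proposition~\ref{prop:contractible-cellular}. The only difference is that the paper cites Heller's stable-isomorphism theorem to get $U\oplus T\cong Q\oplus J$ with $T,J$ projective-injective, whereas you prove that instance directly: you take $J=q^{-\ell}(U\otimes H)$ and $T=C$, the standard cone of $f$, which lies in $\Ecat$ as an $A$-split extension of $U[1]$ by $P_0$, exactly as in Lemma~\ref{lem:T-triangulated}.
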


\begin{proof}
(ii)$\Rightarrow$(i) is trivial.

(i)$\Rightarrow$(iii).  A decomposition $U\oplus T\cong P$ with $P,T\in\Pun$
gives a split distinguished triangle
\[
T\longrightarrow P\longrightarrow U\xrightarrow{\ 0\ }T[1]
\]
in $\Dc$, exhibiting $U$ as the cone of a morphism between objects of $\Tcat$.
Since $\Tcat$ is triangulated (Lemma~\ref{lem:T-triangulated}), $U\in\Tcat$.

(iii)$\Rightarrow$(ii).  Choose $Q\in\Pun$ whose image in $\Dc$ is isomorphic
to that of $U$; note $Q\in\Ecat$ by Lemma~\ref{lem:Pun-property}.  Both $U$ and
$Q$ are cofibrant, so by Lemma~\ref{lem:Hom} this isomorphism already holds in
$\Cat(A,H)$, that is, in the stable category $\E (A,H)$ of the Frobenius
category $\Ecat$ (Lemma~\ref{lem:frobenius}).  Heller's stable-isomorphism
theorem \cite[Theorem 2.2]{Heller} therefore provides projective-injective objects $T,J\in\Ecat$ with
\begin{equation}\label{eq:heller}
U\oplus T\cong Q\oplus J
\end{equation}
as graded $B$-modules.  The projective-injective objects of $\Ecat$ are precisely
the finitely generated contractible cofibrant modules, so
Proposition~\ref{prop:contractible-cellular} gives $T,J\in\Pun$. Let $P:=Q\oplus J\in\Pun$. Then
\eqref{eq:heller} is the required decomposition.
\end{proof}

The point of the equivalence is that (iii) is a condition on $\Tcat$ rather
than on the individual module. These equivalent conditions will hold for all finitely generated
cofibrants under either sufficient gap in Theorems~\ref{thm:window-tstructure} and~\ref{thm:gap-local}; they fail in
general by Proposition~\ref{prop:gap-counterexamples}.

\begin{cor}\label{cor:reformulation}
The following conditions are equivalent.
\begin{enumerate}
\item[(i)] Every finitely generated cofibrant $B$-module is stably cellular.
\item[(ii)] $\Tcat=\Dc$.
\item[(iii)] $\Tcat$ is closed under direct summands in $\Dc$.
\end{enumerate}
\end{cor}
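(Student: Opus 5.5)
I would prove the corollary through the cycle (i)$\Rightarrow$(ii)$\Rightarrow$(iii)$\Rightarrow$(i), using Lemma~\ref{lem:finite-cofibrant-model} and Proposition~\ref{prop:stably-cellular-char} as the only real inputs.

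For (i)$\Rightarrow$(ii), take any object $X$ of $\Dc$. By Lemma~\ref{lem:finite-cofibrant-model}, $X$ is represented by a finitely generated cofibrant $B$-module $U$, namely a strict summand of a finite-cell module. Hypothesis (i) says $U$ is stably cellular. Then Proposition~\ref{prop:stably-cellular-char}, (i)$\Rightarrow$(iii), puts the image of $U$ in $\Tcat$. Since $\Tcat$ is strictly full, $X\in\Tcat$. The reverse inclusion $\Tcat\subseteq\Dc$ always holds, so $\Tcat=\Dc$.

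For (ii)$\Rightarrow$(iii), it suffices to note that $\Dc$ is closed under direct summands in $\Der(A,H)$, so in particular in itself. Hence if $\Tcat=\Dc$, then $\Tcat$ is closed under direct summands in $\Dc$.

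For (iii)$\Rightarrow$(i), let $U$ be a finitely generated cofibrant $B$-module. Its image lies in $\Dc$, since finitely generated cofibrants are compact; this was recorded in the proof of Lemma~\ref{lem:frobenius}. By Lemma~\ref{lem:finite-cofibrant-model} and the description of $\Dc$ recalled before it, every compact object is a derived direct summand of an object of $\Tcat$. Hypothesis (iii) then places the image of $U$ in $\Tcat$. Proposition~\ref{prop:stably-cellular-char}, (iii)$\Rightarrow$(i), now gives that $U$ is stably cellular.

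No step is a genuine obstacle. The only point needing care is in (iii)$\Rightarrow$(i): compactness of an arbitrary finitely generated cofibrant module, as opposed to one already known to be a summand of a finite-cell module. This is available through Lemma~\ref{lem:frobenius}, which identifies the stable category of $\Ecat$ with $\Dc$. Alternatively, it follows from Lemma~\ref{lem:cofibrantcriterion} together with finite generation.
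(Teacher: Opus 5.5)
Your proof is correct and is essentially the paper's argument. The paper also gets (i)$\Leftrightarrow$(ii) from Proposition~\ref{prop:stably-cellular-char}, and proves (iii)$\Rightarrow$(ii) by the same density argument via Lemma~\ref{lem:finite-cofibrant-model}; you simply fold that density step into a direct (iii)$\Rightarrow$(i) and arrange the implications as one cycle.
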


\begin{proof}
By Lemma~\ref{lem:frobenius} every object of $\Dc$ is isomorphic to the image
of a finitely generated cofibrant module, so
(i)$\Leftrightarrow$(ii) is the equivalence of
(i)$\Leftrightarrow$(iii) of Proposition~\ref{prop:stably-cellular-char}.

(ii)$\Rightarrow$(iii) is trivial.  Conversely, by
Lemma \ref{lem:finite-cofibrant-model} every object of $\Dc$ is a direct
summand of a finite-cell module; that is, $\Tcat$ is a dense
triangulated subcategory of $\Dc$.  A dense triangulated subcategory closed
under direct summands is the whole category. Thus (iii)$\Rightarrow$(ii) follows.
\end{proof}

\begin{rem}\label{rem:Fimageclosed}
Example~\ref{ex:badU} shows that the additive category $\Pun$ is not
idempotent complete.  Corollary~\ref{cor:reformulation} isolates what is
actually at stake: whether its image $\Tcat$ becomes idempotent complete in
the derived category.  Theorems~\ref{thm:window-tstructure} and~\ref{thm:gap-local} prove this under their respective grading gaps, whereas
Proposition~\ref{prop:gap-counterexamples} shows that positivity alone
is insufficient.
\end{rem}

\section{Simple-minded windows, t-structures and weights}
\label{sec:yamaura}
Throughout this section, we retain the assumptions on $A$ and $H$ from
Section~\ref{sec:prelim}; in particular, $A_0$ is split semisimple and
$H$-trivial.   The first two subsections recall the generation and support
estimates from Yamaura's construction without imposing a gap on $A$.
Under the gap condition $A_d=0$ for $0<d<\ell$, the subsequent general
arguments give a bounded $t$-structure on $\Dc$, derived cellularity,
and a weight structure on $\Der(A,H)$.  The distinction from silting and
bounded weights on compacts is also categorical.  The $p$-DG case and
its explicit formulas are collected only afterward in
Section~\ref{sec:window-pdg}.

For an object $G$ in a triangulated category, $\thick(G)$ denotes its
thick (summands) closure, $\tria(G)$ its triangulated closure without adjoining
summands, and $\add(G)$ the category of summands of finite direct sums
of $G$.

\subsection{The stable category case}
By Lemma \ref{lem:socle}, $H$ is a positively graded self-injective algebra of Gorenstein parameter $\ell$, and $H_0=\kk$ has global dimension zero, so Yamaura's construction applies to the stable category $H\udgmod$ of finite-dimensional graded $H$-modules \cite[Theorem 3.3]{Yamaura}.

For $0\le i<\ell$ define
\begin{equation}\label{eq:YamauraVi}
V_i:=q^{-i}(H/H_{>i}),
\qquad
V:=\bigoplus_{i=0}^{\ell-1}V_i .
\end{equation}
Thus $V_i$ is supported in degrees $-i,-i+1,\dots,0$, and is cyclic with generator the image of $1$ in its lowest degree $-i$.  In Yamaura's notation $V_i=H(i)_{\le 0}$, and $V$ is the nonprojective part of his tilting object
\[
T=\bigoplus_{i\ge0}H(i)_{\le0} .
\]
For $i\ge\ell$ the truncation $H(i)_{\le0}=q^{-i}H$ is a grading shift of the regular module and hence vanishes stably, while for $i<\ell$ the module $V_i$ is a proper cyclic quotient of $H$, hence indecomposable (it has simple top, $H$ being graded local) and nonprojective.  

\begin{thm}\label{thm:Yamaura}
The $H$-module $V$ is a tilting object in $H\udgmod$. In particular,
\begin{equation}\label{eq:Yamaura-generation}
\thick_{H\udgmod}(V)=H\udgmod .
\end{equation}
\end{thm}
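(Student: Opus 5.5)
The plan is to deduce the statement from Yamaura's theorem \cite[Theorem~3.3]{Yamaura}, after checking its hypotheses and matching notation. The hypotheses are three. First, $H$ is a finite-dimensional positively graded self-injective algebra. Second, it has Gorenstein parameter $\ell$, meaning its socle is concentrated in degree $\ell$; this is exactly Lemma~\ref{lem:socle}. Third, $H_0=\kk$ has finite global dimension. Yamaura then says that $T=\bigoplus_{i\ge0}H(i)_{\le0}$ is a tilting object in $H\udgmod$. With the shift convention $(q^nM)_i=M_{i-n}$, the summand $H(i)_{\le 0}$ is $q^{-i}H$ truncated to degrees $\le0$, which is $q^{-i}(H/H_{>i})=V_i$. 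The step to watch is the sign convention of Yamaura's shift $(i)$ versus $q^{-i}$; the support computation in degrees $[-i,0]$ fixes it. One also has to confirm that the truncation is a quotient (not a submodule), since $H_{>i}$ is an ideal.

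Next I remove the summands that vanish stably. For $i\ge\ell$ the truncation is all of $q^{-i}H$, which is projective and hence zero in $H\udgmod$. So $T\cong V$ in the stable category, and $T$ is a finite sum there. Tilting is a property of the isomorphism class of the object: $\Hom_{H\udgmod}(V,V[n])=0$ for $n\ne0$, together with thick generation. Hence $V$ is tilting, and \eqref{eq:Yamaura-generation} is part of the definition.

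If one wants a self-contained argument for generation rather than just citing it, I would argue as follows. Every $q^n\kk$ lies in $\thick(V)$, and finite-dimensional modules have finite filtrations by shifts of $\kk$, whose short exact sequences become triangles. So it suffices to handle the shifts of $\kk$. The module $V_0=\kk$ is present. Each short exact sequence
\[
0\to q^{-i}(H_{i}\text{-part})\to V_i\to q^{-(i-1)}\text{-truncation}\to0
\]
relates consecutive truncations with semisimple layers. The syzygy relation $\kk[-1]=H_+$ and $\kk[1]=q^{-\ell}(H/\kk\Lambda)$ then moves the degree window by $\ell$, so all $q^n\kk$ are reached. The main obstacle is the vanishing $\Hom(V_i,V_j[n])=0$ for $n\neq0$; for that step I would rely on Yamaura's theorem rather than reprove it.
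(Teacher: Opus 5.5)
Your proposal follows the paper's proof. The paper also proves this by citing \cite[Theorem~3.3]{Yamaura}: it checks graded self-injectivity, Gorenstein parameter $\ell$ via Lemma~\ref{lem:socle}, and $\operatorname{gl.dim}H_0=0$, identifies $V_i=H(i)_{\le0}$, and discards the stably zero summands with $i\ge\ell$, all as you do. The one hypothesis you do not check is that Yamaura works with right modules over an algebraically closed field; the paper handles this in Remark~\ref{rem:yamaura-hypotheses}, noting that the generation argument uses only finite filtrations by graded simples and $\operatorname{gl.dim}H_0<\infty$, so it holds over an arbitrary $\kk$.
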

\begin{proof}
See \cite[Theorem 3.3]{Yamaura}. 
\end{proof}

\begin{rem}\label{rem:yamaura-hypotheses}
Yamaura works with right modules over an algebraically closed field.  Neither restriction is needed here.  Passing between left and right modules is harmless because the statements used are self-dual in the relevant sense, and the proof of \eqref{eq:Yamaura-generation} \cite[Lemma 3.5]{Yamaura} uses only that every graded module has a finite filtration by graded simples and that $\operatorname{gl.dim}H_0<\infty$; with $H_0=\kk$ both hold over an arbitrary ground field.
\end{rem}

Yamaura's normalization gives the following elementary support estimate.

\begin{lem}\label{lem:positive-syzygies}
For $0\le i<\ell$ and $j>0$, the graded $H$-module $V_i[-j]$ is isomorphic to an object supported in strictly positive degrees in $H\udgmod$.  Consequently
\begin{equation}\label{eq:H-positive-bound}
\Hom_{H\udgmod}^{r}(V_a,V_b[-j])=0
\qquad(j>0,\ r\le0).
\end{equation}
\end{lem}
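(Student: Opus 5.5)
Iterate the inverse shift \eqref{eq:shift-inverse}, which is realized by the kernel of the augmentation. For any graded $H$-module $M$, the map $\varepsilon_M:M\otimes H\to M$ is surjective with kernel $M\otimes H_+$, and $M\otimes H$ is projective-injective. So $M[-1]\cong M\otimes H_+$ stably. Instead of this fat model, I would use the minimal syzygy $\Omega M$, the kernel of a projective cover $P(M)\to M$. It agrees with $M[-1]$ in $H\udgmod$ up to projective summands. Then $V_i[-j]\cong\Omega^j V_i$, and it suffices to show $\Omega^j V_i$ is supported in degrees $\ge 1$ for $j\ge1$.

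The degree bound is proved by induction on $j$, using the following claim. If $M$ is a nonprojective module with no projective summands, generated in degrees $\ge g$ and supported in degrees $\le g'$, then $\Omega M\subseteq P(M)$ lies in degrees $\ge g+1$. Indeed, $P(M)=\oplus q^{d}H$ with all generator degrees $d\ge g$, and $H_0=\kk$ maps isomorphically onto the top. So the kernel avoids the generator degrees in a controlled way, and at least it lies in degrees $>\min d$.

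For $j=1$: $V_i$ is cyclic generated in degree $-i$, so $P(V_i)=q^{-i}H$ and $\Omega V_i=q^{-i}H_{>i}$. This is supported in degrees $\ge -i+(i+1)=1$. For the inductive step: if $\Omega^jV_i$ is supported in degrees $\ge1$, its generators lie in degrees $\ge1$. Its projective cover is then a sum of $q^dH$ with $d\ge1$, which is supported in degrees $\ge1$, and so is the kernel $\Omega^{j+1}V_i$. In fact positivity of support is all that is needed: submodules of modules supported in degrees $\ge1$ stay there. So the induction is immediate once $j=1$ is done, and the potential obstacle, namely control of generator degrees, disappears.

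For the consequence \eqref{eq:H-positive-bound}, compute $\Hom^{r}_{H\udgmod}(V_a,N)$ with $N:=\Omega^jV_b$ supported in degrees $\ge1$. A degree-$r$ map sends the generator of $V_a$, in degree $-a$, to $N_{r-a}$. Since $V_a$ is supported in degrees $[-a,0]$, its image lies in degrees $\le r\le0$. But $N$ vanishes in degrees $\le 0$, so already $\Hom_H^r(V_a,N)=0$. The stable Hom is a quotient of this, so it vanishes too. The only point to check carefully is the sign convention, namely that $\Hom^r$ raises degree by $r$, as defined in Section~\ref{sec:prelim}. With $r\le0$ and the target vanishing in degrees $\le0$, this holds.
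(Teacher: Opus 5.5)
Your proposal is correct and follows essentially the same route as the paper: the projective cover $q^{-i}H\to V_i$ with kernel $q^{-i}H_{>i}$ handles $j=1$, and induction uses that a positively supported module has a projective cover by shifts $q^dH$ with $d>0$. The Hom vanishing then holds already for honest degree-$r$ module maps, for degree reasons. The generator-degree claim in your second paragraph is unnecessary, as you note yourself, and can simply be dropped.
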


\begin{proof}
The canonical projective cover
\[
q^{-i}H \to V_i=q^{-i}(H/H_{>i})
\]
has kernel $q^{-i}H_{>i}$, whose support is contained in degrees $\{1,\dots,\ell-i\}$.  Thus $ V_i[-1]$ is supported in strictly positive degrees.

If a finite-dimensional graded $H$-module $W$ is supported in strictly positive degrees, then its graded projective cover is a finite direct sum of shifts $q^dH$ with $d>0$, since the degrees of the generators are the degrees occurring in the top $W/H_+W$.  The kernel is therefore again supported in strictly positive degrees.  Induction gives the assertion for all $V_i[-j]$.

As $V_a$ is supported in degrees $\le0$, a homogeneous map of degree $t\le0$ from $V_a$ to a module supported in strictly positive degrees is zero.  This proves \eqref{eq:H-positive-bound}, even before passing to stable morphisms.
\end{proof}

\begin{rem}\label{rem:one-direction}
It is shown in \cite[Lemma 3.4]{Yamaura} that the stronger vanishing result 
\[
\Hom_{H\udgmod}^{\,0}(V_a,V_b[j])=0 \quad (\forall j\ne 0),
\]
holds in $H\udgmod$, i.e., $V$ is tilting and not merely silting. This is a step in the proof of Theorem \ref{thm:Yamaura}, which can also be regarded as a consequence of the theorem.  After tensoring with $A$ only the direction \eqref{eq:H-positive-bound} survives, because the terms with $A$-degree $d>0$ may contribute to $\Hom_{\Der(A,H)}^{\,0}(P_x\otimes V_a, P_y\otimes V_b[j])$, and these do not vanish in general for $j>0$.  This leaves a positive generating family, but does not by itself
supply a weight structure; the sign issue is explained in
\S\ref{sec:sign-obstruction}.
\end{rem}

\subsection{A positive generating family in
\texorpdfstring{$\Der(A,H)$}{D(A,H)}}\label{sec:positivity-general}
For each $x\in X$ and $0\le i<\ell$ set
\begin{equation}\label{eq:Exi}
E_{x,i}:=P_x\otimes V_i,
\qquad
E:=\bigoplus_{x\in X}\bigoplus_{i=0}^{\ell-1}E_{x,i} .
\end{equation}

\begin{lem}\label{lem:A-degree-general}
For any $x,y\in X$, $0\le a,b<\ell$, and $j>0$, the following conditions hold:
\begin{enumerate}
\item[(i)] For every internal degree $r\le0$,
$\Hom_{\Der(A,H)}^r(E_{x,a},E_{y,b}[-j])=0$;
\item[(ii)] $\End_{\Der(A,H)}(E_{x,a})=\kk$;
\item[(iii)] $\Hom_{\Der(A,H)}(E_{x,a},E_{y,b})$ is finite dimensional.
\end{enumerate}
\end{lem}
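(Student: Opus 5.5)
The plan is to reduce every morphism space to a computation of $H$-module maps out of the finite-dimensional cyclic modules $V_a$, and then to use degree support. First, $E_{x,a}=P_x\otimes V_a$ lies in $\Pun$ by Lemma~\ref{lem:Pun-property}(ii), so it is cofibrant. By Lemma~\ref{lem:Hom}, each derived morphism space out of it is a quotient of $\Hom^r_B(E_{x,a},N)$. For an arbitrary graded $B$-module $N$, I will use the adjunction isomorphism of graded $H$-modules
\[
\HOM_A(Ae_x\otimes V_a,N)\cong \HOM_\kk(V_a,e_xN),
\qquad f\mapsto\bigl(v\mapsto f(e_x\otimes v)\bigr).
\]
Here $e_xN$ is an $H$-submodule of $N$ because $e_x$ is $H$-invariant (Remark~\ref{rem:P3}). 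Taking $H$-invariants via \eqref{eq:H-invariants-in-HOM} gives $\Hom_B^r(E_{x,a},N)\cong\Hom_H^r(V_a,e_xN)$. I expect checking that this adjunction is compatible with the $H$-action of \cite[Definition~5.1]{QYHopf} to be routine but the fussiest point; it is the usual Hom–tensor adjunction for Hopf module algebras.

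For (i), take $N=E_{y,b}[-j]$. By \eqref{eq:shift-inverse}, iterated, the shift is computed on the $H$-factor, so $N\cong P_y\otimes W$, where $W$ is a model of $V_b[-j]$. By the proof of Lemma~\ref{lem:positive-syzygies}, $W$ may be chosen as an honest iterated kernel supported in strictly positive degrees. Then $e_xN=e_xAe_y\otimes W$, with $H$ acting diagonally. Because $A\ge0$, this module is supported in degrees $\ge1$. Meanwhile $V_a$ is supported in degrees $[-a,0]$, so any homogeneous map of degree $r\le0$ from $V_a$ to $e_xN$ vanishes. Hence $\Hom^r_B(E_{x,a},N)=0$, and the derived Hom, being a quotient of it by Lemma~\ref{lem:Hom}, is also $0$.

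For (ii) and (iii), take $N=E_{y,b}$, so $e_xN=e_xAe_y\otimes V_b$. The module $V_a$ is cyclic, generated by $\bar1$ in degree $-a$ with annihilator $H_{>a}$. A degree-zero $H$-map $V_a\to e_xN$ is therefore determined by the image of the generator, which is an element of $(e_xAe_y\otimes V_b)_{-a}=\bigoplus_{d\ge0}e_xA_de_y\otimes(V_b)_{-a-d}$. Only the finitely many $d\le\ell$ contribute, and each $A_d$ is finite-dimensional, so this space is finite-dimensional; this gives (iii).

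When $y=x$ and $b=a$, the module $V_a$ vanishes in degrees below $-a$, so only the $d=0$ summand survives. Since $e_xA_0e_x=\kk$, the degree $-a$ part is $\kk\cdot(e_x\otimes\bar1)$, and thus $\End_B(E_{x,a})$ is at most one-dimensional, spanned by the identity. It remains to see that the identity is not killed in $\Der(A,H)$, that is, that $E_{x,a}$ is not contractible. If it were, then reducing modulo $A_+$ would make $\bar E_{x,a}=\kk e_x\otimes V_a\cong V_a$ projective over $H$, as in the proof of Proposition~\ref{prop:contractible-cellular}. This contradicts the fact that $V_a$ is a proper cyclic quotient of $H$ for $a<\ell$. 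Hence $\End_{\Der(A,H)}(E_{x,a})=\kk$.
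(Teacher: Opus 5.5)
Your proposal is correct. For (i) and (iii) it follows the paper's argument. Your identification $\Hom_B^r(P_x\otimes V_a,N)\cong\Hom_H^r(V_a,e_xN)$ is the $H$-invariant part of the paper's $\HOM_A(P_x\otimes V_a,P_y\otimes W)\cong e_xAe_y\otimes V_a^*\otimes W$. Both arguments then kill the numerator of \eqref{eq:HOM-formula} by degree support. In (iii), your cyclic-generator bound on $d$ is a little sharper than the paper's $d\le b$, but either suffices.

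The real difference is in (ii). Both arguments first get $\End_B^0(E_{x,a})=\kk\cdot\Id$ the same way. The paper then kills the denominator $\Lambda\cdot\Hom_A^{-\ell}(E_{x,a},E_{x,a})$ directly: $e_xAe_x\otimes V_a^*\otimes V_a$ is supported in degrees $\ge -a>-\ell$, so it has no degree $-\ell$ part. You instead show that the identity is not null-homotopic. You reduce modulo $A_+$ as in Proposition~\ref{prop:contractible-cellular}, which would force the head $V_a$ of a contractible $E_{x,a}$ to be $H$-projective. That is impossible for a nonzero proper cyclic quotient of the local algebra $H$.

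The paper's route is a one-line degree count that uses the same support template as (i) and (iii) and needs nothing about contractible modules. Your route imports the reduction argument, but its conclusion does not depend on the degree window. It shows that $P_x\otimes V$ is never contractible when $V$ is a nonprojective $H$-module. Either way the quotient of the one-dimensional space $\End_B^0(E_{x,a})$ is $\kk$.
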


\begin{proof}
For (i), choose a finite-dimensional graded $H$-module $W$, supported in
strictly positive degrees, representing $V_b[-j]$ in $H\udgmod$, as in
Lemma~\ref{lem:positive-syzygies}.  Then
$E_{y,b}[-j]\cong P_y\otimes W$ in $\Der(A,H)$.  Since
$P_x\otimes V_a$ and $P_y\otimes W$ are cofibrant, Lemma~\ref{lem:Hom} gives
\begin{equation}\label{eq:Hom-as-stableHom}
\Hom^{r}_{\Der(A,H)}(E_{x,a},E_{y,b}[-j])
\cong
\dfrac{\Hom^r_B(P_x\otimes V_a,P_y\otimes W)}
{\Lambda\cdot
\Hom_A^{r-\ell}(P_x\otimes V_a,P_y\otimes W)} .
\end{equation}
As graded vector spaces,
\begin{equation}\label{eq:A-Hom-degree}
\HOM_A(P_x\otimes V_a,P_y\otimes W)
\cong
e_xAe_y\otimes V_a^*\otimes W.
\end{equation}
The three tensor factors on the right are supported in degrees $\ge0$, $\ge0$, and
$>0$, respectively.  Hence the right-hand side of
\eqref{eq:A-Hom-degree} is supported in strictly positive degrees.  Therefore
\[
\Hom_A^r(P_x\otimes V_a,P_y\otimes W)=0
\qquad(r\le0),
\]
and in particular the numerator of \eqref{eq:Hom-as-stableHom} vanishes.
This proves (i).

For (ii), take $j=0$ and $r=0$ in Lemma~\ref{lem:Hom}.  The module $V_a$ is
cyclic, generated in its lowest degree $-a$, and
\[
(E_{x,a})_{-a}
=
A_0e_x\otimes (V_a)_{-a}
\cong\kk.
\]
Thus every degree-zero $B$-endomorphism of $E_{x,a}$ is a scalar multiple of
the identity:
\[
\End_B^0(E_{x,a})=\kk.
\]
Moreover,
\[
\HOM_A(E_{x,a},E_{x,a})
\cong e_xAe_x\otimes V_a^*\otimes V_a
\]
is supported in degrees $\ge-a$.  Since $a<\ell$, its degree $-\ell$ part is
zero.  Hence the denominator in \eqref{eq:HOM-formula} vanishes for $r=0$,
and
\[
\End_{\Der(A,H)}(E_{x,a})=\kk.
\]

For (iii), it is enough to note that
\[
\Hom_B^0(E_{x,a},E_{y,b})
\subseteq
\Hom_A^0(E_{x,a},E_{y,b}).
\]
Using
\[
\HOM_A(E_{x,a},E_{y,b})
\cong e_xAe_y\otimes V_a^*\otimes V_b,
\]
the degree-zero part involves only $A_d$ with $0\le d\le b$.  Since $A$ is
locally finite and $V_a,V_b$ are finite dimensional,
$\Hom_A^0(E_{x,a},E_{y,b})$ is finite dimensional.  The same is therefore
true after passing to the quotient defining
$\Hom_{\Der(A,H)}(E_{x,a},E_{y,b})$.
\end{proof}

\begin{lem}\label{lem:generation-general} The object $E$ is a thick generator of $\Dc$:
\[
\thick(E)=\Dc .
\]
\end{lem}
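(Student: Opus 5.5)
The inclusion $\thick(E)\subseteq\Dc$ is immediate. Each $E_{x,i}=P_x\otimes V_i$ lies in $\Pun$ by Lemma~\ref{lem:Pun-property}(ii), hence is compact, and $\Dc$ is thick. For the reverse inclusion, recall from \cite[Section~7.2]{QYHopf} that $\Dc$ is the thick closure of the shifted cells $\{q^nP_x: n\in\mathbb Z,\ x\in X\}$. It therefore suffices to show $q^nP_x\in\thick(E)$ for all $n$ and $x$.

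To do this, transport Yamaura's generation result (Theorem~\ref{thm:Yamaura}) along the module action \eqref{eq:H-action-triangulated}. Fix $x$ and consider
\[
\Phi_x:=P_x\otimes(-):H\udgmod\longrightarrow\Der(A,H).
\]
This is well defined because tensoring preserves $A$-split short exact sequences. It is a triangulated functor: it sends $\lambda_W$ to $\lambda_{P_x\otimes W}$ via the associativity $(P_x\otimes W)\otimes H\cong P_x\otimes(W\otimes H)$, so it commutes with the shift \eqref{eq:shift} and carries cones to cones. It also sends contractible $H$-modules, i.e.\ sums of shifts of $H$, to contractible $B$-modules $P_x\otimes q^dH$, which vanish in $\Der(A,H)$. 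Being exact, $\Phi_x$ preserves finite sums and direct summands. Hence the full subcategory $\{W\in H\udgmod:\Phi_x(W)\in\thick(E)\}$ is triangulated, closed under summands, and contains $V$. By \eqref{eq:Yamaura-generation} it is all of $H\udgmod$. Applying this to $W=q^n\kk$ gives $\Phi_x(q^n\kk)=P_x\otimes q^n\kk\cong q^nP_x\in\thick(E)$, using the isomorphism noted in the proof of Lemma~\ref{lem:Pun-property}.

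The only step requiring care is checking that $\Phi_x$ is genuinely exact on the triangulated level, not merely additive. Concretely, one must verify that the standard triangle of a morphism $f:W\to W'$ in $H\udgmod$, built from the pushout along $\lambda_W$, is carried to the standard triangle of $\Id_{P_x}\otimes f$ built in $\Der(A,H)$ as in the proof of Lemma~\ref{lem:T-triangulated}. This follows because $P_x\otimes-$ commutes with pushouts and with the identification $M\otimes H\cong$ free $H$-module. If one prefers to avoid this verification, there is an alternative. Yamaura's Lemma~3.5 shows directly that each $q^n\kk$ lies in the thick closure of $V$ by an explicit finite sequence of cones and summands of $H$-modules, namely simple filtrations and truncations, all $\kk$-split. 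Tensoring that sequence with $P_x$ produces $A$-split sequences of $B$-modules, which are distinguished triangles in $\Der(A,H)$, and this gives $q^nP_x\in\thick(E)$ directly.
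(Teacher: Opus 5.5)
Your proof is correct and follows essentially the same route as the paper: you transport Yamaura's generation $\thick(V)=H\udgmod$ along the exact functor $P_x\otimes(-)$ to get $q^nP_x\in\thick(E)$, then use that $\Dc$ is the thick closure of the shifted cells (the paper phrases this last step through $q^nA=\bigoplus_x q^nP_x$). Your explicit check that $P_x\otimes(-)$ is triangulated, the preimage-of-a-thick-subcategory argument, and the easy inclusion $\thick(E)\subseteq\Dc$ spell out what the paper compresses into ``using the exact module-category action.''
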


\begin{proof}
By Yamaura's Theorem \eqref{eq:Yamaura-generation}, every grading shift $q^n\kk$ belongs to $\thick(V)$ in $H\udgmod$.  Tensoring with $P_x$ and using the exact module-category action gives
\[
q^nP_x=P_x\otimes q^n\kk\in\thick(E)
\qquad(n\in\mathbb Z,\ x\in X).
\]
On the other hand, because $H_0=\kk$, the graded simple $H$-modules are exactly the shifts $q^n\kk$. The compact derived category $\Dc$ is the smallest strictly full triangulated subcategory closed under direct summands containing the objects $q^n(A\otimes\kk)=q^nA$ \cite[Proposition 7.6 and Corollary 7.15]{QYHopf}.  Since $A=\oplus_x P_x$ by \eqref{eq:idempotents} and $\thick$ is closed under direct summands, $\thick\{q^nA\}=\thick\{q^nP_x\}$.  Hence $\thick(E)=\Dc$.
\end{proof}

\begin{cor}\label{cor:positive-data-general}
The object $E$ satisfies:
\begin{enumerate}
\item[(i)] $\Hom_{\Dc}(E,E[j])=0$ for every $j<0$;
\item[(ii)] $R:=\End_{\Dc}(E)$ is finite dimensional, hence semiperfect;
\item[(iii)] each $E_{x,i}$ is indecomposable, with endomorphism ring $\kk$;
\item[(iv)] $\thick(E)=\Dc$.
\end{enumerate}
In particular, the displayed $E_{x,i}$ form a complete family of
indecomposable summands of $E$, and every object of $\add(E)$ is a finite
direct sum of copies of the $E_{x,i}$.
\end{cor}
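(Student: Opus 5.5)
Each of (i)--(iv) follows from Lemmas~\ref{lem:A-degree-general} and~\ref{lem:generation-general}; the plan is to check them in turn and then deduce the final Krull--Schmidt statement.

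For (i), write $E=\bigoplus E_{x,a}$, so that $\Hom_{\Dc}(E,E[j])$ is a finite direct sum of the spaces $\Hom_{\Der(A,H)}(E_{x,a},E_{y,b}[j])$. For $j<0$ put $j=-j'$ with $j'>0$. Then Lemma~\ref{lem:A-degree-general}(i), applied in internal degree $r=0$, shows that each summand vanishes. Since $E$ is compact, morphisms computed in $\Dc$ agree with those computed in $\Der(A,H)$.

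For (ii), the index set $X\times\{0,\dots,\ell-1\}$ is finite. So $R$ is a finite direct sum of the spaces $\Hom(E_{x,a},E_{y,b})$, each of which is finite dimensional by Lemma~\ref{lem:A-degree-general}(iii). A finite-dimensional algebra over a field is artinian, hence semiperfect.

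For (iii), Lemma~\ref{lem:A-degree-general}(ii) gives $\End(E_{x,a})=\kk$. This ring is local, so its only idempotents are $0$ and $1$, and hence $E_{x,a}\ne 0$ is indecomposable. To see that $E_{x,a}\neq0$, note that its endomorphism ring is $\kk$ and not $0$. Part (iv) is exactly Lemma~\ref{lem:generation-general}.

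For the final statement, I would use the Krull--Schmidt property of $\add(E)$. The category $\Dc$ is idempotent complete (Lemma~\ref{lem:frobenius}), and $R$ is semiperfect. By the standard projectivization equivalence, $\add(E)$ is equivalent to the category of finitely generated projective $R$-modules via $\Hom(E,-)$. In that category every object is a finite direct sum of indecomposables with local endomorphism rings, uniquely up to isomorphism. The decomposition $E=\bigoplus E_{x,i}$ is already such a decomposition, since the summands have local endomorphism rings $\kk$. Consequently every indecomposable object of $\add(E)$ is isomorphic to some $E_{x,i}$, and every object of $\add(E)$ is a finite direct sum of copies of the $E_{x,i}$.

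I expect no serious obstacle here. The only point needing care is this last step, namely justifying that summands of finite sums of $E$ decompose uniquely. Idempotent completeness of $\Dc$ together with the semiperfectness of $R$ handles it.
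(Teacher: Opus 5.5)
Your proposal is correct and matches the paper's proof almost line by line: (i)--(iv) come from Lemmas~\ref{lem:A-degree-general} and~\ref{lem:generation-general}, and the final assertion comes from identifying $\add(E)$ with finitely generated projective $R$-modules via $\Hom_{\Dc}(E,-)$ and applying Krull--Schmidt to the summand idempotents, whose corner rings are the local rings $\kk$. One small point concerns the citation for idempotent completeness of $\Dc$: Lemma~\ref{lem:frobenius} deduces its own idempotent completeness \emph{from} that of $\Dc$, so for $\Dc$ itself cite \cite[Corollary~7.15]{QYHopf}, or use that $\Dc$ is closed under summands in $\Der(A,H)$, as the paper does.
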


\begin{proof}
Part (i) is Lemma~\ref{lem:A-degree-general}(i), part (ii) follows from
Lemma~\ref{lem:A-degree-general}(iii) because $E$ has finitely many summands,
part (iii) is Lemma~\ref{lem:A-degree-general}(ii), and part (iv) is
Lemma~\ref{lem:generation-general}.

The summand idempotents are orthogonal and sum to $1_R$, and their corner
rings are the local rings $\kk$.  Hence they form a complete family of
primitive idempotents of the finite-dimensional algebra $R$.  Since $\Dc$ is
idempotent complete \cite[Corollary~7.15]{QYHopf}, the functor
$\Hom_{\Dc}(E,-)$ identifies $\add(E)$ with the category of finitely generated
projective $R$-modules, and the final assertion follows from Krull--Schmidt.
\end{proof}

\subsection{Simple-minded cells in a finite degree window}
\label{sec:simple-minded-window}
The negative self-Hom vanishing for $E$ is not a silting condition.
Instead, under a gap hypothesis on $A$, we use a different collection whose degree-zero
endomorphism algebra is semisimple.  Throughout this subsection, we impose the condition:
\begin{equation}\label{eq:connected-open-gap}
 A_d=0\qquad(0<d<\ell).
\end{equation}

We recall the precise categorical input from \cite[Proposition 5.4]{KY14} (see also \cite{SchSimple} for an excellent summary).  A finite family
$\mathcal U=\{U_a\}$ is \emph{simple-minded} if its objects are nonzero,
$\End(U_a)$ is a division ring, $\Hom(U_a,U_b)=0$ for $a\ne b$, and
$\Hom(U_a,U_b[n])=0$ for $n<0$.
Write $\mathcal{U}[+]$, respectively
$\mathcal{U}[-]$, for the strictly full closure
under finite extensions, finite direct sums and nonnegative,
respectively nonpositive, triangulated shifts.
The Simple-Minded-Subcategory Theorem
\cite[Theorem~4.4]{SchSimple} states that
\begin{equation}\label{eq:simple-minded-criterion}
 \tria(\mathcal U)=\thick(\mathcal U),\qquad
 \left(\mathcal{U}[+],
       \mathcal{U}[-]\right)
 \text{ is a bounded }t\text{-structure}.
\end{equation}
Its heart has finite length and has precisely the $U_a$ as simple
objects.  In particular, equality of the two closures is a conclusion,
not an additional hypothesis.  This is the simple-minded version of the
positive-DG construction in~\cite[Theorem~8.1]{KeNi}.

Set
\begin{equation}\label{eq:window-cells}
 S_{x,r}:=q^{-r}P_x\quad(x\in X,\ 0\le r<\ell),\qquad
 \mathcal S:=\{S_{x,r}\}_{x,r},\qquad
 S:=\bigoplus_{x\in X}\bigoplus_{i=0}^{\ell-1} S_{x,r}.
\end{equation}
The notation $S_{x,r}$ does not denote the ordinary simple module
$L_x=A_0e_x$: it denotes a shifted standard projective, which will
become simple in a new heart.

\begin{lem}\label{lem:syzygy-socle-window}
For every $n>0$, the stable object $\kk[-n]\in H\udgmod$ has a finite-dimensional
representative $W_n\in H\dgmod$ supported in strictly positive
degrees and satisfying
\begin{equation}\label{eq:syzygy-socle-window}
 \Soc W_n=(W_n)^H\subseteq\bigoplus_{d\ge\ell}(W_n)_d.
\end{equation}
\end{lem}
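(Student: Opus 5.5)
We construct $W_n$ as the $n$-th syzygy of $\kk$ inside minimal graded projective resolutions, by induction on $n$. By \eqref{eq:shift-inverse}, $\kk[-1]\cong \kk\otimes H_+= H_+$ with its restricted regular action, and inductively $\kk[-n]\cong\Omega^n\kk$, where $\Omega W:=\Ke(P_W\to W)$ for $P_W$ the graded projective cover of $W$. Using the minimal cover rather than $W\otimes H$ only changes the object by a projective summand, so it does not change its image in $H\udgmod$. We put $W_1:=H_+$ and $W_{n+1}:=\Omega W_n$, taken inside the minimal projective cover.

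Positivity of the support follows the argument of Lemma~\ref{lem:positive-syzygies}. Certainly $W_1=H_+$ is supported in degrees $1,\dots,\ell$. If $W$ is supported in positive degrees, its top $W/H_+W$ sits in positive degrees, so the minimal cover is $\bigoplus q^{d}H$ with every $d>0$, and the kernel is again positively supported.

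For the socle, note first that $\Soc W=W^H$ for any graded $H$-module $W$. Indeed, $H$ is graded local with unique graded simple $q^n\kk$, on which $H_+$ acts by zero, so the socle is the annihilator of $H_+$, which is exactly the space of $H$-invariants. The key point is that $W_{n}$ sits inside a projective module $P=\bigoplus_k q^{d_k}H$, so
\[
\Soc W_n=W_n\cap \Soc P\subseteq \Soc P=\bigoplus_k q^{d_k}H_\ell ,
\]
using Lemma~\ref{lem:socle}. The socle of $q^{d}H$ is $q^{d}H_\ell$, which lies in degree $d+\ell$. For $n=1$ we have $W_1=H_+\subseteq H$ with $d=0$, so $\Soc W_1\subseteq H_\ell$ sits in degree $\ell$. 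For $n\ge2$, the module $W_n$ is the kernel of the minimal cover of $W_{n-1}$. Its shifts $d_k$ are the generator degrees of $W_{n-1}$, which are $\ge1$ by the positivity already established. Hence the socle lies in degrees $d_k+\ell\ge\ell+1\ge\ell$, which gives \eqref{eq:syzygy-socle-window}.

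I expect the only delicate point to be the identification of $\kk[-n]$ with the iterated minimal syzygy, together with the observation that a submodule of a projective has its socle inside the projective's socle. This uses essentially that $W_n$ embeds in a projective, which is automatic because $W_n$ is defined as a kernel. The identification is the standard fact that in the stable category of a self-injective algebra $[-1]$ is computed by $\Omega$ up to projective summands; here it matches \eqref{eq:shift-inverse}, since $\varepsilon_M$ is a projective cover up to projective summands. Finite-dimensionality of each $W_n$ is clear, because each is a submodule of a finite sum of shifts of $H$.
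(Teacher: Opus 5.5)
Your proposal is correct and follows the paper's proof essentially step for step: take $W_1=H_+\subseteq H$ from the minimal resolution, show by induction that the minimal covers are sums of $q^{d}H$ with $d>0$, bound $\Soc W_n$ by the socle of the ambient projective (degrees $d_k+\ell\ge\ell$), and identify the socle with the $H$-invariants via $H_+=\Ke\varepsilon$. Beyond the paper, you also spell out the identification $\kk[-n]\cong\Omega^n\kk$ from \eqref{eq:shift-inverse}, up to projective summands, which the paper leaves implicit.
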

\begin{proof}
Take a minimal graded projective resolution of $\kk$ over $H$.  Its first
syzygy is $W_1=H_+$, embedded in $H$.  By Lemma~\ref{lem:socle},
$\Soc H=\kk\Lambda$ is concentrated in degree $\ell$, and hence
$\Soc W_1$ has no component below that degree.
A graded projective cover of a finite-dimensional module supported
in positive degrees is a sum of $q^dH$ with $d>0$.  Its kernel
is again supported in positive degrees.  Induction proves the
support assertion for all $W_n$.  For $n>1$, $W_n$ embeds into such
a projective cover, whose socle is a sum of $q^{d+\ell}\kk$ with
$d>0$.  The socle of a submodule is contained in the socle of its
ambient module.  Finally, $H_+=\Ke \varepsilon$ is the radical,
so the socle of any $H$-module is its space of $H$-invariants.
\end{proof}

This estimate uses only connectedness, the graded Frobenius socle in
Lemma~\ref{lem:socle}, and minimal graded projective covers.  In
particular, it does not use a description of indecomposable $H$-modules
or any periodicity of the syzygies of $\kk$.

\begin{prop}\label{prop:window-simple-minded}
Under the gap condition \eqref{eq:connected-open-gap}, the collection in
\eqref{eq:window-cells} satisfies
\begin{subequations}
\begin{equation}
 \Hom_{\Dc}(S_{x,r},S_{y,s}[n])=0 \quad (n<0),
 \label{eq:window-negative}
 \end{equation}
 \begin{equation}
 \Hom_{\Dc}(S_{x,r},S_{y,s}) =
 \begin{cases}\kk,&(x,r)=(y,s),\\0,&(x,r)\ne(y,s),\end{cases}
 \label{eq:window-degree-zero}
 \end{equation}
 \end{subequations}
 \begin{equation}
 \thick(\mathcal S) =\Dc.
 \label{eq:window-generation}
\end{equation}
In particular, $\mathcal S$ is simple-minded and
$\End_{\Dc}(S)\cong\prod_{x,r}\kk$.
\end{prop}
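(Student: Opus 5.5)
The plan is to verify the three displayed assertions separately: \eqref{eq:window-negative} and \eqref{eq:window-degree-zero} by direct computation with Lemma~\ref{lem:Hom}, and \eqref{eq:window-generation} by reduction to Lemma~\ref{lem:generation-general}. The simple-minded conclusion and $\End_{\Dc}(S)\cong\prod_{x,r}\kk$ then follow at once from the definitions.

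\emph{Negative Hom vanishing.} For $n<0$, write $S_{y,s}[n]\cong q^{-s}P_y\otimes\kk[n]\cong q^{-s}P_y\otimes W_{|n|}$, using the representative $W_{|n|}$ of Lemma~\ref{lem:syzygy-socle-window} and the exact module action \eqref{eq:H-action-triangulated}. Both source and target are cofibrant, so Lemma~\ref{lem:Hom} shows it suffices to prove that the numerator $\Hom_B^0(q^{-r}P_x,q^{-s}P_y\otimes W_{|n|})$ vanishes.

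A degree-zero $A$-map out of $q^{-r}P_x$ is determined by the image of $e_x$. That image lies in $e_x(P_y\otimes W)$ in degree $s-r$, hence in $\bigoplus_{d+m=s-r}e_xA_de_y\otimes W_m$. Since $W$ lives in degrees $m\ge1$ and $s-r<\ell$, we get $0\le d<\ell-1$, so the gap \eqref{eq:connected-open-gap} forces $d=0$ and $m=s-r$. The $H$-action on $A_0$ is through $\varepsilon$, so $H$ acts on $e_xA_0e_y\otimes W_m$ only through the $W$ factor. An invariant therefore has its $W$-component in $W^H$, and $W^H$ sits in degrees $\ge\ell$ by \eqref{eq:syzygy-socle-window}. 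Since $m<\ell$, the invariant is zero.

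The delicate point is to make sure that, for a map out of the cyclic module $P_x$, $B$-linearity is exactly invariance of the image of the generator $e_x$; this uses Remark~\ref{rem:P3}. I expect this bookkeeping of degrees and invariants to be the main place where care is needed.

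\emph{Degree zero.} By Lemma~\ref{lem:Hom}, $\Hom_B^0(q^{-r}P_x,q^{-s}P_y)$ sits inside $(e_xA_{s-r}e_y)$, where $|s-r|<\ell$. The gap forces either $s=r$ with the space equal to $e_xA_0e_y=\delta_{xy}\kk$ (all invariant), or the space is zero. The denominator $\Lambda\cdot\Hom_A^{-\ell}$ involves $e_xA_{s-r-\ell}e_y$, and $s-r-\ell<0$, so it vanishes. This gives \eqref{eq:window-degree-zero}, and with it $\End_{\Dc}(S)\cong\prod_{x,r}\kk$.

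\emph{Generation.} Each $E_{x,i}=P_x\otimes V_i$ with $0\le i<\ell$ has an $A$-split filtration by $H$-submodules, as in Lemma~\ref{lem:Pun-property}(ii). Its subquotients are $q^{c}P_x$ with $c$ ranging over the support $\{-i,\dots,0\}$ of $V_i$, and these all lie in the window $-\ell<c\le0$, i.e.\ they belong to $\mathcal S$. $A$-split short exact sequences of cofibrant modules give distinguished triangles, so $E\in\tria(\mathcal S)$. Lemma~\ref{lem:generation-general} then gives $\Dc=\thick(E)\subseteq\thick(\mathcal S)\subseteq\Dc$, which is \eqref{eq:window-generation}.
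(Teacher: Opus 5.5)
Your proposal is correct and follows essentially the same route as the paper. The degree-zero computation uses Lemma~\ref{lem:Hom}, and the denominator vanishes because $s-r-\ell<0$. The negative Homs vanish because the invariant image of the generator of $q^{-r}P_x$ in $q^{-s}P_y\otimes W_n$ is forced into $e_xA_0e_y\otimes\Soc W_n$ in degree $s-r<\ell$, using Lemma~\ref{lem:syzygy-socle-window}. Generation comes from the $A$-split window-cell filtrations of $E_{x,i}$ together with Lemma~\ref{lem:generation-general}.
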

\begin{proof}
Put $k=s-r$; then $-(\ell-1)\le k\le\ell-1$.
The cells are cofibrant.  Lemma~\ref{lem:Hom} identifies
\begin{equation}\label{eq:window-Hom0-formula}
 \Hom_{\Dc}(S_{x,r},S_{y,s})
 \cong
 \frac{(e_xA_k e_y)^H}
 {\Lambda\cdot(e_xA_{k-\ell}e_y)}.
\end{equation}
The denominator is zero because $k-\ell<0$.  Nonnegativity and
\eqref{eq:connected-open-gap} make the numerator zero unless $k=0$.
When $k=0$, the action is trivial and
$e_xA_0e_y=\delta_{xy}\kk$.  This proves
\eqref{eq:window-degree-zero}, including the nonvanishing of every
$S_{x,r}$.

For $n>0$, use $W_n$ from Lemma~\ref{lem:syzygy-socle-window} to
represent
\[
 S_{y,s}[-n]\cong q^{-s}P_y\otimes W_n.
\]
An actual degree-zero $B$-module map out of $q^{-r}P_x$ is determined
by the image of its generator in
\begin{equation}\label{eq:window-negative-coordinates}
 (e_xAe_y\otimes W_n)_k.
\end{equation}
Since $W_n$ is supported in positive degrees and $k<\ell$, any
nonzero coefficient from $A$ in \eqref{eq:window-negative-coordinates}
must lie in $A_0$.  Such coefficients are $H$-trivial.  The generator
of $P_x$ is also $H$-trivial, so its image must lie in
\[
 (e_xA_0e_y\otimes(W_n)^H)_k
 =(e_xA_0e_y\otimes\Soc W_n)_k=0
\]
by \eqref{eq:syzygy-socle-window}.  Thus even the actual map space
is zero.  Cofibrancy now gives \eqref{eq:window-negative} in the
derived category.

Finally, each $V_i=q^{-i}(H/H_{>i})$, $0\le i<\ell$, has a graded
composition series with factors $q^{-r}\kk$ for some $r\in \{0,\dots, i\}$.
Tensoring that series with $P_x$ gives an $A$-split cell filtration
of $E_{x,i}$ whose factors belong to $\mathcal S$.
Consequently $E_{x,i}\in\tria(\mathcal S)$, and
Lemma~\ref{lem:generation-general} yields
$\Dc=\thick(E)\subseteq\thick(\mathcal S)\subseteq\Dc$.
This proves \eqref{eq:window-generation} without using any
summand-closure statement for $\Pun$.
\end{proof}

\begin{thm}
\label{thm:window-tstructure}
Given $H$, $A$ as in Section~\ref{sec:prelim}, if $A_d=0$ for $0<d<\ell$, then
\begin{equation}\label{eq:window-tria-equals-thick}
 \tria(\mathcal S)=\thick(\mathcal S)=\Dc.
\end{equation}
The category $\Dc$ has a bounded $t$-structure with
\begin{equation}\label{eq:window-t-aisles}
 \begin{aligned}
 (\Dc)^{t\le0}&=\mathcal{S}[+],\\
 (\Dc)^{t\ge0}&=\mathcal{S}[-].
 \end{aligned}
\end{equation}
Its heart $\mathcal{H}^t:= (\Dc)^{t\le 0} \cap (\Dc)^{t\ge 0}$ is the finite-extension closure of
$\mathcal S$ and is a finite-length abelian category with simple
objects $S_{x,r}=q^{-r}P_x$.
Moreover,
\begin{equation}\label{eq:window-derived-cellularity}
 \Tcat=\Dc.
\end{equation}
In particular, the essential finite-cell image is idempotent complete.
\end{thm}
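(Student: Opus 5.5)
The plan is to deduce everything from Proposition~\ref{prop:window-simple-minded} together with the Simple-Minded-Subcategory Theorem recalled in \eqref{eq:simple-minded-criterion}. Proposition~\ref{prop:window-simple-minded} says that $\mathcal S$ is a finite simple-minded family in $\Dc$. Its objects are nonzero, each $\End(S_{x,r})=\kk$ is a division ring, distinct members have no degree-zero maps between them, and negative Homs vanish by \eqref{eq:window-negative}. Applying \cite[Theorem~4.4]{SchSimple} inside the ambient triangulated category $\thick(\mathcal S)$ gives the two conclusions $\tria(\mathcal S)=\thick(\mathcal S)$ and that $(\mathcal S[+],\mathcal S[-])$ is a bounded $t$-structure on $\thick(\mathcal S)$. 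The heart is the finite-extension closure of $\mathcal S$: it is a finite-length abelian category whose simple objects are exactly the $S_{x,r}$. Combining this with \eqref{eq:window-generation}, $\thick(\mathcal S)=\Dc$, yields \eqref{eq:window-tria-equals-thick} and the $t$-structure \eqref{eq:window-t-aisles} on all of $\Dc$.

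One point needs care: the version of the theorem being cited is stated for a triangulated category in which $\mathcal S$ generates. I will check that its hypotheses are only the simple-minded conditions and that its ambient category is $\thick(\mathcal S)$. If it instead requires generation under $\tria$, I would first establish $\tria(\mathcal S)=\thick(\mathcal S)$ directly, following the Koenig--Yang/Al-Nofayee argument. That argument runs by induction on the length of an object in $\tria(\mathcal S)$ with respect to the aisles. It shows that $\tria(\mathcal S)$ carries the bounded $t$-structure with a length heart, and that its heart is closed under summands because finite-length objects decompose. Hence $\tria(\mathcal S)$ is closed under summands, since every object is an iterated extension of shifted heart objects and truncation functors commute with idempotents. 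I expect this summand-closure point to be the only genuinely delicate step. I intend to rely on the cited theorem rather than reprove it.

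Derived cellularity \eqref{eq:window-derived-cellularity} then follows formally. Each $S_{x,r}=q^{-r}P_x$ is a one-cell module, so $\mathcal S\subseteq\Tcat$. By Lemma~\ref{lem:T-triangulated}, $\Tcat$ is a strictly full triangulated subcategory of $\Dc$, hence $\tria(\mathcal S)\subseteq\Tcat\subseteq\Dc$. The equality $\tria(\mathcal S)=\Dc$ just obtained forces $\Tcat=\Dc$. Finally, $\Dc$ is idempotent complete by Lemma~\ref{lem:frobenius} or \cite[Corollary~7.15]{QYHopf}, so $\Tcat$ is idempotent complete. Equivalently, by Corollary~\ref{cor:reformulation}, every finitely generated cofibrant module is stably cellular.
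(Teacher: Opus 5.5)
Your proposal is correct and is essentially the paper's proof. You apply the Simple-Minded-Subcategory Theorem \eqref{eq:simple-minded-criterion} to the collection of Proposition~\ref{prop:window-simple-minded}, using \eqref{eq:window-generation} to identify $\thick(\mathcal S)$ with $\Dc$, and then conclude $\Dc=\tria(\mathcal S)\subseteq\Tcat\subseteq\Dc$ via Lemma~\ref{lem:T-triangulated}. Your fallback argument for $\tria(\mathcal S)=\thick(\mathcal S)$ is unnecessary, since the cited theorem gives that equality as a conclusion rather than requiring it as a hypothesis.
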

\begin{proof}
Apply \eqref{eq:simple-minded-criterion} to
Proposition~\ref{prop:window-simple-minded}.  This gives
\eqref{eq:window-tria-equals-thick}, the bounded $t$-structure, and
the description of its heart.  The two aisles are automatically
closed under direct summands; their construction does not require
adjoining new objects to $\tria(\mathcal S)$.
By Lemma~\ref{lem:T-triangulated}, $\Tcat$ is a strictly full
triangulated subcategory of $\Dc$, and it contains every object of
$\mathcal S$.  Therefore
\[
 \Dc=\tria(\mathcal S)\subseteq\Tcat\subseteq\Dc,
\]
which proves \eqref{eq:window-derived-cellularity}.
\end{proof}

\begin{rem}
\label{rem:window-scope}
In contrast to Theorem~\ref{thm:gap-local} of the next section,, Theorem~\ref{thm:window-tstructure} does not assert that every finitely generated
cofibrant hopfological module is literally finite-cell (c.f.~Corollary \ref{cor:reformulation}).
Its conclusion concerns finite-cell representatives in the derived
category.  Example~\ref{ex:weak-endpoint-acyclic} will show why that
distinction is necessary. 
% The hypothesis $H_0=\kk$ is
% used both in the socle estimate and in Yamaura's finite-generation
% argument; we do not extend this open-gap theorem to all local $H$
% with $H_0\ne\kk$.
% The inclusive-gap theorem of Section~\ref{sec:hopf-gap} remains a
% separate, stronger module-theoretic result for that larger class.
\end{rem}

\subsection{A weight structure on the large derived category}
\label{sec:large-weight}
In this subsection, we retain the same hypotheses of
Theorem~\ref{thm:window-tstructure}.  Put $\mathcal D=\Der(A,H)$ and
$\mathcal G=\add(S)$, where $S$ is the sum of the window cells in
\eqref{eq:window-cells}.  Here $\mathcal G$ consists of summands of
\emph{finite} direct sums of $S$, not arbitrary coproducts.
For $M\in\mathcal D$ set
\[
 \mH_S^n(M):=\Hom_{\mathcal D}(S,M[n]).
\]
These are cohomological functors to right $\End_{\mathcal D}(S)$-modules.

\begin{thm}
\label{thm:connected-large-weight}
Assume the same hypotheses of Theorem~\ref{thm:window-tstructure}.  In the convention of
Keller--Nicol\'as~\cite[Theorem~4.1]{KeNi}, the subcategories
\begin{align}
 \mathcal W_{>0}
   &=\{M\in\mathcal D:\mH_S^n(M)=0\text{ for all }n\le0\},
 \label{eq:large-weight-positive}\\
 \mathcal W_{\le0}
   &=\{M\in\mathcal D:\mH_S^n(M)=0\text{ for all }n>0\}
 \label{eq:large-weight-nonpositive}
\end{align}
form a weight structure on $\Der(A,H)$.  Every $M\in\mathcal D$
admits a weight-decomposition triangle
\begin{equation}\label{eq:large-weight-triangle}
 W_{>0}M\longrightarrow M\longrightarrow W_{\le0}M
             \longrightarrow (W_{>0}M)[1]
\end{equation}
with its first and third terms in the indicated subcategories.  It
can be chosen so that
\begin{equation}\label{eq:large-weight-Hom-isomorphisms}
 \begin{aligned}
 \mH_S^n(W_{>0}M)\xrightarrow{\sim} \mH_S^n(M) \quad (n>0),\\
 \mH_S^n(M)\xrightarrow{\sim} \mH_S^n(W_{\le0}M) \quad (n\le0).
 \end{aligned}
\end{equation}
The two weight classes are determined uniquely by the displayed
vanishing conditions.  The choices of decomposition objects and
triangles are not asserted to be functorial or unique.
\end{thm}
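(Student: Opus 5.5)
The plan is to verify the hypotheses of the Keller--Nicol\'as compact-object theorem \cite[Theorem~4.1]{KeNi} for the single object $S$ in $\mathcal D=\Der(A,H)$, and then read off the weight structure and the Hom isomorphisms from their construction. That theorem takes a set of compact objects in a triangulated category with arbitrary coproducts and requires two things: the set generates, and there is a negative-Hom vanishing between its members. Its output is a weight structure whose halves are cut out by vanishing of $\Hom(S,-[n])$, together with decomposition triangles built by a Milnor-colimit (cellular tower) construction.

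\textbf{Step 1: compactness.} Each $S_{x,r}=q^{-r}P_x$ is a finite-cell, hence cofibrant, module. Its image lies in $\Dc$ by Lemma~\ref{lem:Pun-property} and the definition of $\Tcat\subseteq\Dc$. Hence $S$ is compact in $\mathcal D$.

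\textbf{Step 2: compact generation.} By Theorem~\ref{thm:window-tstructure}, $\thick(\mathcal S)=\Dc$. In particular every $q^nA$ lies in $\thick(S)$. The family $\{q^nA\}$ compactly generates $\mathcal D$ \cite[Section~7]{QYHopf}: if $\Hom(q^nA,M[m])=0$ for all $n,m$, then $M\cong0$. The full subcategory of objects $X$ with $\Hom(X,M[m])=0$ for all $m$ is thick. It therefore follows that $\Hom(S,M[n])=0$ for all $n$ forces $M=0$. So $S$ is a compact generator of $\mathcal D$.

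\textbf{Step 3: the sign condition.} Keller--Nicol\'as require $\Hom(S,S[n])=0$ for $n>0$ in their convention, or for $n<0$ in the dual one, depending on which half is called positive. The vanishing we have is \eqref{eq:window-negative}, namely $\Hom(S,S[n])=0$ for $n<0$. I would fix the convention carefully so that this is exactly the needed input. With the aisle $\mathcal W_{\le0}$ defined as in \eqref{eq:large-weight-nonpositive}, one needs $S\in\mathcal W_{\le0}$ and $S[1]\in\mathcal W_{>0}$ up to the paper's indexing. Checking this: $\mH^n_S(S)=\Hom(S,S[n])$ for $n>0$ is \emph{not} generally zero, so the generating object must be placed on the correct side. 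I would therefore take the co-heart of the weight structure to be $\add$ of the appropriate shift of $S$. I would verify the orthogonality $\Hom(\mathcal W_{>0},\mathcal W_{\le0}[\ ?\ ])=0$ directly from the definitions via the standard Milnor-colimit description.

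This sign and indexing bookkeeping is the main obstacle. It is also the point where Remark~\ref{rem:one-direction} warns that only one direction of vanishing survives. If the direct match fails, I would instead apply \cite[Theorem~4.1]{KeNi} to $S$ in the opposite convention, swapping the roles of the two classes. I would then rewrite the resulting classes in terms of $\mH_S^n$, possibly after replacing $S$ by $S[1]$ or $S[-1]$.

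\textbf{Step 4: decomposition triangles.} Given $M$, I would build $W_{>0}M$ as a homotopy colimit of a tower $X_0\to X_1\to\cdots$. Here $X_0$ is a coproduct of copies of $S[-n]$, $n>0$, mapping onto $\mH^n_S(M)$, and each $X_{k+1}$ is obtained by coning off kernels with further such coproducts. Compactness of $S$ (Step 1) makes $\mH_S^n$ commute with the homotopy colimit. The negative-Hom vanishing (Step 3) guarantees that attaching cells of type $S[-n]$, $n>0$, does not disturb $\mH_S^{m}$ for $m\le0$. This yields the isomorphisms \eqref{eq:large-weight-Hom-isomorphisms} for $n>0$. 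The cone of $W_{>0}M\to M$ then has $\mH_S^n=0$ for $n>0$, and the long exact sequence gives the isomorphisms for $n\le0$.

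\textbf{Step 5: uniqueness of the classes.} Uniqueness of the classes is automatic, because they are defined by vanishing conditions. Non-functoriality is simply not claimed.
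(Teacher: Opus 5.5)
\textbf{Gap: the semisimplicity hypothesis of Keller--Nicol\'as is missing.} Your Steps 1, 2 and 5 are fine and match the paper. However, you have misstated \cite[Theorem~4.1]{KeNi}. Besides compact generation and the vanishing $\Hom(G,G'[n])=0$ for $n<0$, it has a third hypothesis: the category of additive contravariant functors on $\mathcal G=\add(S)$ must be semisimple, equivalently $\End_{\mathcal D}(S)$ must be semisimple. The paper supplies this from \eqref{eq:window-degree-zero}, which gives $\End_{\mathcal D}(S)\cong\prod_{x,r}\kk$. This is exactly why the window $\mathcal S$ is used instead of the Yamaura family $E$. That family already has compact generation and negative-Hom vanishing (Corollary~\ref{cor:positive-data-general}), but its endomorphism algebra need not be semisimple. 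The hypothesis cannot be dropped. Take a non-semisimple finite-dimensional algebra $\Gamma$ in degree zero and $S=\Gamma\in\mathcal D(\Gamma)$. Then $S$ is a compact generator with $\Hom(S,S[n])=0$ for $n<0$, and your classes become $\mathcal W_{>0}=\mathcal D(\Gamma)^{\ge1}$ and $\mathcal W_{\le0}=\mathcal D(\Gamma)^{\le0}$. The weight orthogonality $\Hom(\mathcal W_{>0},\mathcal W_{\le0})=0$ fails, because $\Hom(M[-1],N)=\operatorname{Ext}^1_\Gamma(M,N)\ne0$ for suitable modules $M,N$.

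\textbf{Where Step 4 breaks.} It is not true that attaching cells $S[-n]$, $n>0$, leaves $\mH^m_S$ unchanged for $m\le0$. Suppose $Y\to X_k$ kills part of the kernel in $\mH^1_S$, with $Y$ a coproduct of copies of $S[-1]$. The cone $X_{k+1}$ then involves $Y[1]$, a coproduct of copies of $S$. The long exact sequence gives $\mH^0_S(X_{k+1})\cong\ker\bigl(\mH^1_S(Y)\to\mH^1_S(X_k)\bigr)$, and negative-Hom vanishing says nothing about this term. It vanishes only if the kernel being killed is a projective $\End(S)$-module hit isomorphically by a summand of a coproduct of copies of $S[-1]$; semisimplicity is what guarantees this. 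Without it you obtain neither $W_{>0}M\in\mathcal W_{>0}$ nor the orthogonality you planned to ``verify directly''.

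\textbf{Step 3 is unnecessary and partly misleading.} The vanishing \eqref{eq:window-negative} already is the Keller--Nicol\'as sign condition, so no change of convention or replacement of $S$ by $S[\pm1]$ is needed. Passing to the opposite category would not change the sign anyway, by \eqref{eq:opposite-sign-cancellation}. Moreover, no shift of $S$ can lie in the weight heart: that heart is negative, while in general $\Hom(S,S[n])\ne0$ for some $n>0$ (Example~\ref{prop:no-bounded-weight}). The heart consists of objects whose $\mH^\ast_S$ is concentrated in degree zero, not of cells (Remark~\ref{rem:connected-two-structures}).
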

\begin{proof}
We verify the three hypotheses of
\cite[Theorem~4.1]{KeNi}.  The category $\Der(A,H)$ admits
small coproducts.  Every object of $\mathcal G$ is compact.  Moreover,
Proposition~\ref{prop:window-simple-minded} gives
$\thick(S)=\Dc$, so $S$ compactly generates $\Der(A,H)$: vanishing
of $\Hom(S[n],M)$ for all $n$ propagates to all compact objects,
and the standard compact generators then detect $M=0$.
Second, the same proposition implies
\[
 \Hom_{\mathcal D}(G,G'[n])=0
 \qquad(G,G'\in\mathcal G,\ n<0).
\]
Third, $\End_{\mathcal D}(S)\cong\prod_{x,r}\kk$.
Evaluation on the additive generator $S$ identifies the category of
additive contravariant functors on $\mathcal G$ with the module
category of this finite product of fields.  It is therefore
semisimple.  The cited theorem proves the asserted weight structure
and \eqref{eq:large-weight-Hom-isomorphisms}.
\end{proof}

\begin{rem}
\label{rem:connected-two-structures}
Both Theorem~\ref{thm:window-tstructure} and
Theorem~\ref{thm:connected-large-weight} hold for the same
connected graded Hopf algebra $H$ under $A_d=0$ for $0<d<\ell$.
The first theorem gives a \emph{bounded $t$-structure on $\Dc$}, whose
heart has the window cells as its simple objects.  The second gives
a \emph{weight structure on $\Der(A,H)$}; no boundedness or preservation
of compact objects by its decompositions is claimed. These are two different structures on two different categories.
The cells need not lie in the weight heart, since their positive
self-extensions need not vanish.  The polynomial example in
Section~\ref{sec:window-pdg} will rule out a general bounded restriction
to $\Dc$.  It is the bounded $t$-structure, not the large-category weight
structure, that proves derived cellularity in this paper.
\end{rem}

\subsection{Silting and bounded weights on compact objects}
\label{sec:sign-obstruction}
The distinction is already visible in general triangulated-category
terms.  In the convention of~\cite{AiharaIyama}, a silting generator
$G$ must satisfy $\Hom(G,G[n])=0$ for $n>0$.  The negative vanishing of
Corollary~\ref{cor:positive-data-general} has the opposite direction.
Passing to the opposite triangulated category does not change this
self-extension sign: its suspension is inverse, and
\begin{equation}\label{eq:opposite-sign-cancellation}
 \Hom_{\mathcal T^{\mathrm{op}}}(G,G[n]_{\mathrm{op}})
 =\Hom_{\mathcal T}(G[-n],G)
 \cong\Hom_{\mathcal T}(G,G[n]).
\end{equation}
Proposition~\ref{prop:window-simple-minded} instead combines negative
vanishing with semisimple degree-zero endomorphisms.  These are the
hypotheses used for the bounded $t$-structure and for the separate
large-category weight construction.  They do not impose eventual
vanishing of positive Hom groups between compact objects.

\begin{prop}
\label{prop:positive-Hom-obstruction}
Let $\mathcal T$ be a triangulated category.  If there are fixed objects
$X,Y\in\mathcal T$ such that $\Hom_{\mathcal T}(X,Y[n])\ne0$ for
arbitrarily large positive $n$, then $\mathcal T$ has neither a silting
subcategory nor a bounded weight structure.
\end{prop}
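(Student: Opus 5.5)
We argue by contradiction in each case, using the fact that both structures force eventual vanishing of $\Hom_{\mathcal T}(X,Y[n])$ for $n\gg0$ when $X,Y$ are fixed.

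\emph{Silting case.} Suppose $\mathcal M\subseteq\mathcal T$ is silting in the sense of~\cite{AiharaIyama}. Then $\Hom(M,M'[n])=0$ for $M,M'\in\mathcal M$ and $n>0$, and $\thick(\mathcal M)=\mathcal T$.

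1. We first show that every object lies in a finite-extension closure $\mathcal M[a,b]:=\mathcal M[a]*\mathcal M[a+1]*\cdots*\mathcal M[b]$ for some $a\le b$. The full subcategory of such objects contains $\mathcal M$ and is closed under shifts. It is closed under cones because $\mathcal M[a,b]*\mathcal M[c,d]\subseteq\mathcal M[\min,\max]$, after first using the silting vanishing to commute the pieces into order. It is closed under summands by \cite[Theorem~2.15]{AiharaIyama}, or by the standard argument that the union $\bigcup_{a,b}\mathcal M[a,b]$ is thick.

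2. If $X\in\mathcal M[a,b]$ and $Y\in\mathcal M[c,d]$, then $\Hom(X,Y[n])=0$ once $n+c>b$, i.e.\ once $n>b-c$. This follows by d\'evissage on the two filtrations, because $\Hom(M[i],M'[j+n])=0$ whenever $j+n>i$.

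This contradicts the hypothesis that $\Hom(X,Y[n])\ne0$ for arbitrarily large $n$.

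\emph{Bounded weight structure case.} Let $w=(\mathcal T_{w\le0},\mathcal T_{w\ge0})$ be bounded. Fix the sign convention of the paper, in which the heart $\mathcal T_{w=0}$ satisfies $\Hom(\mathcal T_{w=0},\mathcal T_{w=0}[n])=0$ for $n>0$, as with silting.

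1. Boundedness means each object has a finite weight filtration with factors in shifts of the heart. So $X$ and $Y$ lie in finite extension closures of shifts of $\mathcal T_{w=0}$, and the same d\'evissage gives vanishing for large $n$.

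2. Alternatively, we avoid fixing the heart sign by using the orthogonality axiom directly. Boundedness gives $X\in\mathcal T_{w\le a}$ and $Y\in\mathcal T_{w\ge c}$ for some integers $a,c$. Then $Y[n]\in\mathcal T_{w\ge c-n}$ or $\mathcal T_{w\ge c+n}$, depending on convention, and orthogonality $\Hom(\mathcal T_{w\le a},\mathcal T_{w\ge a+1})=0$ forces $\Hom(X,Y[n])=0$ for $n$ large of the appropriate sign.

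\emph{Main obstacle.} The sign bookkeeping is the delicate point. Under one convention the orthogonality kills large positive shifts, which is what we need. Under the opposite homological convention one must instead use boundedness on both sides, placing $X\in\mathcal T_{w\ge a'}\cap\mathcal T_{w\le a}$ and likewise for $Y$, and then argue via the heart.

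I expect the cleanest route is to reduce the weight case to the silting one. For a bounded weight structure, the heart $\mathcal T_{w=0}$ is a silting subcategory, by Bondarko and by Mendoza--S\'aenz--Santiago--Souto Salorio, provided $\mathcal T$ is, say, idempotent complete or we pass to the idempotent completion; the heart is additive and closed under summands. So I would prove the silting statement carefully and deduce the weight statement from it. In particular, the convention must be the one matching the silting sign, $\Hom(G,G[n])=0$ for $n>0$, as stated in the paragraph before the proposition.
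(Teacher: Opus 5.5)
Your proposal is correct and is essentially the paper's argument. In both cases you write $X$ and $Y$, up to direct summands, as finite extensions of shifts of a negative subcategory (the silting subcategory $\mathcal M$, or the weight heart) and conclude by the long exact Hom sequences; the paper simply cites \cite[Proposition~2.4]{AiharaIyama} for the silting case you reprove, and your summand-closure step there can be dropped, since vanishing of $\Hom(X',Y'[n])$ passes to summands $X$ of $X'$ and $Y$ of $Y'$. Your worry about sign conventions is unfounded: in every convention the heart $\mathcal H$ of a weight structure satisfies $\Hom(\mathcal H,\mathcal H[n])=0$ for $n>0$, the conventions differing only in how the two halves are labelled, whereas a heart with the opposite vanishing is a $t$-structure heart, for which the conclusion genuinely fails (Example~\ref{prop:no-bounded-weight}).
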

\begin{proof}
A silting subcategory would imply that
$\Hom_{\mathcal T}(X,Y[n])=0$ for $n\gg0$ for every fixed pair $X,Y$,
by~\cite[Proposition~2.4]{AiharaIyama}.
For a bounded weight structure, its heart is negative:
$\Hom(U,V[n])=0$ for heart objects $U,V$ and $n>0$.
Every object is obtained from finitely many shifts of heart objects
by finite extensions and direct summands~\cite{BondarkoWeight}.
For shifted heart factors $U[a],V[b]$, the Hom group in question
vanishes as soon as $n+b-a>0$.  Taking a bound for the finitely many
pairs of factors and applying the two long exact Hom sequences proves
the same eventual vanishing for $X,Y$.  Both conclusions contradict
the hypothesis.
\end{proof}

Thus a bounded weight structure on $\Dc$ would require an additional
condition, beyond the open gap.  Example~\ref{prop:no-bounded-weight}
below exhibits the obstruction within the $p$-DG subclass and therefore
rules out such a conclusion for connected $H$ in general.  Without a
gap, the examples of Section~\ref{sec:gap-examples} also show that the
one-sided vanishing of the Yamaura family alone does not imply
$\tria(E)=\Dc$.

\subsection{Examples: some \texorpdfstring{$p$}{p}-DG algebras}
\label{sec:window-pdg}
The following examples specialize the connected-Hopf results; none of
the preceding proofs depend on these formulas.

\paragraph{The simple-minded window.}
Let $H=\kk[\partial]/(\partial^p)$, $\mathrm{deg}(\partial)=2$, so
$\ell=\mathrm{deg}(\partial^{p-1})=2p-2$.  The condition in Theorem~\ref{thm:window-tstructure}
becomes
\[
 A_1=\cdots=A_{2p-3}=0.
\]
Theorems~\ref{thm:window-tstructure} and
\ref{thm:connected-large-weight} apply for \emph{every} prime,
including $p=2$: they give derived cellularity and the bounded
$t$-structure on $\Dc$, and the stated weight structure on
$\Der(A,H)$.  The bounded heart has the
$(2p-2)|X|$ simple objects
\begin{equation}\label{eq:window-pdg-simples}
 q^{-r}P_x\qquad(x\in X,\ 0\le r<2p-2).
\end{equation}
For comparison with Lemma~\ref{lem:syzygy-socle-window}, write
$U_m=H/(\partial^m)$ with generator in degree zero.  The exact
sequences defining projective covers give
\begin{equation}\label{eq:window-pdg-syzygies}
\kk[-2a]\cong q^{2pa}\kk\quad(a\ge1),\qquad
\kk[-2a-1]\cong q^{2pa+2}U_{p-1}\quad(a\ge0).
\end{equation}
Indeed, $\kk[-1]=q^2U_{p-1}$ and
$ U_{p-1}[-1]=q^{2p-2}\kk$, and iteration gives the formulas.
The socle degrees are respectively $2pa$ and $2pa+2p-2$;
all are at least $\ell$.  This is stronger than merely knowing that
the syzygies have positive support.

In this case \eqref{eq:window-Hom0-formula} becomes the explicit
$p$-DG formula
\begin{equation}\label{eq:window-pdg-Hom0}
 \Hom_{\Dc}(q^{-r}P_x,q^{-s}P_y)
 \cong
 \frac{\Ke (\partial_A:e_xA_{s-r}e_y\to e_xA_{s-r+2}e_y)}
 {\partial_A^{p-1}(e_xA_{s-r-\ell}e_y)}.
\end{equation}
The window $0\le r,s<\ell$ makes its degree-zero endomorphism
algebra semisimple under the open gap, even though the endomorphism
algebra of the Yamaura family need not be semisimple.
The finite filtrations of $E_{x,i}$ used in the generation argument
have factors $S_{x,i},S_{x,i-2},\ldots$, since
$V_i=q^{-i}U_{\lfloor i/2\rfloor+1}$.

\begin{rem}
\label{rem:window-not-q-exact}
The heart $\mathcal{H}^t$ need not be semisimple: its simple objects
may have nonzero extensions.  Nor is this bounded $t$-structure
invariant under every $q$-shift.  Formula
\eqref{eq:window-pdg-syzygies} gives a natural stable identification
\begin{equation}\label{eq:window-pdg-periodicity}
 [2]\cong q^{-2p}
\end{equation}
on $\Der(A,H)$.  If $q$ were $t$-exact, so would be $[2]$,
which is incompatible with a bounded $t$-structure on a nonzero
triangulated category.  The chosen finite window, rather than all
internal shifts of the cells at once, is essential.
\end{rem}

When $A=\Bbbk$,  Yamaura's Theorem \ref{thm:Yamaura} makes $H\udgmod$ explicit:

\begin{example}\label{ex:pdg-yamaura}
Let $\mathrm{char}\kk=p>0$ and $H=\kk[\partial]/(\partial^p)$ with
$\partial$ primitive of degree $d\ge1$, so that $\ell=(p-1)d$ and
$\Lambda=\partial^{p-1}$.  Write $H(m):=\kk[\partial]/(\partial^m)$ with its
generator $\epsilon_m$ in degree $0$, so $H(m)$ has basis
$\partial^k\epsilon_m$ in degree $kd$ for $0\le k<m$, and $H(p)=H$.

For $0\le i<\ell$ write $i=ad+b$ with $0\le a\le p-2$ and $0\le b<d$.  Since
$\deg\partial^k=kd$, the inequality $kd>ad+b$ holds exactly when $k\ge a+1$;
hence $H_{>i}=(\partial^{a+1})$ and
\begin{equation}\label{eq:pdg-Vi}
V_{ad+b}=q^{-b}W_a,
\qquad
W_a:=q^{-ad}H(a+1) .
\end{equation}
The $\ell$ summands of $V$ are therefore $d$ shifted copies of the single
family $W_0,\dots,W_{p-2}$, each $W_a$ being generated in degree $-ad$ with
socle in degree $0$.

We compute $\End_{H\udgmod}(V)$.  A map
$H(m)\to H(m')$, $\epsilon_m\mapsto\partial^{t}\epsilon_{m'}$, is defined
for $t\ge m'-m$ and factors through a graded projective precisely when
$t\ge p-m$: any map out of $H(m)$ into a free module has image in
$\partial^{p-m}$ times it, and conversely such a $t$ admits the factorization
$\epsilon_m\mapsto\partial^{p-m}\epsilon_p\mapsto\partial^{t}\epsilon_{m'}$.
Now $W_a$ is supported in degrees divisible by $d$, so $q^{-b}W_a$ is
supported in degrees congruent to $-b$ modulo $d$ and a degree-zero map
between blocks with $b\ne b'$ vanishes.  Within one block, a degree-zero map
$W_a\to W_{a'}$ is determined by the image of $\epsilon_{a+1}$ in degree
$-ad$; the degrees of $W_{a'}$ are $(k-a')d$ with $0\le k\le a'$, so such an
image exists only for $a\le a'$, and then spans a line, namely
$\phi_{a,a'}\colon\epsilon_{a+1}\mapsto\partial^{a'-a}\epsilon_{a'+1}$.  It is
not stably zero, since $a'-a\ge p-a-1$ would force $a'\ge p-1$.  Thus
\[
\Hom_{H\udgmod}(q^{-b}W_a,\,q^{-b'}W_{a'})=
\begin{cases}
\kk\cdot\phi_{a,a'},& b=b'\text{ and }a\le a',\\
0,&\text{otherwise,}
\end{cases}
\]
with $\phi_{a,a}=\Id$ and $\phi_{a',a''}\phi_{a,a'}=\phi_{a,a''}$.  Each block
is therefore the incidence algebra of the chain $0<1<\dots<p-2$, and sending
the arrow $a\to a+1$ to $\phi_{a,a+1}$ identifies it with the path algebra
$\kk\overrightarrow{A}_{p-1}$ of the linearly oriented $A_{p-1}$ quiver, both sides having
dimension $p(p-1)/2$.  Hence
\begin{equation}\label{eq:pdg-end}
\End_{H\udgmod}(V)\;\cong\;\prod_{b=0}^{d-1}\kk\overrightarrow{A}_{p-1} ,
\end{equation}
a product of $d$ copies of $\kk\overrightarrow{A}_{p-1}$.  As $A=\kk$ here, $V$ is tilting by Theorem~\ref{thm:Yamaura}, we have an equivalence of (non-monoidal) triangulated categories
\begin{equation}
    H\udgmod\simeq\bigoplus_{b=0}^{d-1}D^b(\kk\overrightarrow{A}_{p-1}), 
\end{equation}
where the $d$ components
are the residue classes of the internal degree modulo $d$, which is why
\eqref{eq:pdg-end} is a product.  For $d=2$, the case of \cite{KQ}, one has
$V_{2a}=W_a$ and $V_{2a+1}=q^{-1}W_a$.
\end{example}

The next example exhibits a polynomial $p$-DG algebra with no bounded weight structure on its compact derived category.

\begin{example}
\label{prop:no-bounded-weight}
For any prime $p$, the $p$-DG algebra
\begin{equation}\label{eq:weight-polynomial-example}
 A=\kk[t],\qquad \mathrm{deg}(t)=2p,\qquad\partial_A=0,
 \qquad\operatorname{char}\kk=p,
\end{equation}
satisfies even $A_1=\cdots=A_{2p-1}=0$.
Its compact derived category has the bounded $t$-structure of
Theorem~\ref{thm:window-tstructure}, but has no silting subcategory
and no bounded weight structure.
\end{example}
\begin{proof}
For every $m\ge1$, \eqref{eq:window-pdg-periodicity} and the Hom
formula give
\begin{equation}\label{eq:weight-infinite-positive-Hom}
 \Hom_{\Dc}(A,A[2m])
 \cong\Hom_{\Dc}(A,q^{-2pm}A)
 \cong A_{2pm}=\kk t^m\ne0.
\end{equation}
There are no $\partial^{p-1}$-boundaries in this calculation, since the enriched
Hom differential between these copies of $A$ is zero.
Proposition~\ref{prop:positive-Hom-obstruction}, with $X=Y=A$,
now rules out every silting subcategory and every bounded weight
structure on $\Dc$.  The bounded $t$-structure exists by
Theorem~\ref{thm:window-tstructure}.
\end{proof}
Since $A=E_{0}$ is a summand of the Yamaura generator in this example,
that generator is not silting either.  The failure persists even under
a larger gap than the one required by the connected-Hopf theorems.
Thus we do not expect a uniform bounded-weight structure to exist for $\Dc$.

\section{Grading gaps and cellularity}
\label{sec:hopf-gap}
We continue with the setting of Section~\ref{sec:prelim}.   However, the arguments of this section
also allow $H$ to be a graded local finite-dimensional Hopf algebra ($H_0$ may be a finite extension of $\kk$).  Under the stronger,
inclusive gap it proves literal cellularity of every finitely generated
graded-projective module.  The example in Section \ref{subsec:motivating-example} and its variations we will give later are the
motivation for both sufficient hypotheses.

The positivity assumptions of Section~\ref{sec:prelim} do not suffice for
$\Tcat=\Dc$, as will be seen from examples of Section \ref{sec:K0}.  We prove that
a gap equal to or longer than the support of $H$ restores strict cellularity.
However, the grading gap hypothesis is only a sufficient condition, not a necessary one.
The weaker derived result has already been proved for every connected
$H$ in Section~\ref{sec:yamaura}, but an extra degree vanishing results in a more elementary proof.  Section~\ref{sec:gap-pdg-weaker}
specializes it to $p$-DG algebras and adds an independent odd-prime
refinement concerning literal filtrations.  

\subsection{A stronger grading gap condition}
\label{sec:gap-statement}

The main goal of this section is to establish the following stronger cellularity result via more elementary methods.

\begin{thm}
\label{thm:gap-local}
Suppose $A$, $H$ are as in Section \ref{sec:positivity-general}, with $\ell$ being the Gorenstein parameter of $H$. Assume that $A$ satisfies the stronger gap condition:
\begin{equation}\label{eq:gap-local}
 A_1=A_2=\cdots=A_\ell=0.
\end{equation}
Then
\begin{equation}\label{eq:gap-fgproj}
 \Pun=\{M\in B\dgmod |
 M\text{ is finitely generated graded-projective over }A\}.
\end{equation}
In particular, strict graded $B$-module summands of finite-cell modules
are finite-cell.

If, in addition, $\dim_\kk A_n<\infty$ for every $n$, then the essential
finite-cell image is idempotent complete and
\begin{equation}\label{eq:gap-derived}
 \Tcat=\Dc.
\end{equation}
% Consequently, with
% $\OH=\K(H\udgmod)$ for the stable category of finite-dimensional graded
% $H$-modules,
% \begin{equation}\label{eq:gap-K0}
%  \K(\Dc)\cong\bigoplus_x\OH[P_x].
%\end{equation}
\end{thm}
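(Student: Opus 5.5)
The inclusion $\Pun\subseteq\{$f.g.\ graded-projective$\}$ is clear: each $q^dP_x$ is $A$-projective, and cell filtrations split over $A$. For the converse I will induct on $\dim_\kk\bar M$, where $\bar M=M/(A_+M)$ is finite-dimensional because $M$ is finitely generated.

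Let $M\neq0$ be finitely generated graded-projective over $A$, with lowest nonzero degree $b$. Put $W:=H\cdot M_b$, a $B$-stable? no — an $H$-submodule of $M$. Since $H$ lives in degrees $[0,\ell]$, we have $W\subseteq M_{[b,b+\ell]}$. Since $A_+$ starts in degree $\ell+1$ and $M$ starts in degree $b$, we get $(A_+M)_{\le b+\ell}=0$. Hence $W\cap A_+M=0$, and $W$ injects into $\bar M$ as an $A_0\# H$-submodule.

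Next consider the multiplication map $\Phi\colon A\otimes_{A_0}W\to M$, where $A_0$ acts on $W$ through $M$. Because the $H$-action on $A_0$ is trivial, the computation in the proof of Proposition~\ref{prop:contractible-cellular} shows that $\Phi$ is $B$-linear. Reducing modulo $A_+$ gives the inclusion $W\hookrightarrow\bar M$. Since $M$ is $A$-projective, $\bar M$ is a projective $A_0$-module, and $A_0$ is semisimple, so this inclusion splits over $A_0$. Now $A\otimes_{A_0}W$ is graded-projective, and I need $\Phi$ to be injective with graded-projective cokernel. I plan to get this from graded Nakayama. Choose an $A_0$-complement $\bar C$ of $W$ in $\bar M$, lift it to homogeneous elements, and let $C'$ be the free module on the lift. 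Then $A\otimes_{A_0}W\oplus C'\to M$ is surjective by Nakayama. Both sides are graded-projective with the same head, so this map is an isomorphism by the standard minimal-projective-cover argument. Thus $\Phi$ is a split $A$-monomorphism, and $M':=M/\Phi(A\otimes W)$ is graded-projective, finitely generated, a $B$-module, and satisfies $\dim\bar M'=\dim\bar M-\dim W<\dim\bar M$.

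It remains to show that $A\otimes_{A_0}W\in\Pun$. I decompose $W=\bigoplus_x e_xW$, each summand an $H$-module. Since $H$ is graded local, each $e_xW$ has a finite filtration by $H$-submodules with factors $q^d\kk$, or $q^d$ of the simple $H_0$-module in the local case. Tensoring over $A_0$ with $A$ is exact, and $A\otimes_{A_0}q^dL_x\cong q^dP_x$ because the action is trivial, so this gives an $A$-split cell filtration. Concatenating with the cell filtration of $M'$ from induction gives $M\in\Pun$. Summand closure is then immediate, since a strict summand of a finite-cell module is again finitely generated graded-projective.

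For \eqref{eq:gap-derived}, I take $X\in\Dc$ and apply Lemma~\ref{lem:finite-cofibrant-model}; this uses local finiteness, $\dim A_n<\infty$, for the Fitting decomposition. It represents $X$ by a strict $B$-summand $G_{\mathrm{inv}}$ of a finite-cell module. By the first part, $G_{\mathrm{inv}}\in\Pun$, so $X\in\Tcat$, and Corollary~\ref{cor:reformulation} gives idempotent completeness.

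I expect the main obstacle to be the middle step: showing that $\Phi$ is injective with projective cokernel, that is, getting the Nakayama comparison right without assuming $A$ is connected over a field. The place where the inclusive bound $A_\ell=0$ is genuinely needed is $W\cap A_+M=0$, since $W$ reaches degree $b+\ell$. I will watch that inequality carefully.
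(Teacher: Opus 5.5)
Your proposal is correct and follows the paper's proof essentially step by step. Both run induction on $\dim_\kk\bar M$ with $W=H\cdot M_b$, use the support separation $W\cap A_+M=0$ coming from the inclusive gap, split $A\otimes_{A_0}W\hookrightarrow M$ by Nakayama (the paper's Lemma~\ref{lem:gap-lifting} and Corollary~\ref{cor:gap-induced-summand}), and get the cell filtration from a composition series of $W$. For the derived statement both use the algebraic Fitting decomposition, which you cite through Lemma~\ref{lem:finite-cofibrant-model} where the paper reruns it.

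There are two small details to tighten. First, $C'$ should be $A\otimes_{A_0}\widetilde C$ for an $A_0$-linear lift $\widetilde C$ of $\bar C$, not a free $A$-module on a $\kk$-basis, so that the heads really agree when $A_0\neq\kk$. Second, $W$ is $A_0$-stable because $A_0$ commutes with the $H$-action, as the paper checks in \eqref{eq:gap-A0-commutes}.
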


The module-theoretic assertion \eqref{eq:gap-fgproj} does not require local
finiteness of the positive-degree pieces of $A$.  This hypothesis enters
only in the algebraic Fitting argument for derived idempotents.
The proof of \eqref{eq:gap-fgproj} and \eqref{eq:gap-derived} is given in
\S\ref{sec:gap-proof}.
% \eqref{eq:gap-K0} is proved in Section~\ref{sec:K0}.

% \begin{rem}[The degree-zero action]\label{rem:gap-H0}
% If $H_0=\kk$, the gap \eqref{eq:gap-local} forces the $H$-action on $A_0$
% to be trivial: a positive-degree homogeneous element of $H$ sends $A_0$
% into one of $A_1,\ldots,A_\ell$.  If $H_0\ne\kk$, the gap says nothing
% about the action of its augmentation ideal on $A_0$, so the last
% hypothesis in \eqref{eq:gap-local-hypotheses} remains explicit.  No
% assertion that $H_\ell$ is one-dimensional is used in the local case.
% If $\ell=0$, the gap is empty and the proof below still applies.
% For these generalized hypotheses, inverse suspension uses
% $\ker\varepsilon$ in place of $H_+$, and the suspension embedding
% is normalized by $q^{-\deg\Lambda}$.  The support bound in the
% filtration proof uses only $\max\{d:H_d\ne0\}$.
% \end{rem}

\subsection{Proof of Theorem \ref{thm:gap-local}}\label{sec:gap-proof}
 We start by recalling some preparatory results on mod-$A_+$ reduction and lifting, which are nothing but the graded Nakayama's Lemma. We include the proofs for completeness.

\paragraph{A graded-projective lifting lemma.}  
The first lemma concerns only the underlying graded algebra $A$.

\begin{lem}\label{lem:gap-lifting}
Let $A\ge0$ have finite-dimensional semisimple $A_0$, and let $M$ be a
finitely generated graded-projective $A$-module.  Put
$V=M/(A_+M)$.  Every homogeneous $A_0$-linear section\footnote{The section need not be $H$-linear.}
$\sigma:V\to M$ induces an isomorphism
\begin{equation}\label{eq:gap-lifting-iso}
 A\otimes_{A_0}V\xrightarrow{\ \sim\ }M,\qquad
 a\otimes v\longmapsto a\sigma(v)
\end{equation}
of graded $A$-modules.  
\end{lem}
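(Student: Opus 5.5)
The plan is the standard graded Nakayama argument in two steps: surjectivity from generation by the image of $\sigma$, and injectivity from splitting plus a reduction modulo $A_+$.

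Write $\Phi:A\otimes_{A_0}V\to M$ for the map $a\otimes v\mapsto a\sigma(v)$. It is well defined and $A$-linear because $\sigma$ is $A_0$-linear, and it is homogeneous of degree zero because $\sigma$ is homogeneous. Since $A_0$ is semisimple, $V$ is a projective $A_0$-module, so $A\otimes_{A_0}V$ is a finitely generated graded-projective $A$-module. Reducing $\Phi$ modulo $A_+$ gives a map
\[
\bar\Phi : A_0\otimes_{A_0}V=V\longrightarrow \bar M=V .
\]
This map is the identity, because $\sigma$ is a section of the projection $M\to V$.

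\textbf{Surjectivity.} Let $N=\operatorname{im}\Phi$, a graded submodule of $M$ with $N+A_+M=M$. Then $C=M/N$ is finitely generated and graded, and it satisfies $A_+C=C$. Because $A$ is nonnegatively graded and $C$ is finitely generated, $C$ is bounded below. If $C\neq0$, its lowest nonzero degree $c$ could not lie in $A_+C$, since $A_+C$ lives in degrees $>c$. This contradiction gives $C=0$, so $\Phi$ is onto.

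\textbf{Injectivity.} This is the only step that needs care; it is where graded-projectivity of $M$ is used. Since $M$ is graded-projective, the surjection $\Phi$ splits as graded $A$-modules:
\[
A\otimes_{A_0}V\cong M\oplus K, \qquad K=\ker\Phi .
\]
Here $K$ is a direct summand of a finitely generated module, so it is finitely generated. Applying $A_0\otimes_A-$ gives $V\cong \bar M\oplus \bar K$, where the map to $\bar M$ is $\bar\Phi=\mathrm{id}$. Since $V$ is finite-dimensional, this forces $\bar K=0$, that is, $K=A_+K$. The same lowest-degree argument as in the surjectivity step then gives $K=0$.

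No $H$-structure is used anywhere, which is consistent with the footnote that $\sigma$ need not be $H$-linear.
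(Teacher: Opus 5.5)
Your proposal is correct and takes the same route as the paper: graded Nakayama (the lowest-degree argument) applied first to the cokernel and then to the kernel of $a\otimes v\mapsto a\sigma(v)$, after reducing modulo $A_+$. Your ordering is the right one: you prove surjectivity first, and only then use graded-projectivity of $M$ to split the surjection, so that reduction modulo $A_+$ gives $\bar K=0$. This makes explicit a step the paper compresses into the phrase ``both sides are projective''.
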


\begin{proof}
Let $K$ and $C$ be the kernel and cokernel of the map \eqref{eq:gap-lifting-iso}. Since both sides of the map are projective over $A$, the reduction mod $A_+$ gives us that
\[
K/(A_+K)\cong 0 \cong C/(A_+C).
\]
This, in turn, implies that $K=0$ and $C=0$ by counting minimal generator degrees. The result follows. 
\end{proof}

\begin{cor}\label{cor:gap-induced-summand}
Let $M$ be as in Lemma~\ref{lem:gap-lifting}, and let $W\subseteq M$ be
a graded $A_0$-submodule such that $W\cap (A_+M)=0$.
Multiplication induces an injection
\begin{equation}\label{eq:gap-induced-injection}
 A\otimes_{A_0}W\hookrightarrow M
\end{equation}
with image $AW$, and $AW$ is a graded $A$-module direct summand of $M$.
The quotient $M/AW$ is finitely generated graded-projective.
\end{cor}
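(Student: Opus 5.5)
The plan is to reduce everything to Lemma~\ref{lem:gap-lifting}, applied to a section adapted to $W$. Write $\pi:M\to V=M/(A_+M)$ for the quotient map. Since $W\cap(A_+M)=0$, the restriction $\pi|_W:W\to V$ is an injective homogeneous $A_0$-linear map. Put $W'=\pi(W)\subseteq V$.

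Because $A_0$ is semisimple and everything is graded, we can choose a graded $A_0$-complement $V=W'\oplus V''$. We then define a homogeneous $A_0$-linear section $\sigma:V\to M$ as follows. On $W'$ it is the inverse of the isomorphism $\pi|_W:W\to W'$. On $V''$ it is any homogeneous $A_0$-linear lift; such a lift exists because $V''$ is graded-projective over the semisimple algebra $A_0$ and $\pi$ is surjective in each degree.

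Lemma~\ref{lem:gap-lifting} says that $a\otimes v\mapsto a\sigma(v)$ is an isomorphism $A\otimes_{A_0}V\cong M$. The source splits as
\[
A\otimes_{A_0}V=(A\otimes_{A_0}W')\oplus(A\otimes_{A_0}V'').
\]
Under the isomorphism, the first summand corresponds to the multiplication map $A\otimes_{A_0}W\to M$, via $W\cong W'$. That map is therefore injective with image $AW$, and $AW$ is a graded $A$-direct summand of $M$ with complement $A\sigma(V'')$.

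Finally, $M/AW\cong A\otimes_{A_0}V''$, and $V''$ is a finite-dimensional graded $A_0$-module, hence a direct sum of shifted $A_0e_x$. So the quotient is a finite sum of shifted $P_x$, which is finitely generated and graded-projective.

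The only point that needs care is that $\sigma$ agrees with the inclusion $W\hookrightarrow M$ on $W'$, so that the image of the first summand really is $AW$ and not merely something isomorphic to it; the choice of $\sigma$ above was made to guarantee this. The remaining steps, choosing graded complements and degreewise lifts over the semisimple algebra $A_0$, are routine.
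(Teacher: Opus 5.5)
Your proposal is correct and is essentially the paper's proof. Identify $W$ with its image in $V=M/(A_+M)$, choose a graded $A_0$-complement, take a section that restricts to the given inclusion on the image of $W$ and lifts the complement degreewise, and restrict the isomorphism of Lemma~\ref{lem:gap-lifting} to the two summands; you just spell out a little more explicitly why the image is exactly $AW$ and why $M/AW\cong A\otimes_{A_0}V''$ is a finite sum of shifted $P_x$.
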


\begin{proof}
Identify $W$ with its image $\bar W\subseteq V=M/(A_+M)$.
Choose a graded $A_0$-complement $V'$ of $\bar W$, and a section
$\sigma$ that agrees with the prescribed inclusion of $W$ on
$\bar W$.  Lift $V'$ degreewise using semisimplicity.  The
isomorphism \eqref{eq:gap-lifting-iso} restricts to
\eqref{eq:gap-induced-injection} and identifies the underlying module
with
\[
 M\cong(A\otimes_{A_0}W)\oplus(A\otimes_{A_0}V').
\]
Both summands are finitely generated graded-projective, since $V$ is a
finite-dimensional semisimple $A_0$-module.
\end{proof}

\paragraph{Proof of \eqref{eq:gap-fgproj}.} 
 A finite-cell module is finitely generated graded-projective over $A$,
since every step in a cell filtration splits after forgetting $H$.
We prove the converse by induction on
\[
 d(M)=\dim_\kk(M/(A_+M)),
\]
which is a finite-dimensional $\Bbbk$-vector space.

Suppose $M\ne 0$, let $b$ be its smallest nonzero degree, and put
$ W=H\cdot M_b $.
The space $M_b$ is finite-dimensional because $M$ is finitely generated over $A$.
Moreover, $A_0$ commutes with the $H$-action on $M$:
\begin{equation}\label{eq:gap-A0-commutes}
 h(a_0m)=\sum_{h}(h_{(1)}\cdot a_0)(h_{(2)}m)
 =\sum_{h}\varepsilon(h_{(1)})a_0h_{(2)}m=a_0hm.
\end{equation}
Thus $W$ is a finite-dimensional graded $A_0\otimes H$-module.

The entire $H$-submodule $W$ is supported in degrees
$b,\ldots,b+\ell$.  On the other hand, the gap gives
\begin{equation}\label{eq:gap-support-separation}
 W\subseteq\bigoplus_{n=b}^{b+\ell}M_n,\quad \textrm{and} \quad
 (A_+M)_n=0\quad(b< n\le b+\ell).
\end{equation}
Consequently
\begin{equation}\label{eq:gap-disjoint}
 W\cap (A_+M)=0.
\end{equation}
Corollary~\ref{cor:gap-induced-summand} now identifies $AW$ with
$A\otimes_{A_0}W$ and gives a finitely generated graded-projective
quotient $Q=M/AW$.

Equip $A\otimes_{A_0}W$ the tensor product $H$-action
\[
 h(a\otimes w)=\sum_{h}h_{(1)}(a)\otimes h_{(2)}w.
\]
It is well-defined over $A_0$ because the $H$ action on $A_0$ is trivial.
The multiplication map is then $B$-linear by the module-algebra identity.
We have obtained a short exact sequence of graded $B$-modules
\begin{equation}\label{eq:gap-peeling}
 0\longrightarrow A\otimes_{A_0}W
 \longrightarrow M\longrightarrow Q\longrightarrow0
\end{equation}
that splits as graded $A$-modules.

We can filter $W$ whose subquotients are $q$-shifted trivial modules, as $H$ is graded local.
Since $A_0$ is semisimple,
$A\otimes_{A_0}(\mbox{-}))$ is exact.  It turns the filtration on $W$ into a cell
filtration of $A\otimes_{A_0}W$, because
\[
 A\otimes_{A_0}q^nL_x\cong q^nP_x
\]
with the standard $H$-action.  

Reduction of the $A$-split sequence \eqref{eq:gap-peeling} gives
\[
 0\longrightarrow W\longrightarrow\bar M
 \longrightarrow\bar Q\longrightarrow0.
\]
Since $0\ne M_b\subseteq W$,
\[
 d(Q)=d(M)-\dim_\kk W<d(M).
\]
The inductive hypothesis gives a finite-cell filtration of $Q$.
Prefix a cell filtration of $A\otimes_{A_0}W$ to the inverse images of
this filtration under $M\twoheadrightarrow Q$.  This is a cell
filtration of $M$, proving \eqref{eq:gap-fgproj}.

\begin{cor}\label{cor:gap-strict}
Under condition \eqref{eq:gap-local}, every strict
graded $B$-module summand of an object of $\Pun$ belongs to $\Pun$.
\end{cor}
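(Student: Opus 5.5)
The plan is to reduce the corollary directly to the characterization \eqref{eq:gap-fgproj} just established. Let $P\in\Pun$ and suppose $P\cong U\oplus U'$ as graded $B$-modules. I will show that $U$ is a finitely generated graded $B$-module that is graded-projective over $A$. Then \eqref{eq:gap-fgproj} places $U$ in $\Pun$ at once.

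The first step is to forget the $H$-action. A finite-cell filtration of $P$ splits over $A$ at each step, because each subquotient $q^{d_i}P_{x_i}$ is graded-projective over $A$. Hence the underlying graded $A$-module of $P$ is isomorphic to a finite direct sum $\bigoplus_i q^{d_i}P_{x_i}$, which is finitely generated and graded-projective. The decomposition $P\cong U\oplus U'$ is in particular a decomposition of graded $A$-modules. So $U$ is a direct summand of a graded-projective $A$-module, and is therefore graded-projective over $A$. It is finitely generated over $A$, being a quotient of $P$ via the projection. Since $U$ is a $B$-submodule of $P$, it is an object of $B\dgmod$.

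The second step is simply to quote \eqref{eq:gap-fgproj}. Its proof did not use any hypothesis beyond the gap \eqref{eq:gap-local}, $A_0$ semisimple and $H$-trivial, and finite generation together with graded-projectivity of $M$ over $A$. In particular, it did not require local finiteness of $A$. Therefore $U\in\Pun$.

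I expect no real obstacle here. The only point needing care is that ``graded-projective'' in \eqref{eq:gap-fgproj} means projective in the category of graded $A$-modules. Summands of graded-projective modules are graded-projective, so this is immediate. Alternatively, one can see it through Lemma~\ref{lem:gap-lifting}: reducing $U\oplus U'$ modulo $A_+$ shows that $\bar U$ is a finite-dimensional semisimple $A_0$-module.
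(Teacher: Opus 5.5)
Your proposal is correct and follows essentially the same route as the paper: a strict summand of a finite-cell module remains finitely generated and graded-projective over $A$, so \eqref{eq:gap-fgproj} applies to it. Your extra detail, namely that the cell filtration splits over $A$, so the summand inherits graded-projectivity and finite generation, just spells out the paper's one-line argument.
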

\begin{proof}
A strict summand remains finitely generated graded-projective over $A$,
so \eqref{eq:gap-fgproj} applies to it.
\end{proof}

\begin{rem}\label{rem:gap-new-filtration}
The proof constructs a new filtration of the summand.  It does not claims
that the images or intersections of the original filtration have the
same cell quotients.  The support separation
\eqref{eq:gap-support-separation}, rather than indecomposability of a
cell, supplies the graded-projective submodule required for induction.
No classification or bound on the dimensions of indecomposable
$H$-modules is involved.
\end{rem}

\paragraph{Proof of \eqref{eq:gap-derived}}
We now use that each $A_n$ is finite-dimensional.  The standard
cofibrant and compact-generation statements used in
Section~\ref{sec:prelim} hold: the graded
simple $H$-modules are precisely the shifts of $\kk$.
Each $P_x$ is an $H$-equivariant summand of $A$, hence compact and
cofibrant, and finite $A$-split extensions retain these properties.
Every compact object is a derived retract of a finite-cell
object~\cite[Corollaries~6.8, 6.10 and~7.15]{QYHopf}.

Let $M\in\Pun$ and let $e\in\End_{\Der(A,H)}(M)$ be idempotent.
Cofibrancy gives an actual degree-zero $B$-module endomorphism
$a:M\to M$ with $[a]=e$ and $[a]^2=[a]$ in the homotopy category.
The underlying graded $A$-module is a finite sum of shifted $P_x$'s,
so local finiteness implies
\[
 \dim_\kk\End_A^0(M)<\infty.
\]
In particular, $a$ is algebraic over $\kk$.

Factor its minimal polynomial as $t^n g(t)$ with $g(0)\ne0$.
The Chinese Remainder Theorem provides a polynomial projector for the
strict graded $B$-module decomposition
\begin{equation}\label{eq:gap-polynomial-Fitting}
 M=M_{\mathrm{nil}}\oplus M_{\mathrm{inv}},
\end{equation}
where $a$ is nilpotent on the first summand and invertible on the second.
The cases in which one factor of the minimal polynomial is $1$ simply
give a zero summand.  The projections commute with $a$ and are
$B$-linear because they are polynomials in it.
The restrictions of $[a]$ are idempotent.  A nilpotent idempotent is
zero, and an invertible idempotent is the identity; hence
$[a]=0\oplus\Id$ on \eqref{eq:gap-polynomial-Fitting}.
The inclusion and projection of $M_{\mathrm{inv}}$ therefore split $e$.

By Corollary~\ref{cor:gap-strict}, $M_{\mathrm{inv}}\in\Pun$.
Thus the essential finite-cell image is idempotent complete.
It is triangulated: tensoring with the finite-dimensional $H$-modules
that realize suspension and inverse suspension retains finite-cell
filtrations, and standard cones are $A$-split extensions of a
suspension by a finite-cell module.  This is the argument of
Lemma~\ref{lem:T-triangulated}, valid for local $H$ as well.
Every compact object is a derived retract of a finite-cell object,
so derived summand closure gives $\Tcat=\Dc$.

Only a finite-dimensional degree-zero endomorphism algebra was needed
for \eqref{eq:gap-polynomial-Fitting}; the module $M$ itself may be
infinite-dimensional over $\kk$.

% \begin{cor}\label{cor:gap-pdg}
% Let $A\ge0$ be a $p$-DG algebra with finite-dimensional split semisimple
% $A_0$ and $\partial(A_0)=0$.  If
% \[
%  A_1=\cdots=A_{2p-2}=0,
% \]
% then every finitely generated graded-projective $p$-DG module is finite
% cell.  
% %Under local finiteness, its finite-cell image equals its compact
% % derived category, and compact $K_0$ is free on the standard projectives
% % over $\mathbb O_p$.
% \end{cor}
% \begin{proof}
% Apply Theorem~\ref{thm:gap-local} to
% $H=\kk[\partial]/(\partial^p)$, whose top degree is $2p-2$.
% The Grothendieck-group assertion follows from
% Theorem~\ref{thm:K0} below.
% \end{proof}

\subsection{\texorpdfstring{$p$}{p}-DG specialization and literal filtrations}
\label{sec:gap-pdg-weaker}
For $H=\kk[\partial]/(\partial^p)$ with $\mathrm{deg}(\partial)=2$, the Gorenstein parameter of the Hopf algebra is
$\ell=2p-2$. 
However, for odd $p$, we can drop the extra vanishing condition $A_{2p-2}=0$ from Theorem \ref{thm:gap-local} by carefully modifying its proof.  The idempotent completeness result in $\Pun$ is not merely a consequence of the bounded $t$-structure (Theorem \ref{thm:window-tstructure}).

\begin{cor}\label{cor:gap-pdg-weaker}
Let $\operatorname{char}\kk=p$, and let $(A,\partial_A)$ be a nonnegatively graded $p$-DG algebra with $\deg(\partial_A)=2$, finite-dimensional split semisimple $A_0$, and $\partial_A(A_0)=0$. Assume
\begin{equation}\label{eq:gap-pdg-weaker}
 A_i=0\qquad(0<i<2p-2).
\end{equation}
\begin{enumerate}
\item[(i)] If $p>2$, then
$$
\mathcal P_B
=
\left\{
M\in B\text{-gmod}\ \middle|\ 
M \text{ is finitely generated graded-projective over }A
\right\}.
$$
% More precisely, if
% $$
% \bar M
% \cong
% \bigoplus_{\lambda}
% q^{b_\lambda}
% \bigl(L_{x_\lambda}\otimes V_{r_\lambda}\bigr),
% \qquad
% 1\le r_\lambda\le p,
% $$
% where $V_r=\kk[\partial]/(\partial^r)$ has its generator in degree zero, then $M$ admits a finite-cell filtration whose subquotients, counted with multiplicity, are
% $$
% q^{\,b_\lambda+2j}P_{x_\lambda},
% \qquad
% 0\le j<r_\lambda.
% $$
In particular, $\mathcal P_B$ is closed under strict $p$-DG direct summands.

\item[(ii)] For every prime $p$, if each $A_n$ is finite-dimensional, then
$\Dc$ has the bounded $t$-structure of
Theorem~\ref{thm:window-tstructure}, with simple heart objects
$q^{-r}P_x$ for $0\le r<2p-2$, while $\Der(A,H)$ has the weight
structure of Theorem~\ref{thm:connected-large-weight}.  Moreover,
\begin{equation}\label{eq:gap-pdg-weaker-derived}
 \Tcat=\Dc.
 \end{equation}
Thus the essential finite-cell image is idempotent complete. 
\end{enumerate}
\end{cor}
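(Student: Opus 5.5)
Part (ii) is a direct specialization, so I would dispose of it first. For $H=\kk[\partial]/(\partial^p)$ with $\deg\partial=2$ the Gorenstein parameter is $\ell=2p-2$. The hypothesis \eqref{eq:gap-pdg-weaker} is then exactly the open gap \eqref{eq:connected-open-gap}. The condition $\partial_A(A_0)=0$ is the $H$-triviality of $A_0$, because $\partial$ is primitive and $\partial^k$ acts by zero on $A_0$ for $k\ge1$. Hence Theorem~\ref{thm:window-tstructure} gives the bounded $t$-structure with simple heart objects $q^{-r}P_x$ for $0\le r<2p-2$, together with $\Tcat=\Dc$, for every prime $p$ including $p=2$. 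Theorem~\ref{thm:connected-large-weight} gives the weight structure on $\Der(A,H)$. Idempotent completeness of the finite-cell image follows because $\Dc$ is idempotent complete. No new argument is needed here.

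For part (i) I would rerun the peeling induction on $d(M)=\dim_\kk\bar M$ from the proof of \eqref{eq:gap-fgproj}. Let $b$ be the lowest degree of $M$. The new feature is that $A_{2p-2}$ may be nonzero, so the support separation \eqref{eq:gap-support-separation} holds only in degrees $b<n<b+2p-2$. It can fail in the single endpoint degree $b+2p-2$, where $(A_+M)_{b+2p-2}=A_{2p-2}M_b$.

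Here I would use that $\partial$ has even degree together with the parity of $p$. For $m\in M_b$, the elements $\partial^jm$ with $0\le j\le p-2$ lie in degrees $b,b+2,\dots,b+2p-4$. All of these are $<b+2p-2$, so they lie where $A_+M$ vanishes. Only $\partial^{p-1}m$ can meet $A_+M$. The hypothesis $p>2$ guarantees that $p-1\ge2$, so at least one intermediate step $\partial m$ sits strictly inside the gap. This is what fails for $p=2$, where there is no gap at all and Example~\ref{ex:badU} is the obstruction.

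Next I would choose a Jordan decomposition of the $A_0\otimes H$-module generated by the image of $M_b$ in $\bar M$. This gives strings $L_x\otimes V_r$ with generators $m_\lambda\in M_b$. For strings with $r<p$, the lifted chains $m_\lambda,\partial m_\lambda,\dots,\partial^{r-1}m_\lambda$ satisfy $\partial^rm_\lambda\in A_{2p-2}M_b$ only when $r=p-1$. For strings of length $p$, the full chain injects into $\bar M$ and spans a free $H$-summand, which can be split off as in the inclusive-gap proof.

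The main obstacle is the length-$(p-1)$ strings whose top differential $\partial^{p-1}m_\lambda=\sum a_\mu m_\mu$ with $a_\mu\in A_{2p-2}$ is nonzero but vanishes modulo $A_+$. I would handle these by building a \emph{lifted-chain} cell filtration rather than an $H$-stable $W$. Each chain gives cells $q^{b+2j}P_x$, $0\le j<p-1$, whose $\partial$ is upper triangular modulo the error term $a_\mu m_\mu$. The error lies in $A\cdot M_b$ and lands in the $A$-span of the \emph{bottom} generators $m_\mu$.

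So I would order all cells so that every bottom generator $m_\mu$ appears before every top element $\partial^{p-2}m_\lambda$. Concretely, take all bottom cells $q^bP_{x_\lambda}$ first, then the next layer, and so on up the chains. The differential is then strictly triangular, because $\partial(\partial^{p-2}m_\lambda)$ lies in the span of bottom cells. The quotient of $M$ by the $A$-span of all the chain generators is graded-projective by Corollary~\ref{cor:gap-induced-summand}. That corollary applies because the chain elements, excluding the tops $\partial^{p-1}$, are $A_0$-independent modulo $A_+M$. The quotient is also $B$-stable, and its $d$ is strictly smaller, so induction closes the argument.

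Closure of $\Pun$ under strict summands then follows exactly as in Corollary~\ref{cor:gap-strict}. The delicate verification I expect to spend the most effort on is checking that the reordered layered filtration is $B$-stable, that is, that no error term of $\partial$ reaches a later layer. This is where $\deg\partial=2$ and $p$ odd must be used carefully.
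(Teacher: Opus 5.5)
Your treatment of part (ii) is correct and matches the paper. Part (i), however, has a genuine gap: the layered order you propose is not a cell filtration.

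Each step of a cell filtration must be $\partial$-stable, so $\partial$ of every generator must lie in the $A$-span of \emph{earlier} generators. Along a lifted chain $\partial(\partial^jm_\lambda)=\partial^{j+1}m_\lambda$, so chains must be listed from the top down. Listing the bottom cells $q^bP_{x_\lambda}$ first already fails, since $\partial m_\lambda\notin Am_\lambda$ for a chain of length at least two.

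Reversing the chains does not fix this on its own. An error term $\partial^{p-1}m_\lambda=\sum_\mu a^\lambda_\mu m_\mu$ forces each $m_\mu$ with $a^\lambda_\mu\ne0$ to precede $\partial^{p-2}m_\lambda$. But $m_\mu$ must also come after the rest of its own chain. So if $a^\lambda_\lambda\ne0$, or if the errors link long chains cyclically, no ordering of these generators works.

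Your triangularity claim (``the error lands in the span of bottom cells'') applies verbatim to the higher-cycle module $T_r$ of Section~\ref{sec:gap-examples}, where $\partial z_j=z_{j+1}$ and $\partial z_{r-1}=xy\,z_0$. That module is not finite cell by Proposition~\ref{prop:gap-counterexamples}. So something specific to the gap must be used to control the errors, and your argument never uses it.

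The missing idea is the paper's Case~2 computation, which shows the errors can only hit length-one chains. Write $\partial^{p-1}u_0=\sum_ja_jv_{j,0}$ with $a_j\in A_{2p-2}$, where $v_{j,0}$ lifts the bottoms of the Jordan blocks beginning in degree $b$. Since $p>2$, degree $b+2$ lies strictly inside the gap. Hence you may choose the section with $v_{j,1}=\partial v_{j,0}$ when $r_j>1$, while $\partial v_{j,0}=0$ is forced when $r_j=1$. Applying $\partial$ once more gives
\[
0=\partial^pu_0=\sum_j\partial_A(a_j)\,v_{j,0}+\sum_{r_j>1}a_j\,v_{j,1}.
\]
Comparing $v_{j,1}$-coefficients in $M\cong A\otimes_{A_0}\bar M$ gives $a_j=0$ whenever $r_j>1$. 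This coefficient comparison, not the mere existence of an intermediate degree, is where $p>2$ enters.

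The paper then peels off one block of minimal length at degree $b$. Length-one blocks are cycles and go first; a minimal block of length $p-1$ then has $\partial^{p-1}u_0=0$, so $Hu_0$ meets $A_+M$ trivially. Your simultaneous peeling of $A\cdot HM_b$ could be salvaged with the order ``all length-one bottoms first, then every other chain from the top down,'' but only after this lemma.

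Finally, two side remarks are inaccurate. For $p=2$ the gap is $A_1=0$, so it is not empty. The relevant obstruction there is Example~\ref{ex:weak-endpoint-acyclic}, not Example~\ref{ex:badU}, which has $A_1\ne0$.
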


For $p=2$, condition \eqref{eq:gap-pdg-weaker} is $A_1=0$;
assertion~\textup{(i)} is not claimed in this characteristic.

\begin{proof}
Part~\emph{(ii)} follows directly from Theorems \ref{thm:window-tstructure} and~4 \ref{thm:connected-large-weight}, since for
$H=\kk[\partial]/(\partial^p)$ with $\deg\partial=2$ one has
$\ell=2p-2$, and \eqref{eq:gap-pdg-weaker} is precisely the connected open-gap hypothesis. Only \emph{(i)} requires a proof. 

The proof below is a more careful modification of that of Theorem \ref{thm:gap-local}. For $H=\kk[\partial]/(\partial^p)$, $\ell=2p-2$ as $\mathrm{deg}(\partial)=2$.

Let $M$ be a finitely generated graded-projective module over $A$. We induct on
\begin{equation}\label{eqn}
d(M):=\dim_\kk\bar{M},
\quad \textrm{where} \quad
\bar{M}=M/(A^+M).
\end{equation}
The assertion is clear for $\bar{M}=0,1$.   

The differential on
$\bar{M}=M/(A_+ M)$ is $A_0$-linear.  Decompose its simple
multiplicity spaces into homogeneous Jordan chains:
\begin{equation}\label{eq:barM-decomp}
 \bar{M}\cong\bigoplus_i
 q^{b_i}\left(L_{x_i}\otimes V_{r_i}\right),
 \qquad 1\le r_i\le p.
\end{equation}
Here $V_r\cong \kk[\partial]/(\partial^{r})$.
Let $ b=\min_j\{ b_i\}$, and, among the summands starting in degree $b$, choose one for which $r_i$ is minimal. Write
$x=x_i$, and $r=r_i$.
Lift the lowest-degree generator of this summand $A_0$-linearly to an element $u_0\in M_b$, and put
$u_k:=\partial_M^k u_0$, where $k=0,\dots, r-1$.

By Lemma \ref{lem:gap-lifting}, the underlying graded $A$-module is obtained from $\bar{M}$ by extension of scalars. Since all generators of $M$ have degree at least $b$, condition~\eqref{eq:gap-pdg-weaker} gives the basic support separation
\begin{equation}\label{eq:support-sep-pdg}
(A^+M)_n=0
\qquad
(b\le n<b+\ell).
\end{equation}

We now discuss three cases of what can happen at the endpoint of the chosen Jordan chain. 

Case 1: If $r\le p-2$, then $\bar{u}_r=0$ and
$\deg u_r=b+2r<b+\ell$,
then $u_r\in A^+M$ and \eqref{eq:support-sep-pdg} gives $u_r=0$.

Case 2: If $r=p-1$, then
$ w:=\partial_M^{p-1}u_0\in (A^+M)_{b+\ell}$.   
Degree considerations show that $w$ is an $A_\ell$-linear combination of lifts $v_{j,0}$ of the generators of the Jordan blocks beginning in degree $b$:
\begin{equation}\label{eq:a-combination}
w=\sum_{b_j=b}a_j v_{j,0},
\qquad
a_j\in A_\ell.
\end{equation}
For a block of length $r_j>1$, we may take
$v_{j,1}=\partial_Mv_{j,0}$,
since degree $b+2$ is still below $b+\ell$. For a block with $r_j=1$, the same support argument gives $\partial_Mv_{j,0}=0$. Applying $\partial_M$ to \eqref{eq:a-combination} and using $\partial_M^p u_0=0$ gives
\begin{equation}\label{eq:dw-equals-zero}
0
=
\sum_{b_j=b}\partial_A(a_j)v_{j,0}
+
\sum_{\substack{b_j=b\\ r_j>1}}
a_j v_{j,1}.
\end{equation}
Now, choose the graded $A_0$-section in Lemma~\ref{lem:gap-lifting} to lift $v_{j,0}$ and $v_{j,1}$ in appearing equation \eqref{eq:dw-equals-zero}. They then belong to distinct graded-projective $A$-summands, so comparison of the $v_{j,1}$-coefficient in \eqref{eq:dw-equals-zero} gives $a_j=0$ whenever $r_j>1$.
Thus, a nonzero endpoint of a lifted $V_{p-1}$-chain can land only in length-one chains beginning in the same degree $b$. But our chosen block has minimal length among all blocks beginning in degree $b$. Since its length is $p-1>1$, no such length-one block exists. Hence $w=0$.

In these two cases, we have therefore proved that, if $r\leq p-1$, then $
\partial_M^{p-1}u_0=0$. Then the $H$-submodule $W:=H u_0$
is supported in degrees at most
$b+2p-4=b+\ell-2$.
Consequently, condition \eqref{eq:gap-pdg-weaker} gives $W \cap A^+M=0$.
Reduction modulo $A^+M$ therefore identifies $Hu_0$ with the chosen summand
$q^b(L_x\otimes V_r)\subset\bar{M}$.
Corollary~\ref{cor:gap-induced-summand} now gives an injective $B$-module map
$$
A\otimes_{A_0}W\hookrightarrow M
$$
whose quotient $Q$ is again finitely generated graded-projective over $A$. Moreover,
$$
A\otimes_{A_0}W
\cong
q^b(P_x\otimes V_r) \in \Pun,
$$
As
$d(Q)=d(M)-r\dim_\kk L_x<d(M)$,
the induction hypothesis applies to $Q$. Prefixing the above filtration to the inverse images of a cell filtration of $Q$ gives the required cell filtration of $M$.

Case 3: Finally, $r=p$. Then $\partial_M^{p-1}u_0\ne 0$. The reductions of $\{u_0,u_1,\ldots,u_{p-1}\}$
are the $p$ distinct generators of the selected $V_p$-summand, so
$W\cap A^+M=0$.
Then, Corollary \ref{cor:gap-induced-summand} gives
\begin{equation}\label{eq:free-B-summand}
AW
\cong
q^b(P_x\otimes H)
\cong
q^bBe_x.
\end{equation}
This is a contractible relative projective-injective module. Hence the resulting $A$-split short exact sequence
$$
0\longrightarrow q^bBe_x
\longrightarrow M
\longrightarrow Q
\longrightarrow0
$$
splits as a sequence of $B$-modules. Thus this $p$-cell summand may be split off, and the induction hypothesis applies to $Q$.

In all three cases, we have $d(Q)<d(M)$, so the induction terminates. The result follows.
\end{proof}

\begin{rem}[Correction to {\cite[Theorem 2.17]{EQ1}}]\label{rem:gap-scope}
We point out a mistake we made in \cite[Theorem 2.17]{EQ1}, where it was claimed that
$\Tcat \subset \mc{D}^c(A,\partial)$ is an equivalence of categories for a general $A$ as in Section \ref{sec:prelim} without the gap condition
\begin{equation}
A_1=A_2=\cdots=A_{2p-3}=0.
\end{equation}
Corollary \ref{cor:gap-pdg-weaker} does not hold in general, as will be demonstrated by examples in Section~\ref{sec:gap-examples}. However, as \cite[Theorem 2.7]{EQ1} is only employed to study the cases of $p$-DG algebras of the form $\Bbbk[x]$, $\mathrm{Sym}_n$ (including $n=\infty$), and some other finite-rank $p$-DG algebras over these rings, Theorem \ref{thm:window-tstructure} (or Theorem \ref{thm:gap-local}) can be used as a replacement. This is because these $p$-DG algebras utilized in \cite{EQ1} are all quasi-isomorphic to some $p$-DG subalgebras satisfying the gap conditions.  For instance, in the case of the $p$-DG algebra
$(A=\Bbbk[x],\partial_A=x^2\dfrac{\partial}{\partial x})$, it
is quasi-isomorphic to $\kk$ with the zero $p$-differential under the natural inclusion map $\iota:\kk \to A$. Then the restriction functor along $\iota$
\[
\iota_*:\Der^c(A,\partial_A)\xrightarrow{\cong} \Der^c(\kk,\partial_\kk) \cong H\udgmod
\]
is an equivalence of triangulated categories \cite[Corollary 8.18]{QYHopf}. Theorem \ref{thm:window-tstructure} (or Theorem \ref{thm:gap-local}) then fixes this mistake in \cite{EQ1} using such derived equivalences.
\end{rem}

\begin{rem}
\label{rem:weak-grouped-comparison}
For $p=2$, the preceding derived proof is independent of an ordinary
DG realization.  Such a realization still explains the relationship
with Schn\"urer's integer-graded theorem.  Put
\begin{equation}\label{eq:weak-grouped-algebra}
 \widehat{A}_{n}
 =\begin{pmatrix}A_{2n}&A_{2n-1}\\A_{2n+1}&A_{2n}\end{pmatrix},
 \qquad
 \widehat{E}(M)_n=\begin{pmatrix}M_{2n}\\M_{2n+1}\end{pmatrix}.
\end{equation}
Matrix multiplication and entrywise differential make
$\widehat{A}$ an ordinary DG algebra in characteristic two.
The two diagonal idempotents recover the two parity classes, and
$\widehat{E}$ gives a derived equivalence preserving compact objects.
If $A_1=0$, then
$\widehat{A}_{0}=A_0\times A_0$, and its differential vanishes
on degree zero.  The projectives induced from the two copies of $e_x$
are $\widehat{E}(P_x)$ and $\widehat{E}(qP_x)$, and
$\widehat{E}(q^2M)=\widehat{E}(M)[-1]$.
Thus~\cite[Theorem~1]{SchPos} provides another proof of finite-cell
representatives.  This comparison does not identify the two chosen
hearts: the window $\{P_x,q^{-1}P_x\}$ differs from
$\{P_x,qP_x\}$, although their triangulated closures agree.
\end{rem}

\begin{example}
\label{ex:weak-endpoint-acyclic}
The conclusion about all finitely generated graded-projective modules
cannot be added to Corollary~\ref{cor:gap-pdg-weaker} when $p=2$.
Let $\operatorname{char}\kk=2$ and
\begin{equation}\label{eq:weak-endpoint-algebra}
 A=\kk[x,y,z]/(x^2+yz),\qquad
 \mathrm{deg}(x)= \mathrm{deg}(y)= \mathrm{deg}(z)=2,\qquad\partial_A=0.
\end{equation}
This locally finite algebra has $A_0=\kk$ and $A_1=0$, but $A_2\ne0$.
On $M=Au_1\oplus Au_2$, with $\mathrm{deg}(u_1)=\mathrm{deg}(u_2)=0$, put
\begin{equation}\label{eq:weak-endpoint-matrix}
 \partial_M=
 \begin{pmatrix}
 x&z\\
 y&x
 \end{pmatrix}.
\end{equation}
Then $\partial_M^2=0$.  Since $\bar M=M/(A_+M)$ is concentrated in
degree zero, any cell filtration would have only cells generated in
that degree.  Its first cell would be generated by
$g=\alpha u_1+\beta u_2$ with $\alpha,\beta\in\kk$, not both zero,
and would satisfy $\partial_Mg=0$.  But the independence of
$x,y,z$ in $A_2$ forces $\alpha=\beta=0$.
Hence $M\notin\Pun$.

Nevertheless, $M$ is acyclic.  Consider the polynomial ring $S=\kk[x,y,z]$,
$f=x^2+yz\in S$, and let $ D$ be the same matrix over $S$.
Then $ D^{2}=fI_2$.  If $\partial_M(v)=0$ in $M$, choose a lift
$\widetilde v\in S^2$ and write
$D\widetilde v=f\widetilde w$.
Applying $ D$ gives
$f\widetilde v=f\widetilde D\widetilde w$.
Cancellation of the nonzerodivisor $f$ and reduction modulo $f$ give
$v=\partial_M w$, where $w=\widetilde{w}~\mathrm{mod}~f$.  The lifts may be chosen homogeneously, so
$\Ke \partial_M =\operatorname{Im} \partial_M$ in every degree.
On the other hand,
$\HOM_A^{-2}(M,M)=0$, since $A$ is nonnegative and both generators
have degree zero.  Thus no contracting homotopy exists.
An acyclic cofibrant module is contractible by Lemma \ref{lem:Hom}; hence $M$ is not cofibrant and cannot be a
strict summand of a finite-cell module.

In particular, $M$ is zero in the derived category and so has the
zero finite-cell representative.  This example does not contradict
\eqref{eq:gap-pdg-weaker-derived}.  Its ordinary reduction
$\bar M\cong\kk^2$ does not compute its derived reduction, which is
zero.  It also pinpoints the endpoint of the support argument:
\[
 W=H\cdot M_0=M_0+\partial_M (M_0),\qquad
 W\cap(A_+M)=\partial_M (M_0)\ne0.
\]
Thus the strong support separation fails, even though derived
cellularity survives.
\end{example}

% \begin{rem}[What has and has not been sharpened]\label{rem:weak-gap-scope}
% Theorem~\ref{thm:window-tstructure} removes $A_\ell=0$ from the
% conditions.  For odd $p$, the independent proof above also
% removes it from the literal-filtration theorem, using
% \eqref{eq:weak-p-nilpotence}--\eqref{eq:weak-only-singletons}.
% For $p=2$, a length-$(p-1)$ chain is already a length-one chain,
% so that literal argument does not apply.
% Example~\ref{ex:weak-endpoint-acyclic} distinguishes the two
% conclusions; it does not prove sharpness of the derived bound.
% For general local $H$ with $H_0\ne\kk$, the theorem proved here
% remains the inclusive condition $A_1=\cdots=A_\ell=0$ of
% Theorem~\ref{thm:gap-local}.  No open-gap conclusion in that larger
% generality is asserted.
% \end{rem}

\section{The Grothendieck group and examples}\label{sec:K0}
In this section, we discuss the consequences of Theorem \ref{thm:window-tstructure} and \ref{thm:gap-local} on the level of Grothendieck groups, and also give some examples illustrating the failure of these theorems when the gap conditions are not imposed. These examples are adapted from \cite{BoocherDeVries}.

For this section, $H$ and $A$ are assumed to satisfy the hypotheses of Section \ref{sec:prelim}. No grading gap on $A$ is assumed unless explicitly stated.

\subsection{The basis from the connected-Hopf bounded heart}\label{sec:K0-connected-heart}
Write
\[
\operatorname{dim}_q V:=\sum_{d\in\mathbb Z}(\dim_\kk V_d)q^d
\]
for the graded dimension of a finite-dimensional graded vector space. 
For a finite-dimensional graded local $H$ we use
\[
 \mathbb{O}_H:=\K(H\udgmod)
 \cong\mathbb Z[q,q^{-1}]/(\dim_qH).
\]
The tensor action of \eqref{eq:H-action-triangulated} makes $\K(\Dc)$ an $\OH$-module.

\begin{prop}
\label{prop:K0-heart-basis}
Under the gap hypothesis $A_1=\dots=A_{\ell -1}=0$ of
Theorem~\ref{thm:window-tstructure}, the classes
\begin{equation}\label{eq:K0-heart-Z-basis}
 \{[q^{-r}P_x]:x\in X,\ 0\le r<\ell\}
\end{equation}
form a $\mathbb Z$-basis of $\K(\Dc)$.  Furthermore, this is the integral
basis of the free $\mathbb O_H$-module on the $[P_x]$.
\end{prop}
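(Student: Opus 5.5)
The plan is to use the bounded $t$-structure of Theorem~\ref{thm:window-tstructure} and the standard fact that a bounded $t$-structure with a finite-length heart identifies $\K$ of the triangulated category with $\K$ of the heart. Three steps are needed.

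\emph{Step 1: the integral basis.}
\begin{itemize}
\item[(a)] Boundedness means every $M\in\Dc$ has a finite filtration by truncation triangles with factors $\mathcal H^i(M)[-i]$ in shifted heart objects. This gives the inverse map $[M]\mapsto\sum_i(-1)^i[\mathcal H^i(M)]$, so the inclusion of the heart induces an isomorphism $\K(\mathcal H^t)\cong\K(\Dc)$.
\item[(b)] The heart is finite length with simple objects exactly the $S_{x,r}=q^{-r}P_x$. Jordan--H\"older therefore makes $\K(\mathcal H^t)$ free abelian on their classes.
\item[(c)] Together, (a) and (b) show the classes \eqref{eq:K0-heart-Z-basis} form a $\mathbb Z$-basis.
\end{itemize}
The only care needed in (a) is that $\K(\Dc)$ is the triangulated Grothendieck group of the idempotent complete category $\Dc$. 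Since the heart is abelian and the $t$-structure is bounded, both maps are well defined: additivity on triangles follows from the long exact cohomology sequence.

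\emph{Step 2: the $\mathbb O_H$-module structure.} First I identify $\mathbb O_H$ as an abelian group. In $H\udgmod$ every module has a filtration by shifts $q^n\kk$, and $[H]=0$. So $\mathbb O_H=\mathbb Z[q^{\pm1}]/(\dim_qH)$. Since $H$ is connected with top degree $\ell$ and $\dim H_0=\dim H_\ell=1$ (Lemma~\ref{lem:socle}), $\dim_qH$ is a polynomial of degree $\ell$ with unit leading and constant coefficients. Hence $\mathbb O_H$ is free over $\mathbb Z$ with basis $\{q^{-r}:0\le r<\ell\}$.

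Next I compare $\bigoplus_x\mathbb O_H[P_x]$ with $\K(\Dc)$. Consider the $\mathbb O_H$-linear map
\[
\Phi:\bigoplus_x\mathbb O_H\to\K(\Dc),\qquad e_x\mapsto[P_x].
\]
It is well defined because $[P_x\otimes V]$ depends only on the class of $V$ in $\mathbb O_H$. The support of this depends on two facts:
\begin{itemize}
\item $P_x\otimes H$ is contractible, hence zero in $\Dc$.
\item $P_x\otimes V$ is filtered by $P_x\otimes q^n\kk$ (Lemma~\ref{lem:Pun-property}), so $[P_x\otimes V]=\dim_q(V)\cdot[P_x]$.
\end{itemize}
Under $\Phi$, the $\mathbb Z$-basis $\{q^{-r}e_x\}_{x,\,0\le r<\ell}$ of the source maps to $[q^{-r}P_x]$, which by Step 1 is a $\mathbb Z$-basis of the target. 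Therefore $\Phi$ is an isomorphism. This shows that $\K(\Dc)$ is free over $\mathbb O_H$ on the $[P_x]$, with the stated integral basis.

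\emph{Main obstacle.} The delicate point is the claim that $\{1,q^{-1},\dots,q^{-(\ell-1)}\}$ is a $\mathbb Z$-basis of $\mathbb Z[q^{\pm1}]/(\dim_qH)$. This needs both end coefficients of $\dim_qH$ to be $\pm1$. The constant coefficient is $1$ because $H_0=\kk$, and the top coefficient is $1$ by Lemma~\ref{lem:socle}. Given these, division by $\dim_qH$ reduces any Laurent polynomial to this window in both directions, and a rank count gives independence.

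I also need that $\K(H\udgmod)$ is exactly this quotient. I would quote this as the standard computation: $H\udgmod$ is the stable category of graded $H$-modules, its Grothendieck group is the abelian-module $\K$ modulo projectives, and $\K(H\dgmod)=\mathbb Z[q^{\pm1}]$ via composition factors.
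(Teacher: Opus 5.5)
Your proposal is correct and essentially the paper's proof. You identify $\K(\Dc)$ with the Grothendieck group of the finite-length heart of Theorem~\ref{thm:window-tstructure}, read off the $\mathbb Z$-basis $[q^{-r}P_x]$ from its simple objects, and then use that $\dim_q H$ has constant and leading coefficients equal to $1$, so that $\mathbb O_H$ has $\mathbb Z$-basis $1,q^{-1},\dots,q^{-(\ell-1)}$. Your explicit map $\Phi$, which sends this basis to the heart basis, only spells out the final identification that the paper leaves implicit.
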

\begin{proof}
A bounded $t$-structure identifies the Grothendieck group of its
triangulated category with that of its heart: the inverse to the
inclusion of the heart sends $[M]$ to the finite sum
$\sum_n(-1)^n[\mH_t^n(M)]$.
By Theorem~\ref{thm:window-tstructure}, the heart is a finite-length
category with simple objects precisely $q^{-r}P_x$ in the displayed
window.  Its Grothendieck group is freely generated by these simple
classes, proving \eqref{eq:K0-heart-Z-basis}.

Since $H_0=\kk$ and $H_\ell=\kk\Lambda$, the polynomial
$\dim_q H$ is monic of degree $\ell$, and has constant
coefficient $1$.  Consequently
\[
 \mathbb O_H=\mathbb Z[q,q^{-1}]/(\dim_q H)
\]
has the $\mathbb Z$-basis $1,q^{-1},\ldots,q^{-(\ell-1)}$.
For example, replace $q$ by $z^{-1}$ and use the monic polynomial
$z^\ell h_H(z^{-1})$ with constant coefficient $1$.
\end{proof}

\subsection{The rank character and the cell lattice}
\label{sec:K0-rank}

% \begin{equation}\label{eq:OH}
% \OH:=
% \mathbb Z[q,q^{-1}]\big/\bigl(\operatorname{dim}_qH\bigr).
% \end{equation}
% Since $H$ is local, its graded simple modules are $q^n\kk$ and its
% finitely generated graded projectives are sums of shifts of $H$.
% The Grothendieck group of the stable category is the quotient by these
% projective classes, so
% \[
%  \OH=\K(H\udgmod).
% \]
%  Locality makes
% $\OH$ the displayed commutative quotient, even when $H$ is not
% cocommutative.  We also verify the defining relation directly.

% \begin{prop}\label{prop:relation}
% For every $Y\in\Dc$,
% \[
% (\operatorname{dim}_qH)[Y]=0
% \qquad\text{in }\K(\Dc).
% \]
% Consequently $\K(\Dc)$ is naturally an $\OH$-module.
% \end{prop}

% \begin{proof}
% Since $H$ is local and nonnegatively graded, the finite-dimensional regular
% $H$-module has a finite filtration by graded $H$-submodules whose successive
% quotients are shifts $q^{d_j}\kk$, with
% \[
% \operatorname{dim}_qH=\sum_jq^{d_j}.
% \]
% Tensoring this filtration with a representative of $Y$ gives a filtration of
% $Y\otimes H$ whose successive quotients are $q^{d_j}Y$.  Hence
% \[
% [Y\otimes H]=(\operatorname{dim}_qH)[Y].
% \]
% But $Y\otimes H$ is contractible, and therefore zero in $\Dc$.
% \end{proof}

For a finitely generated cofibrant module $M$, put
\begin{equation}\label{eq:bar-reduction}
\bar M:=M/(A_+M).
\end{equation}
By the assumptions on $A_0$,
\begin{equation}\label{eq:bar-decomposition}
\bar M=\bigoplus_{x\in X}e_x\bar M
\end{equation}
as graded $H$-modules.

\begin{prop}\label{prop:rank-character}
For $x\in X$, the assignment
\[
\rho_x(M):=
\operatorname{dim}_q(e_x\bar M)
\pmod{\operatorname{dim}_qH}
\]
induces an $\OH$-linear homomorphism
\[
\rho_x:\K(\Dc)\longrightarrow\OH.
\]
Together these give
\begin{equation}\label{eq:rank-on-cells}
\rho=(\rho_x)_{x\in X}:
\K(\Dc)\longrightarrow\bigoplus_{x\in X}\OH,
\quad
\textrm{and}
\quad
\rho_y(q^dP_x)=\delta_{xy}q^d.
\end{equation}
\end{prop}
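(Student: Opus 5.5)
The plan is to define $\rho_x$ first on objects of $\Ecat$ (finitely generated cofibrant modules, which represent every object of $\Dc$ by Lemma~\ref{lem:frobenius}), and then show that it respects the three relations defining $\K(\Dc)$: isomorphism in $\Dc$, distinguished triangles, and the $\OH$-action. The reduction $M\mapsto\bar M$ is the key tool. For an $A$-split short exact sequence $0\to M'\to M\to M''\to0$ of $B$-modules, applying $A_0\otimes_A-$ (right exact, and exact on $A$-split sequences) gives a short exact sequence of graded $A_0\otimes H$-modules $0\to\bar M'\to\bar M\to\bar M''\to0$. Cutting by the central idempotent $e_x$ then shows that $\dim_q(e_x\bar M)$ is additive on conflations in $\Ecat$.

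\emph{Invariance under derived isomorphism.} Since objects of $\Ecat$ are cofibrant, a derived isomorphism is an isomorphism in the stable category $\E(A,H)$. Heller's theorem, as used in Proposition~\ref{prop:stably-cellular-char}, then gives $U\oplus T\cong Q\oplus J$ with $T,J$ contractible. By Proposition~\ref{prop:contractible-cellular}, each contractible module is a sum $\bigoplus q^d(P_y\otimes H)$. Its reduction is $\bigoplus q^d(L_y\otimes H)$, so $\dim_q(e_x\bar T)$ is a multiple of $\dim_q H$, and hence $\rho_x(U)=\rho_x(Q)$ in $\OH$.

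\emph{Additivity on triangles and $\OH$-linearity.} Every distinguished triangle in $\Dc$ is isomorphic to a standard cone triangle $M\to N\to C_f\to M[1]$. This triangle comes from the $A$-split sequence $0\to N\to C_f\to M[1]\to0$, so $\rho_x(C_f)=\rho_x(N)+\rho_x(M[1])$. Tensor compatibility follows from $\overline{M\otimes V}=\bar M\otimes V$, as $H$-modules after cutting by $e_x$, so that $\dim_q$ is multiplicative: $\rho_x(M\otimes V)=\rho_x(M)\dim_q V$. Taking $V=H/\kk\Lambda$ shifted by $q^{-\ell}$ gives
\[
\rho_x(M[1])\equiv q^{-\ell}\bigl(\dim_qH-q^{\ell}\bigr)\rho_x(M)\equiv-\rho_x(M),
\]
so triangle additivity holds in the form $\rho_x(C_f)=\rho_x(N)-\rho_x(M)$. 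The same multiplicativity shows that the induced map is $\OH$-linear, since $\OH=\K(H\udgmod)$ acts through $[V]\mapsto\dim_qV$ modulo $\dim_qH$; this is well defined because projective $H$-modules have $\dim_q$ divisible by $\dim_qH$.

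\emph{Values on cells and the main difficulty.} The formula on cells is immediate: $\overline{q^dP_x}=q^dL_x$ with $e_yL_x=\delta_{xy}\kk$. I expect the main obstacle to be the first step, invariance under isomorphism in $\Dc$. There one must know that objects of $\Dc$ have cofibrant representatives in $\Ecat$, with stable isomorphism detected by Heller. The real content is that contractible modules reduce to projective $A_0\otimes H$-modules (Proposition~\ref{prop:contractible-cellular}), which is exactly why $\rho_x$ lands only in $\OH$ and not in $\mathbb Z[q^{\pm1}]$.
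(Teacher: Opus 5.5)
Your proposal is correct and is essentially the paper's argument. Both rest on two facts: reduction modulo $A_+$ is exact on $A$-split conflations, and a contractible object reduces to an $H$-module whose graded dimension is divisible by $\dim_q H$.

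The differences are in presentation only. The paper simply checks additivity on conflations in $\Ecat$ and vanishing on projective-injectives, and lets the stable-category structure of $\Ecat$ do the descent; for the vanishing it uses only that a contractible $M$ is a summand of $q^{-\ell}M\otimes H$. You instead verify the triangulated $K_0$ relations by hand, via Heller's theorem, standard cone triangles and the sign $\rho_x(M[1])=-\rho_x(M)$, and you invoke the structure theorem for contractible modules for the vanishing.
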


\begin{proof}
Thanks to
Lemmas~\ref{lem:finite-cofibrant-model} and \ref{lem:frobenius},
it is enough to check additivity of $\rho_x$ on admissible short exact sequences in $\Ecat$ and its vanishing on
projective-injective objects.  Every conflation in $\Ecat$ is
$A$-split, so $A_0\otimes_A(\mbox{-})$ is exact on it; hence
$\operatorname{dim}_q(e_x\bar M)$ is additive.

A projective-injective object of $\Ecat$ is precisely a finitely generated
contractible cofibrant module $M$ (Proposition \ref{prop:contractible-cellular}). 
Reduction modulo $A_+$ and then application of $e_x$ show that
$e_x\bar M$ is a graded direct summand of a shift of
$(e_x\bar M)\otimes H$, hence is a projective $H$-module.
Thus, $\rho_x$ vanishes on projective-injectives and descends to
$\K(\Dc)$.  
The $q$-equivariance is immediate, and \eqref{eq:rank-on-cells} follows from
\[
\bar{P_x}=A_0e_x,
\qquad
e_yA_0e_x=\delta_{xy}\kk.
\]
Note that this argument does not need any grading gaps.
\end{proof}

\begin{cor}
\label{cor:K0-split-lattice}
Without assuming a grading gap, the map
\[
 \pi:\bigoplus_x\OH u_x \longrightarrow\K(\Dc),\qquad
 u_x\longmapsto[P_x]
\]
is split injective, with left inverse $\rho$.  In particular,
\[
 \K(\Dc)\cong\left(\bigoplus_x\OH[P_x]\right)\oplus\ker\rho.
\]
\end{cor}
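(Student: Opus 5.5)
The plan is to check that $\pi$ is well defined and $\OH$-linear, and then that $\rho\circ\pi$ is the identity of $\bigoplus_x\OH u_x$. Split injectivity and the direct-sum decomposition are then formal.

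First, $\pi$ is well defined as an $\OH$-module map. The tensor action \eqref{eq:H-action-triangulated} makes $\K(\Dc)$ an $\OH$-module: $q^d$ acts by grading shift, and $\dim_q H$ acts by zero because $[Y\otimes H]=(\dim_qH)[Y]$ via a composition series of $H$, while $Y\otimes H$ is contractible. Hence sending $u_x\mapsto[P_x]$ and extending $\OH$-linearly gives a well-defined homomorphism. On the free generators we have $\pi(q^d u_x)=[q^dP_x]$.

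Next, compute $\rho\circ\pi$. Proposition~\ref{prop:rank-character} says $\rho=(\rho_y)_y$ is $\OH$-linear and satisfies $\rho_y(q^dP_x)=\delta_{xy}q^d$. Therefore
\[
\rho(\pi(u_x))=(\delta_{xy})_{y\in X},
\]
which is the standard basis vector of $\bigoplus_y\OH$. Identify the target of $\rho$ with $\bigoplus_x\OH u_x$ by matching the $x$-th coordinate with $u_x$. Since both $\rho$ and $\pi$ are $\OH$-linear and their composite fixes a basis of a free module, $\rho\circ\pi=\Id$.

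Finally, from a left inverse one gets the splitting in the usual way. The composite $e=\pi\rho$ is an idempotent endomorphism of $\K(\Dc)$ with image $\pi(\bigoplus_x\OH u_x)=\bigoplus_x\OH[P_x]$ and kernel $\ker\rho$. Writing $m=\pi\rho(m)+(m-\pi\rho(m))$ gives
\[
\K(\Dc)\cong\Bigl(\bigoplus_x\OH[P_x]\Bigr)\oplus\ker\rho.
\]
The same identity shows that the classes $[P_x]$ are $\OH$-linearly independent, so this summand is free. No gap hypothesis enters, because Proposition~\ref{prop:rank-character} uses none.

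The only substantive input is that $\rho$ descends to $\K(\Dc)$ and is $\OH$-linear, which is exactly Proposition~\ref{prop:rank-character}. I therefore expect no real obstacle beyond the bookkeeping of matching $\OH$-module structures on source and target.
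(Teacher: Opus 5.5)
Your proposal is correct and follows the paper's proof. The paper also argues that $\pi$ is well defined because the tensor action makes $\K(\Dc)$ an $\OH$-module, and that $\rho\pi=\Id$ by \eqref{eq:rank-on-cells}. You additionally spell out two steps the paper leaves implicit: why $\dim_qH$ acts by zero, and the idempotent $\pi\rho$ that produces the splitting with complement $\ker\rho$.
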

\begin{proof}
By \eqref{eq:H-action-triangulated}, $\K(\Dc)$ is an $\OH$-module so $\pi$ is well-defined. Then
\eqref{eq:rank-on-cells} gives $\rho\pi=\Id$.
\end{proof}

\subsection{The cell-basis theorem and its local extension}
\label{sec:K0-gap}
\begin{thm}\label{thm:K0}
Assume the gap condition 
$A_1=\dots = A_{\ell-1}=0$.
The classes of the standard cells form an
$\OH$-basis of the Grothendieck group:
\[
\K(\Dc)
\cong
\bigoplus_{x\in X}\OH[P_x].
\]
Equivalently, the homomorphism
\[
\pi:\bigoplus_{x\in X}\OH u_x\longrightarrow \K(\Dc),
\qquad
u_x\longmapsto[P_x],
\]
is an isomorphism, with inverse the rank character $\rho$.
\end{thm}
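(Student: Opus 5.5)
The plan is to combine the split-injectivity already available from Corollary~\ref{cor:K0-split-lattice} with a surjectivity statement supplied by the bounded heart of Theorem~\ref{thm:window-tstructure}. Since $\rho\pi=\Id$ holds with no gap hypothesis, $\pi$ is injective and $\K(\Dc)=\operatorname{im}\pi\oplus\ker\rho$. It therefore suffices to show that $\pi$ is surjective. Then $\ker\rho=0$, $\pi$ is an isomorphism, and $\rho$ is its two-sided inverse.

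For surjectivity I would invoke Proposition~\ref{prop:K0-heart-basis}. Under the open gap $A_d=0$ for $0<d<\ell$, the bounded $t$-structure has finite-length heart with simple objects $q^{-r}P_x$, $0\le r<\ell$. Its Grothendieck group is free on the simples, and dévissage (the alternating sum of $t$-cohomology objects) identifies it with $\K(\Dc)$. Hence the classes $[q^{-r}P_x]$ generate $\K(\Dc)$ over $\mathbb Z$. Each such class equals $q^{-r}\cdot[P_x]=\pi(q^{-r}u_x)$, because the $\OH$-action on $\K(\Dc)$ is induced by tensoring with $q^{-r}\kk$ and $P_x\otimes q^{-r}\kk\cong q^{-r}P_x$. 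Therefore every class lies in the image of $\pi$.

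As a consistency check, and an alternative route that avoids citing the splitting, I would compare ranks. By the last part of Proposition~\ref{prop:K0-heart-basis}, $\OH$ is free over $\mathbb Z$ with basis $1,q^{-1},\dots,q^{-(\ell-1)}$, because $\dim_qH$ is monic of degree $\ell$ with constant term $1$. Consequently $\pi$ sends the $\mathbb Z$-basis $\{q^{-r}u_x\}$ of $\bigoplus_x\OH u_x$ bijectively onto the $\mathbb Z$-basis $\{[q^{-r}P_x]\}$ of $\K(\Dc)$, so $\pi$ is a $\mathbb Z$-isomorphism directly. Its $\OH$-linearity is built into the definition, and $\rho$ is the inverse by \eqref{eq:rank-on-cells}.

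The only delicate point is the identification $\K(\Dc)\cong\K(\mathcal H^t)$. This requires the $t$-structure to be bounded on all of $\Dc$, not just on $\tria(\mathcal S)$, which is guaranteed by \eqref{eq:window-tria-equals-thick}. I would also confirm that $\K(\Dc)$ is the triangulated Grothendieck group, so that $\rho$ is well defined on it via Lemma~\ref{lem:frobenius}. Both facts are already established earlier in the paper, so I expect no real obstacle beyond bookkeeping of the $q$-shifts.
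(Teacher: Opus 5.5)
Your proof is correct. It differs from the paper's only in how you get surjectivity of $\pi$; injectivity via $\rho\pi=\Id$ is exactly as in the paper.

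The paper does not pass through the heart. It takes the consequence $\Tcat=\Dc$ of Theorem~\ref{thm:window-tstructure} (or of Theorem~\ref{thm:gap-local} when also $A_\ell=0$). So every compact object is isomorphic to some $P\in\Pun$, and additivity along a cell filtration gives $[P]=\sum_i q^{d_i}[P_{x_i}]\in\operatorname{im}\pi$.

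You instead use the d\'evissage $\K(\Dc)\cong\K(\mathcal H^t)$ of Proposition~\ref{prop:K0-heart-basis}. What each buys:
\begin{itemize}
\item The paper's version needs only derived cellularity, not a finite-length heart. It therefore goes through unchanged if the elementary filtration proof of Theorem~\ref{thm:gap-local} replaces the simple-minded machinery. It also makes visible the Thomason-type equivalence of Remark~\ref{rem:thomason} between surjectivity of $\pi$ and $\Tcat=\Dc$.
\item Your version gives the sharper integral statement: the heart supplies $\mathbb Z$-independence of $\{[q^{-r}P_x]\}_{0\le r<\ell}$. Combined with the $\mathbb Z$-basis $1,q^{-1},\dots,q^{-(\ell-1)}$ of $\OH$, your rank comparison proves bijectivity of $\pi$ without $\rho$. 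Then $\rho=\pi^{-1}$ follows automatically from $\rho\pi=\Id$.
\end{itemize}

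For surjectivity alone, even the heart is more than you need. The equality $\tria(\mathcal S)=\Dc$ already forces the window classes $[q^{-r}P_x]$ to generate $\K(\Dc)$ over $\mathbb Z$. This is also the real input behind the paper's route, since $\tria(\mathcal S)\subseteq\Tcat$.
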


\begin{proof}
By Proposition~\ref{prop:K0-heart-basis}, $\pi$ is well defined.
Theorem~\ref{thm:window-tstructure} (or the simpler Theorem~\ref{thm:gap-local} when also $A_\ell=0$) applies, and gives $\Dc=\Tcat$.  Thus every object of $\Dc$ is isomorphic to the image of
some $P\in\Pun$.  If
\[
0=P_0\subset P_1\subset\cdots\subset P_m=P
\]
is a cell filtration with
$P_i/P_{i-1}\cong q^{d_i}P_{x_i}$,
then the corresponding distinguished triangles give
\[
[P]=\sum_{i=1}^m q^{d_i}[P_{x_i}].
\]
Hence $\pi$ is surjective.

On the other hand, Proposition~\ref{prop:rank-character} gives
$\rho\circ\pi=\Id$.
Therefore, $\pi$ is also injective, and hence it is an isomorphism.
\end{proof}

\begin{cor}\label{cor:K0-coordinate}
Under the gap condition 
$A_1=\dots = A_{\ell-1}=0$, if $M$ is
a finitely generated cofibrant $B$-module, then
\[
[M]
=
\sum_{x\in X}
\rho_x(M)[P_x]
\qquad\text{in }\K(\Dc).
\]
Thus the class of $M$ is determined by the graded $H$-modules
$e_x(M/A_+M)$. \hfill $\square$
\end{cor}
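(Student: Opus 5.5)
The claim to prove is Corollary~\ref{cor:K0-coordinate}: under the open gap, $[M]=\sum_x\rho_x(M)[P_x]$ for every finitely generated cofibrant $M$. I would read it off from Theorem~\ref{thm:K0} together with the rank character of Proposition~\ref{prop:rank-character}.

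First, Theorem~\ref{thm:K0} says that $\pi:\bigoplus_x\OH u_x\to\K(\Dc)$, $u_x\mapsto[P_x]$, is an isomorphism with inverse $\rho=(\rho_x)_x$. So for any class $c\in\K(\Dc)$ we have
\[
c=\pi(\rho(c))=\sum_x\rho_x(c)[P_x],
\]
where $\rho_x(c)\in\OH$ acts on $[P_x]$ through the $\OH$-module structure coming from \eqref{eq:H-action-triangulated}.

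Second, I take $c=[M]$, the image of $M$ in $\Dc$. Since $M$ is finitely generated cofibrant, it lies in $\Ecat$, so it defines an object of $\Dc$ by Lemma~\ref{lem:frobenius}. By the construction in Proposition~\ref{prop:rank-character}, $\rho_x([M])$ is the class of $\dim_q(e_x\bar M)$ modulo $\dim_qH$, where $\bar M=M/A_+M$. Substituting gives the displayed formula.

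The one point needing care is that $\rho_x$ is well defined on the object $M$ itself, not only on finite-cell representatives. This is exactly what Proposition~\ref{prop:rank-character} established: $\rho_x$ is additive on conflations of $\Ecat$ and vanishes on projective-injective objects, so it factors through $\K$ of the stable category $\E(A,H)\simeq\Dc$.

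The final sentence of the corollary follows at once. The class $\rho_x(M)$ depends only on the graded dimension of the graded $H$-module $e_x\bar M$, which in turn is determined by that module. I expect no real obstacle; the only thing to check is that the $\OH$-coefficients are interpreted via the tensor action, which is compatible with $q$-shifts by the $q$-equivariance noted in Proposition~\ref{prop:rank-character}.
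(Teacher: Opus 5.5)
Your proposal is correct and is essentially the paper's own argument. The corollary carries no separate proof because it is just the identity $[M]=\pi(\rho([M]))=\sum_x\rho_x(M)[P_x]$, which follows from Theorem~\ref{thm:K0} ($\pi$ is an isomorphism with inverse $\rho$), with $\rho_x([M])$ read off from Proposition~\ref{prop:rank-character}.
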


\begin{rem}[The $p$-DG case]
For
\[
H=\kk[\partial]/(\partial^p),
\qquad
\deg(\partial)=2,
\]
one has
\[
\operatorname{dim}_qH
=
1+q^2+\cdots+q^{2p-2},
\]
and, already under $A_i=0$ for $0<i<2p-2$,
\[
\K(\Dc)
\cong
\bigoplus_{x\in X}
\frac{\mathbb Z[q,q^{-1}]}
{(1+q^2+\cdots+q^{2p-2})}[P_x].
\]
For $p=2$ this requires only $A_1=0$, and the coefficient ring is
$\mathbb O_2\cong\mathbb Z[i]$.  It is not necessary that every
finitely generated graded-projective module be finite cell: it is
enough that every compact object have a finite-cell representative.

In the next subsection, we will exhibit examples in the $p$-DG case that, when the gap condition fails, Theorem \ref{thm:K0} and Corollary \ref{cor:K0-coordinate} do not necessarily hold.
\end{rem}

\begin{rem}\label{rem:thomason}
Condition (iii) of Corollary~\ref{cor:reformulation} can be given a
$\mathrm{K}$-theoretic form, which we record for completeness only.  By Lemma~\ref{lem:finite-cofibrant-model}, the triangulated subcategory
$\Tcat\subseteq\Dc$ is \emph{dense}, in the sense that every object of $\Dc$ is
a direct summand of an object of $\Tcat$.  Thomason's classification of dense
triangulated subcategories \cite{Thomason} then identifies $\Tcat$ with the
subgroup of $\K(\Dc)$ that it generates:
\[
\Tcat=\bigl\{\,Y\in\Dc \;:\; [Y]\in\langle\, [q^dP_x] \;:\; x\in X,\ d\in\mathbb Z \,\rangle \,\bigr\}.
\]
Hence $\Tcat=\Dc$ if and only if the classes of the standard cells generate
$\K(\Dc)$.  This equivalence itself requires no grading gap.
With gap conditions,
Theorem~\ref{thm:window-tstructure} and \ref{thm:gap-local} prove that $\Tcat=\Dc$, so that Theorem~\ref{thm:K0} follows in either case.  Without a gap,
Proposition~\ref{prop:gap-counterexamples} below supplies nonzero classes in
$\Ke\rho$, and Proposition~\ref{prop:K0-p-torsion} below exhibits
such a class of exact additive order $p$. It then follows that, for a general $A$, $H$ as in Section \ref{sec:prelim}, $\Tcat$ may not be equal to $\Dc$.
\end{rem}

\subsection{Example: a \texorpdfstring{$p$}{p}-DG algebra}
\label{sec:gap-examples}
Let $\operatorname{char}\kk=p>0$, let
$H=\kk[\partial]/(\partial^p)$ with $\mathrm{deg}(\partial)=2$.  Fix an integer $r\ge1$ such that $2r\le p$ and put
\begin{equation}\label{eq:gap-Ar}
 A=\kk[x,y]/(x^2,y^2),\qquad \mathrm{deg}(x)=\mathrm{deg}(y)=r,\qquad
 \partial_{A}=0.
\end{equation}
Commutativity here is ordinary commutativity, not supercommutativity.
The algebra is four-dimensional, $A_0=\kk$, and its only other
nonzero homogeneous pieces occur in degrees $r$ and $2r$.
There is only one standard cell $P=A$.

Define
\begin{equation}\label{eq:gap-Tr}
 T_r=\bigoplus_{j=0}^{r-1}Az_j,\qquad \mathrm{deg}(z_j)=2j,\qquad
 \partial z_j=z_{j+1}\ (j<r-1),\quad
 \partial z_{r-1}=xy\,z_0.
\end{equation}
Write, for simplicity $\partial=\partial_{T_r}$
\begin{equation}\label{eq:gap-cycle-powers}
 \partial^{r}=xy\,I_r,\qquad
 \partial^{2r}=0.
\end{equation}
Thus $T_r$ is a $p$-DG module over $A$.  Its compactness and cofibrancy will follow
from an explicit retract similar to that of Example~\ref{ex:badU}, not from its underlying graded freeness.

\paragraph{A cell module.}
Again, fix $r\geq 1$. Set
\[
 F_r=\bigoplus_{a=1}^4\bigoplus_{j=0}^{r-1}Af_{a,j},\qquad
 \mathrm{deg}(f_{a,j})=s_a+2j,\qquad(s_1,s_2,s_3,s_4)=(r,0,0,-r).
\]
Define its $p$-differential $\partial_F$ by setting $\partial_F f_{a,j}=f_{a,j+1}$ for $j<r-1$, and, for $j=r-1$,
\begin{equation}\label{eq:gap-endpoints}
\begin{aligned}
 \partial_F f_{1,r-1}&=0,&
 \partial_F f_{2,r-1}&=xf_{1,0},\\
 \partial_F f_{3,r-1}&=yf_{1,0},&
 \partial_F f_{4,r-1}&=f_{1,0}-yf_{2,0}+xf_{3,0}.
\end{aligned}
\end{equation}
At the end point $j=r-1$, $\partial_F$ has its matrix given by
\begin{equation}\label{eq:gap-Delta}
 \Delta=\begin{pmatrix}
 0&x&y&1\\
 0&0&0&-y\\
 0&0&0&x\\
 0&0&0&0
 \end{pmatrix},\qquad \Delta^2=0.
\end{equation}
This is the matrix adapted from Boocher--DeVries~\cite[Example 3.3]{BoocherDeVries}.  Here, each of the original generators of \cite[Example 3.3]{BoocherDeVries} has been
replaced by an $r$-term chain.  The $r$th power of $\partial_{F}$ acts
on each fixed chain position by $\Delta$; hence
\[
 \partial_{F}^{r}=\operatorname{diag}(\Delta,\ldots,\Delta),\qquad
 \partial_{F}^{2r}=0.
\]
This verifies $\partial_{F_r}^p=0$.  Order the generators by increasing
$a$ and, for fixed $a$, by decreasing $j$:
\begin{equation}\label{eq:gap-cell-order}
 f_{1,r-1},\ldots,f_{1,0},\quad
 f_{2,r-1},\ldots,f_{2,0},\quad
 f_{3,r-1},\ldots,f_{3,0},\quad
 f_{4,r-1},\ldots,f_{4,0}.
\end{equation}
Applying $\partial_F$ to any generator in this order takes it into the $A$-span of preceding generators, so their
successive spans give a cell filtration of $F_r$.

\paragraph{A summand.}
Define homogeneous degree-zero $A$-linear maps
\begin{equation}\label{eq:gap-retract}
\begin{gathered}
 \iota:T_r\longrightarrow F_r,\qquad \iota(z_j)=f_{2,j}-xf_{4,j},\\
 \pi:F_r\longrightarrow T_r,\qquad
 \pi(f_{1,j})=yz_j,\quad \pi(f_{2,j})=z_j,\quad
 \pi(f_{3,j})=\pi(f_{4,j})=0.
\end{gathered}
\end{equation}
Then $\pi \iota=\Id$.  The maps commute with $p$-differentials along the chains.
At $j=r-1$ the calculation for $\iota$ is
\[
\begin{aligned}
 \partial \iota(z_{r-1})
 &=xf_{1,0}-x(f_{1,0}-yf_{2,0}+xf_{3,0})\\
 &=xyf_{2,0}=xy(f_{2,0}-xf_{4,0})=\iota\partial(z_{r-1}),
\end{aligned}
\]
using $x^2=0$.  The checks for $\pi$ use $y^2=0$ and
$\pi\partial(f_{4,r-1})=yz_0-yz_0=0$.
Consequently, $\iota\pi$ is a strict $p$-DG idempotent on $F_r$ with image
$T_r$.  Both compactness and cofibrancy pass to retracts, so $T_r$ has
both properties.

\paragraph{A decomposition.}
At each chain position, use the homogeneous basis
\[
\begin{aligned}
 u_j&=f_{2,j}-xf_{4,j},&v_j&=f_{3,j}-yf_{4,j},\\
 w_j&=f_{1,j}-yf_{2,j}+xf_{3,j},&t_j&=f_{4,j}.
\end{aligned}
\]
The inverse change of basis is
$f_{1,j}=w_j+yu_j-xv_j$, $f_{2,j}=u_j+xt_j$,
$f_{3,j}=v_j+yt_j$, and $f_{4,j}=t_j$.
The differential moves each family along its chain and has endpoints
\[
 \partial u_{r-1}=xyu_0,\quad
 \partial v_{r-1}=-xyv_0,\quad
 \partial w_{r-1}=0,\quad \partial t_{r-1}=w_0.
\]
Thus, we obtain a decomposition of $p$-DG modules
\begin{equation}\label{eq:gap-full-split}
 F_r\cong T_r^+\oplus T_r^-\oplus B_r,
\end{equation}
where $T_r^+=T_r$, $T_r^-$ has endpoint $-xy$, and $B_r$ is the
$2r$-term chain starting in degree $-r$ with unit arrows
\[
 t_0\longmapsto\cdots\longmapsto t_{r-1}
 \longmapsto w_0\longmapsto\cdots\longmapsto w_{r-1}\longmapsto0.
\]
Writing $c_r(q)=1+q^2+\cdots+q^{2(r-1)}$, the cell filtrations give in $\K(A,\partial)$ the relation
\begin{equation}\label{eq:gap-sum-relation}
 [F_r]=(q^r+2+q^{-r})c_r(q)[A],\qquad
 [B_r]=(q^r+q^{-r})c_r(q)[A],
\end{equation}
and therefore
\begin{equation}\label{eq:gap-two-traces}
 [T_r^+]+[T_r^-]=2c_r(q)[A].
\end{equation}
When $2r<p$, the chain $B_r$ is not contractible; no such cancellation
has been used.  When $r=1$, $p=2$, it is contractible.

\subsection{Differential traces}
\label{sec:gap-traces}
All Grothendieck groups in this discussion are those of the compact
derived categories.  We need an invariant of every compact object, not
only of a prescribed finite-cell representative.

\begin{lem}\label{lem:gap-higher-traces}
Let $R\ge0$ be a locally finite ordinary commutative graded algebra with
$R_0=\kk$ and zero $p$-differential.  For each $m\ge1$, ordinary matrix
trace on finite graded-free cofibrant representatives defines a homomorphism
\begin{equation}\label{eq:gap-trace-map}
 \tau_m:\K(\mathcal D^c(R,\partial))\longrightarrow(R_{2m},+),\qquad
 \tau_m(M)=\operatorname{Tr}_R(\partial_M^m).
\end{equation}
It is invariant under internal grading shifts and vanishes on every
finite-cell object.
\end{lem}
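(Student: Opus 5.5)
We will define $\tau_m$ on the Frobenius model of Lemma~\ref{lem:frobenius} and check that it respects every relation defining $\K(\Dc)$. Every object of $\mathcal D^c(R,\partial)$ is represented by a finitely generated cofibrant module $M$. Since $R_0=\kk$, Lemma~\ref{lem:gap-lifting} makes $M$ graded free of finite rank. After choosing a homogeneous basis, $\partial_M$ becomes a matrix $D$ over $R$ with homogeneous entries, where entry $(i,j)$ has degree $\deg g_j+2-\deg g_i$. Because $R$ is commutative, $\operatorname{Tr}(D^m)$ is independent of the basis, and its diagonal entries have degree exactly $2m$. So $\tau_m(M)\in R_{2m}$ is well defined on isomorphism classes of modules. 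Shifting $q^n$ moves all generators by the same amount and does not change $D$, which gives shift invariance.

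Additivity and vanishing come next. A conflation $0\to M'\to M\to M''\to0$ in $\Ecat$ splits over $R$, so in an adapted basis $D$ is block upper triangular with diagonal blocks $D'$ and $D''$. Then $D^m$ is block upper triangular with diagonal blocks $D'^m$ and $D''^m$, so $\operatorname{Tr}(D^m)=\operatorname{Tr}(D'^m)+\operatorname{Tr}(D''^m)$. For a finite-cell object the same argument applied to a cell filtration reduces the trace to a sum over cells $q^dR$. On each such cell the differential is $\partial_R=0$: the generator is a degree-$d$ element whose differential lies in $(q^dR)_{d+2}$ and is a multiple of the generator by an element of $R_2$, hence of the form $g\cdot 1$. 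Since $\partial_R=0$ and $H$ acts trivially on $R_0$, this coefficient is $0$. Hence all diagonal blocks vanish and $\tau_m=0$ on finite-cell objects.

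It remains to show that $\tau_m$ kills projective-injective objects of $\Ecat$; by Lemma~\ref{lem:frobenius} and the standard presentation of $\K$ of a stable category, that finishes the descent. By Proposition~\ref{prop:contractible-cellular}, such an object is a sum of $q^d(R\otimes H)$, which lies in $\Pun$, so its trace vanishes by the previous step. Equivalently, stably isomorphic modules differ by such summands through Heller's theorem, as in Proposition~\ref{prop:stably-cellular-char}, and these contribute nothing. The relations in $\K(\Dc)$ are generated by triangles, and these come from conflations in $\Ecat$ together with isomorphisms in the stable category, so $\tau_m$ descends to a homomorphism into $(R_{2m},+)$.

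The main obstacle is making that last descent rigorous, because not every object of $\Dc$ is a priori in $\Pun$. I would use Lemma~\ref{lem:finite-cofibrant-model} and Lemma~\ref{lem:frobenius} to identify $\K(\Dc)$ with $\K$ of the stable Frobenius category $\E(A,H)$. For a Frobenius category, that group is the quotient of $\K$ of the exact category $\Ecat$ by the classes of projective-injectives. A small point to check is that direct sums and summands behave correctly: the trace of a block-diagonal matrix is the sum of the block traces, so additivity on split sums is immediate.
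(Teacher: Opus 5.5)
Your proof is correct, but the descent to $\K$ follows a different route from the paper's.

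\textbf{The paper's route.} The paper works directly with homotopy equivalences. It writes a null-homotopic endomorphism as $f=\sum_{j}\partial_M^j h\,\partial_M^{p-1-j}$ and uses cyclicity of the $R$-valued trace to get $\operatorname{Tr}_R(\partial_M^m f)=p\operatorname{Tr}_R(h\partial_M^{p-1+m})=0$. From this it deduces independence of the cofibrant model. It then checks triangle additivity by hand: $\tau_m(M[1])=(p-1)\tau_m(M)=-\tau_m(M)$ because $\partial_V$ is nilpotent on $V=H/\kk\partial^{p-1}$, and standard cones are handled by block-triangularity.

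\textbf{Your route.} You stay inside the exact category $\Ecat$. You prove additivity on $R$-split conflations, and vanishing on projective-injectives, which lie in $\Pun$ by Proposition~\ref{prop:contractible-cellular}. You then invoke the general presentation $\K(\E(R,H))\cong\K(\Ecat)/\langle[P]\rangle$ for a Frobenius category. That presentation is correct. Its proof consists of two inputs: Heller's theorem gives isomorphism invariance, and the conflations $0\to M\to q^{-\ell}(M\otimes H)\to M[1]\to0$ and $0\to N\to C_f\to M[1]\to0$ handle standard triangles.

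\textbf{What each approach buys.} Your argument never needs the explicit form of null-homotopies or the cancellation $p=0$ in a cyclic sum. The role of characteristic $p$ is absorbed into the Frobenius structure: the only input is that the projective-injectives $q^d(R\otimes H)$ carry a strictly triangular differential. It also shows more generally that any invariant additive on $R$-split sequences and vanishing on $q^d(R\otimes H)$ descends to compact $\K$. The paper's argument is more self-contained: it uses neither the classification of contractibles nor Heller's theorem, and it exhibits the suspension sign explicitly.

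\textbf{Two small points to tighten.}
\begin{itemize}
\item State explicitly that $\partial_M$ is $R$-linear because $\partial_R=0$ and $\partial$ is primitive. This is what lets you write $\partial_M$ as a matrix over $R$ at all.
\item On a cell, the induced differential vanishes simply because the subquotient is $B$-isomorphic to $q^dR$ with zero differential. The detour through a coefficient in $R_2$ is unnecessary.
\end{itemize}
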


\begin{proof}
By Lemma~\ref{lem:finite-cofibrant-model}, every compact object has a
finitely generated graded-projective cofibrant representative.  Such a
module over $R$ is graded-free: lift a homogeneous basis of its reduction
modulo $R_+=\bigoplus_{n>0}R_n$, split the resulting graded-free
surjection, and apply
graded Nakayama to its kernel.  Thus the indicated models exist.
Since $\partial_R=0$, the module differential is $R$-linear.
Commutativity of $R$ gives a basis-independent ordinary $R$-valued trace;
it is not a supertrace or the $\kk$-linear trace.

A null-homotopic degree-zero endomorphism of $p$-DG modules has the
form~\cite[Section~5.4]{QYHopf}
\[
 f=\sum_{j=0}^{p-1}\partial_M^j h\partial_M^{p-1-j},\qquad \mathrm{deg}(h)=2-2p.
\]
For $m\ge 0$, cyclicity yields
\begin{equation}\label{eq:gap-null-trace}
 \operatorname{Tr}_R(\partial_M^m f)
 =\sum_{j=0}^{p-1}\operatorname{Tr}_R(\partial_M^{m+j}h\partial_M^{p-1-j})
 =\sum_{j=0}^{p-1}\operatorname{Tr}_R(h\partial_M^{p-1+m})= p \operatorname{Tr}_R(h\partial_M^{p-1+m})=0.
\end{equation}
thanks to $\partial_M^p=0$ and $p=0$.  If $f:M\to N$ and $g:N\to M$ are homotopy inverses,
apply this to $gf-\Id$ and $fg-\Id$ and use cyclicity for rectangular
matrices to obtain
\[
 \tau_m(M)=\operatorname{Tr}_R(\partial_M^m gf)
 =\operatorname{Tr}_R(f\partial_M^m g)
 =\operatorname{Tr}_R(\partial_N^m fg)=\tau_m(N).
\]
Derived isomorphisms between cofibrants are homotopy equivalences, so
this proves independence of the finite cofibrant model.

A suspension is represented by
$q^{2-2p}(M\otimes V)$, where
$V=H/(\kk\partial^{p-1})$ has dimension $p-1$.  Its differential is
$\partial_M\otimes1+1\otimes \partial_V$.  In the trace of its $m$th power, every term
involving a positive power of $\partial_V$ vanishes: $\partial_V$ is nilpotent over the
field.  Hence
\[
 \tau_m(M[1])=(p-1)\tau_m(M)=-\tau_m(M)
\]
in the additive group of $R_{2m}$.  A standard cone of a map $M\to N$
between finite cofibrant models is again finite and cofibrant, and fits
into a graded-$R$-split sequence
$0\to N\to C\to M[1]\to0$.
The differential and its powers are block triangular in such a
splitting, so $\tau_m(C)=\tau_m(N)-\tau_m(M)$.
This is triangle additivity.  Finally, internal shifts do not change the
differential matrix, and a basis adapted to a cell filtration makes it,
and all its positive powers, strictly triangular.
\end{proof}

\begin{prop}\label{prop:gap-counterexamples}
For $2r\le p$, the module $T_r$ in \eqref{eq:gap-Tr} is a direct summand of
a finite-cell module over $A$, but has no
finite-cell representative in the derived category.  Moreover,
\begin{equation}\label{eq:gap-not-generated}
 [T_r]\notin\mathbb O_p[A],\qquad
 \mathbb O_p=\frac{\mathbb Z[q,q^{-1}]}{(1+q^2+\cdots+q^{2p-2})}.
\end{equation}
In particular, its essential finite-cell image is not idempotent complete.
\end{prop}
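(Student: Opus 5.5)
The proof will combine the explicit retract of the previous subsection with the higher trace of Lemma~\ref{lem:gap-higher-traces}, taken at $m=r$. The summand statement is already in hand: the maps $\iota,\pi$ of \eqref{eq:gap-retract} exhibit $T_r$ as a strict $p$-DG direct summand of the finite-cell module $F_r$, whose cell filtration comes from the ordering \eqref{eq:gap-cell-order}. So the real content is the non-representability claim and \eqref{eq:gap-not-generated}.

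\textbf{Step 1: the trace of $T_r$.} Apply $\tau_r:\K(\mathcal D^c(A,\partial))\to A_{2r}$. The hypotheses of the lemma hold: $A$ is commutative, locally finite, has $A_0=\kk$, and $\partial_A=0$. The condition $2r\le p$ is what keeps $\partial^r$ meaningful below the nilpotency bound, since $\partial_{T_r}^{2r}=0$ and $\partial^p=0$ must be consistent. By \eqref{eq:gap-cycle-powers}, $\partial_{T_r}^r=xy\,I_r$, so
\[
\tau_r(T_r)=\operatorname{Tr}_A(xy\,I_r)=r\,xy\in A_{2r}=\kk\,xy .
\]
Because $1\le r\le p/2<p$, the integer $r$ is invertible in $\kk$, and hence $\tau_r([T_r])\neq0$.

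\textbf{Step 2: vanishing on the cell lattice.} The lemma states that $\tau_r$ vanishes on every finite-cell object and is invariant under internal shifts $q^d$. The $\mathbb O_p$-module structure on $\K$ is generated by $q^{\pm1}$, because $\mathbb O_p$ is spanned by the classes of the $q^d\kk$. It follows that $\tau_r$ kills every element $\sum c_d q^d[A]$. In other words, $\tau_r$ vanishes on $\mathbb O_p[A]$, which is the subgroup generated by the classes of cells. This gives $[T_r]\notin\mathbb O_p[A]$.

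\textbf{Step 3: conclusions.} Suppose $T_r$ were derived-isomorphic to some $P\in\Pun$. Then $[T_r]=[P]$ would lie in $\mathbb O_p[A]$ by the triangle computation used in the proof of Theorem~\ref{thm:K0}, which contradicts Step 2. So $T_r\notin\Tcat$. On the other hand, $T_r$ is a derived summand of $F_r\in\Tcat$. Hence $\Tcat$ is not closed under summands in $\Dc$, and by Corollary~\ref{cor:reformulation} the finite-cell image is not idempotent complete.

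The main obstacle is the well-definedness of $\tau_r$ on $\K$ of the compact category, but that is exactly the content of Lemma~\ref{lem:gap-higher-traces}, which we are assuming. What remains to check carefully is that $\tau_r$ is additive and shift-invariant. Together these force it to vanish on all $\mathbb Z[q^{\pm1}]$-combinations of $[A]$. A sign factor $-1$ appears under the triangulated shift $[1]$, but this does not affect the vanishing.
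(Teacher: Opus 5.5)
Your proposal is correct and matches the paper's proof. The retract \eqref{eq:gap-retract} gives the summand. The higher trace $\tau_r$ of Lemma~\ref{lem:gap-higher-traces} satisfies $\tau_r(T_r)=rxy\neq0$ because $1\le r<p$, and it vanishes on every finite-cell object and on every $[q^nA]$, which gives both the non-representability and $[T_r]\notin\mathbb O_p[A]$. The only difference is cosmetic: in Step~3 the paper uses $\tau_r(G)=0$ for $G\in\Pun$ directly, instead of passing through the triangle computation placing $[P]$ in $\mathbb O_p[A]$.
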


\begin{proof}
Compactness and cofibrancy follow from \eqref{eq:gap-retract}.
Since $1\le r<p$, equations \eqref{eq:gap-cycle-powers} and
\eqref{eq:gap-trace-map} give
\[
 \tau_r(T_r)= \operatorname{Tr}_{A}(\partial^{r})=rxy\ne0=rxy\ne0,\qquad
 \tau_r(G)=0\quad(G\in\Pun),\qquad
 \tau_r(q^nA)=0\quad(n\in\mathbb Z).
\]
Thus $T_r$ cannot be derived-isomorphic to a finite-cell object, and its
class cannot lie in the span of $[A]$. 
\end{proof}

Reduction to $A_0=\kk$ sends $T_r$ to the $r$-term chain beginning
in degree zero.  Consequently the class
\begin{equation}\label{eq:gap-extra-class}
 \alpha_r=[T_r]-c_r(q)[A]
\end{equation}
is in the kernel of reduction on $\K$, but
$\tau_r(\alpha_r)=rxy\ne0$. 

\begin{example}[Special case: $p=2$]
\label{ex:gap-rankone}
For $r=1$ the construction works for every prime $p$.  Write $T=T_1^+$,
so $T=Az$ with $\mathrm{deg}(z)=0$ and $\partial z=xyz$.
It cannot itself be finite cell: its underlying $A$-module is of rank one, so an
$A$-split cell filtration would consist of one shifted copy of $(A,0)$,
whereas $\partial_T\ne0$.

% The failure of the projected-filtration argument is explicit.
% At $r=1$, \eqref{eq:gap-retract} sends the first cell $Af_{1,0}$ to
% $yT\cong q(A/(y))$ as a graded $A$-module, which is not projective.  The next projected generator
% is $z$, but $\partial z=xyz\ne0$.  Discarding the first image because it
% vanishes modulo $A_+ T$ loses a required differential term.
% This is the obstruction to the reasoning in~\cite[Lemma~2.14]{EQ1};
% Proposition~\ref{prop:gap-counterexamples} also rules out a replacement
% filtration up to derived isomorphism.

If $p=2$, then $T_1^+=T_1^-=T$ and
\eqref{eq:gap-two-traces} gives $2[T]=2[A]$.  The class
$\alpha_1=[T]-[A]$ therefore satisfies
\[
 2\alpha_1=0,\qquad \tau_1(\alpha_1)=xy\ne0.
\]
It has exact additive order $2$.  Since
$\mathbb O_2=\mathbb Z[q,q^{-1}]/(1+q^2)\cong \mathbb{Z}[i]$ is torsion-free over
$\mathbb Z$, compact $K_0$ in this example is not a free
$\mathbb O_2$-module.  Proposition~\ref{prop:K0-p-torsion} gives the
integral $\K$ calculation for every prime and proves
$p\alpha_1=0$ and $\alpha_1^2=0$.
\end{example}

\begin{rem}[Half-integer grading and Schn\"urer's Theorem]
\label{rem:gap-half}
In characteristic $2$, divide all degrees of Example~\ref{ex:gap-rankone}
by two.  The same algebra and module then have
\[
\mathrm{deg}(x)=\mathrm{deg}(y)=\frac{1}{2},\qquad \mathrm{deg}(d)=1,\qquad d(z)=xyz.
\]
This is the half-integer-graded differential category with $d^2=0$;
the Leibniz and homotopy formulas are signless in characteristic $2$.
Reindexing changes none of the retraction or higher-trace calculations.
If $s$ raises degree by $\tfrac12$, the ground-field coefficient ring is
$\mathbb Z[s,s^{-1}]/(1+s^2)$.  Thus the half-graded extension of the
cell-basis assertion fails as well.  This does not contradict
Schn\"urer~\cite[Theorems~1 and~2]{SchPos}, who works with an integer
grading and a differential of degree one.  The intermediate positive
degree $\tfrac12$ is absent from that setting.
\end{rem}

\begin{rem}
\label{rem:gap-not-sharp}
Taking $(r,p)=(3,7)$ gives a rank-three retract of a twelve-cell module
with $A_1=A_2=0$.  Taking $(r,p)=(4,11)$ gives an evenly graded
counterexample, with $A$ concentrated in degrees $0,4,8$.
For any fixed $N$, choose $r>N$ and a prime $p>2r$; then
$A_1=\cdots=A_N=0$, but the conclusion still fails.
Accordingly no fixed cutoff independent of $p$ suffices for all primes.
For this family $\ell=2p-2$ and the first positive degree is $r<\ell$;
it never satisfies \eqref{eq:gap-local}.
These examples motivate a bound tied to the support of $H$, but do not
establish the optimality of the gap conditions.  A different hypothesis may also imply cellularity.
In fact, Corollary~\ref{cor:gap-pdg-weaker} removes the $A_\ell=0$ requirement from the
$p$-DG bound even in $\Ecat$.
\end{rem}

\subsection{Some torsion classes}
\label{sec:K0-torsion}
The higher traces of Section~\ref{sec:gap-traces} detect classes
outside the cell lattice.  In the rank-one example they detect
actual additive torsion, not merely an additional torsion-free
summand.  We give the calculation for every prime.

\begin{prop}\label{prop:K0-p-torsion}
Let $\operatorname{char}\kk=p$ and
\begin{equation}\label{eq:K0-torsion-algebra}
 A=\kk[x,y]/(x^2,y^2),\qquad \mathrm{deg}(x)=\mathrm{deg}(y)=1,\qquad\partial_A=0.
\end{equation}
Let $T_\pm=Az_\pm$, with $\mathrm{deg}(z_\pm)=0$ and
$\partial z_\pm=\pm xy z_\pm$.  The class
\begin{equation}\label{eq:K0-torsion-class}
 \alpha=[T_+]-[A]\in\K\bigl(\mathcal D^c(A,\partial)\bigr)
\end{equation}
satisfies
\begin{equation}\label{eq:K0-torsion-properties}
 \alpha\ne0,\qquad p\alpha=0,\qquad\rho(\alpha)=0.
\end{equation}
It has exact additive order $p$.  The derived tensor product makes
this compact Grothendieck group a ring with unit $[A]$, and
$\alpha^2=0$.  Moreover,
\[
 [T_+]\notin\mathbb O_p[A],
 \qquad
 \K\bigl(\mathcal D^c(A,\partial)\bigr)
 \text{ is not a free }\mathbb O_p\text{-module}.
\]
\end{prop}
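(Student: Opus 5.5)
The plan is to treat all the rank-one modules $T_c=Az_c$, with $\partial z_c=c\,xy\,z_c$ for $c\in\mathbb F_p$, at once and to compute in the ring $\K(\mathcal D^c(A,\partial))$, using the element $t:=[T_1]=[T_+]$. Since $\deg x=\deg y=1$, this is the case $r=1$ of Section~\ref{sec:gap-examples}. The condition $2r\le p$ holds for every prime, so the retract \eqref{eq:gap-retract} and the decomposition \eqref{eq:gap-full-split} are available. In particular $T_\pm$ are compact and cofibrant, and \eqref{eq:gap-two-traces} with $c_1(q)=1$ gives
\[
 [T_+]+[T_-]=2[A].
\]

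\textbf{Step 1: the ring structure.} Because $A$ is commutative and $\partial_A=0$, the tensor product $M\otimes_A N$ of $p$-DG modules, with differential $\partial_M\otimes1+1\otimes\partial_N$, is again a $p$-DG module. Here $\partial$ is primitive, so this is just the Leibniz rule. I would check the following properties on finitely generated graded-free cofibrant models.
\begin{enumerate}
\item[(a)] The tensor product preserves $A$-split short exact sequences in each variable, and hence standard cones.
\item[(b)] It sends a contractible module $M\otimes H$ to the contractible module $(M\otimes_A N)\otimes H$.
\item[(c)] It sends cells to cells, since $q^aA\otimes_A q^bA\cong q^{a+b}A$, and so preserves $\Pun$.
\item[(d)] It is compatible with the $\mathbb O_p$-action.
\end{enumerate}
By Lemmas~\ref{lem:finite-cofibrant-model} and~\ref{lem:frobenius}, these properties give a well-defined commutative biadditive product on $\K$ with unit $[A]$. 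I expect this to be the main technical obstacle. What I would try first is to define the product on the Frobenius category $\Ecat$, where conflations are $A$-split and projective-injectives are the contractibles of Proposition~\ref{prop:contractible-cellular}, and then pass to the stable category.

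\textbf{Step 2: algebra in the ring.} A direct computation gives $\partial(z_a\otimes z_b)=(a+b)xy\,(z_a\otimes z_b)$, so $T_a\otimes_A T_b\cong T_{a+b}$. Also $T_0=A$. Hence $[T_c]=t^c$, $t^p=1$, and $[T_-]=t^{-1}$. The displayed relation becomes $t+t^{-1}=2$. Multiplying by $t$ gives
\[
 (t-1)^2=0,
\]
that is, $\alpha^2=0$ for $\alpha=t-1$. Then, since $\alpha^2=0$,
\[
 t^c=(1+\alpha)^c=1+c\alpha ,
\]
and the case $c=p$, where $t^p=1$, yields $p\alpha=0$.

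\textbf{Step 3: invariants.} For nonvanishing, apply the trace $\tau_1$ of Lemma~\ref{lem:gap-higher-traces}. It is $\mathbb Z$-linear into $A_2=\kk\,xy$, which has characteristic $p$, and it vanishes on $[A]$. So
\[
 \tau_1(c\alpha)=c\,xy\ne0 \qquad \text{whenever } p\nmid c .
\]
Hence $\alpha\ne0$, and $\alpha$ has exact additive order $p$. For $\rho(\alpha)=0$, note that $\overline{T_+}=\kk$ sits in degree $0$, so $\rho(T_+)=1=\rho(A)$ by Proposition~\ref{prop:rank-character}.

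Next, every element of $\mathbb O_p[A]$ is represented by cells tensored with $H$-modules. These objects lie in $\Pun$ by Lemma~\ref{lem:Pun-property}, so $\tau_1$ kills them. Since $\tau_1(T_+)\ne0$, it follows that $[T_+]\notin\mathbb O_p[A]$.

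Finally, $\dim_qH=1+q^2+\cdots+q^{2p-2}$ is monic with constant term $1$, so $\mathbb O_p$ is a free $\mathbb Z$-module, as in Proposition~\ref{prop:K0-heart-basis}. A free $\mathbb O_p$-module therefore has no $\mathbb Z$-torsion. The nonzero $p$-torsion element $\alpha$ shows that $\K$ is not free over $\mathbb O_p$.
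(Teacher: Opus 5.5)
Your proposal is correct and follows essentially the same route as the paper: the relation $[T_+]+[T_-]=2[A]$ from $F_1\cong T_+\oplus T_-\oplus B_1$, the multiplicativity $T_a\otimes_A T_b\cong T_{a+b}$ giving $(t-1)^2=0$ and then $p\alpha=0$, the trace $\tau_1$ for nonvanishing and for $[T_+]\notin\mathbb O_p[A]$, the rank character for $\rho(\alpha)=0$, and torsion-freeness of $\mathbb O_p$ for non-freeness. The one difference is in the ring structure: you descend a biexact tensor product from the Frobenius category $\Ecat$, whereas the paper shows that finite-cell retracts are homotopically flat. That flatness is also what identifies your product with the derived tensor product named in the statement, so you should add it.
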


\begin{proof}
These are $T_1^+$ and $T_1^-$ from
Section~\ref{sec:gap-examples}.  The explicit four-cell decomposition
\eqref{eq:gap-full-split} makes them compact and cofibrant.  Since
$\partial_A=0$, the trace invariant of
Lemma~\ref{lem:gap-higher-traces} gives
\begin{equation}\label{eq:K0-torsion-trace}
 \tau_1(\alpha)=xy\ne0,\qquad
 \tau_1\bigl(\mathbb O_p[A]\bigr)=0.
\end{equation}
This proves nonvanishing of $\alpha$ and the assertion about the cell
lattice.  Both $T_+$ and $A$ reduce modulo
$A_+=(x,y)$ to the trivial module $\kk$ in degree zero.
The rank character therefore gives $\rho(\alpha)=0$.

At $r=1$, the finite-cell decomposition and its filtrations give
\[
 F_1\cong T_+\oplus T_-\oplus B_1,\qquad
 [F_1]=(q+2+q^{-1})[A],\qquad [B_1]=(q+q^{-1})[A].
\]
Thus \eqref{eq:gap-two-traces} specializes to
\begin{equation}\label{eq:K0-torsion-sum}
 [T_+]+[T_-]=2[A].
\end{equation}
When $p>2$, $B_1$ is not contractible: its cell class has been
subtracted, not discarded.

We justify computing the products below by ordinary tensor product.
Over this ordinary commutative $p$-DG algebra, tensoring a finite-cell
module with an acyclic module gives an acyclic module: the induced
filtration is split over graded $A$-modules and its subquotients are
shifts of the acyclic module.  This property passes to strict
retracts.  Thus $T_+$ and $T_-$ are homotopically flat.
Tensor products of finite-cell modules are finite cell by the same
filtration argument; their retracts have compact tensor products.
Together with Lemma~\ref{lem:finite-cofibrant-model}, this gives the
ring structure on compact $K_0$ from derived tensor product, which can be 
computed from the usual tensor product when one of the tensor factors is cofibrant
\cite[Section~8.2]{QYHopf}. 
The rank-one differentials add, so
\begin{equation}\label{eq:K0-torsion-products}
 T_+\otimes_A^{\mathbf L}T_-\cong T_+\otimes_AT_-\cong A,\qquad
 T_+^{\otimes_A^{\mathbf L}p}\cong  T_+^{\otimes_Ap}\cong A.
\end{equation}
In the second identity the differential is multiplication by
$p\,xy=0$ in $A$.

Write $a=[T_+]$ and $1=[A]$.  Then $[T_-]=a^{-1}$, and
\eqref{eq:K0-torsion-sum} gives $a+a^{-1}=2$.
Multiplication by $a$ gives $(a-1)^2=0$, that is, $\alpha^2=0$.
The second identity in \eqref{eq:K0-torsion-products} gives $a^p=1$.
As an identity in the Grothendieck ring,
\[
 1=(1+\alpha)^p=1+p\alpha,
\]
because every term containing $\alpha^2$ vanishes.  Hence
$p\alpha=0$.  Equation~\eqref{eq:K0-torsion-trace} shows that
$\alpha\ne0$, so its additive order is exactly the prime $p$.
This argument does not impose characteristic $p$ on $K_0$ itself.

Finally, $\mathbb O_p$ is torsion-free as an abelian group: the
polynomial $1+q^2+\cdots+q^{2p-2}$ is monic with constant term one.
Every free $\mathbb O_p$-module is therefore torsion-free as an
abelian group, unlike the compact $K_0$ in this example.
\end{proof}

\begin{rem}
\label{rem:K0-torsion-char2}
For $p=2$, $T_+=T_-=T$ and $B_1$ is contractible.  Already
\eqref{eq:K0-torsion-sum} gives $2([T]-[A])=0$, whereas
$\tau_1([T]-[A])=xy\ne0$.  Thus the torsion is present even though
$\mathbb O_2\cong\mathbb Z[i]$ is torsion-free.  Dividing all
degrees by two gives the half-integer-graded DG example of
Remark~\ref{rem:gap-half}, with exactly the same nonzero class of
order two.

For every prime, Corollary~\ref{cor:K0-split-lattice} still gives
\[
 \K\bigl(\mathcal D^c(A,\partial)\bigr)
 \cong\mathbb O_p[A]\oplus\ker\rho.
\]
Proposition~\ref{prop:K0-p-torsion} exhibits a nonzero element of
order $p$ in $\ker\rho$; it is not a computation of that entire
kernel.  Since $A_1\ne0$, these examples satisfy neither the general
gap \eqref{eq:gap-local} nor the weaker $p$-DG gap
\eqref{eq:gap-pdg-weaker}.  They therefore do not conflict with
either cell-basis theorem, and they show concretely why independence
of the standard projective classes alone does not give generation.
\end{rem}

\bibliographystyle{alphaurl}
\bibliography{references}

%
% ====================================================================

\noindent Y.~Q.: { \sl \small Department of Mathematics, University of Virginia, Charlottesville, VA 22904, USA} \newline \noindent {\tt \small email: yq2dw@virginia.edu}
\end{document}